\newtheorem{theorem}{Theorem}[section]
\newtheorem{lemma}[theorem]{Lemma}
\newtheorem{proposition}[theorem]{Proposition}
\newtheorem{corollary}[theorem]{Corollary}
\newtheorem{definition}[theorem]{Definition}
\newtheorem{remark}[theorem]{Remark}
\newcommand\esssup{\mathop{\rm esssup}}
\newcommand\id{\mathop{\rm id}}
\newcommand\nph{\varphi}
\newcommand\ocl{\mathop{{\rm cl}_{\omega}}} 
\newcommand\ointer{\mathop{{\rm int}_{\omega}}} 
\newcommand\omax{\mathop{\rm OMAX}}
\newcommand\omin{\mathop{\rm OMIN}}
\newcommand\diag{\mathop{\rm diag}}
\newcommand{\cl}[1]{\mathcal{#1}}
\newcommand{\bb}[1]{\mathbb{#1}}
\begin{document}

\title[Operator system structures]{Operator system structures and extensions of Schur multipliers}

\author[Y.-F. Lin]{Ying-Fen Lin}
\address{Mathematical Sciences Research Centre, Queen's University Belfast, Belfast BT7 1NN, United Kingdom}
\email{y.lin@qub.ac.uk}

\author[I. G. Todorov]{Ivan G. Todorov}
\address{Mathematical Sciences Research Centre, Queen's University Belfast, Belfast BT7 1NN, United Kingdom, and 
School of Mathematical Sciences, Nankai University, 300071 Tianjin, China}
\email{i.todorov@qub.ac.uk}

\date{2 December 2018}

\maketitle

\begin{abstract}
For a given C*-algebra $\cl A$, we establish the existence of maximal and minimal operator $\cl A$-system structures on 
an AOU $\cl A$-space. In the case $\cl A$ is a W*-algebra, 
we provide an abstract characterisation of dual operator $\cl A$-systems, 
and study the maximal and minimal dual operator $\cl A$-system structures on 
a dual AOU $\cl A$-space. We introduce operator-valued Schur multipliers, and provide 
a Grothendieck-type characterisation. We study the positive extension problem for 
a partially defined operator-valued Schur multiplier $\nph$ and, 
under some richness conditions, characterise its 
affirmative solution in terms of the equality between the canonical and the maximal 
dual operator $\cl A$-system structures on an operator system naturally associated with the 
domain of $\nph$. 
\end{abstract}



\section{Introduction}\label{s_intro}

The problem of completing a partially defined
matrix to a fully defined positive matrix 
has attracted considerable attention in the literature (see e.g. \cite{dg} and \cite{gjsw} and the references therein).  
Given an $n$ by $n$ matrix,
only a subset of whose entries are specified, this problem asks whether the remaining
entries can be determined so as to yield a positive matrix. 
For block operator matrices, this problem was considered in \cite{pps}, 
where the authors showed that it is closely related to questions about automatic complete positivity 
of certain positive linear maps. More specifically, 
one associates to the pattern $\kappa$ of the partially defined matrix (that is, the set of all given entries) 
the operator system $\cl S(\kappa)$ of all fully specified matrices supported by $\kappa$.
The positive completion problem is then linked to the question of whether the operator-valued 
Schur multiplier with domain $\cl S(\kappa)$ is completely positive. 

A continuous infinite dimensional version of the scalar-valued completion problem was considered in \cite{llt},
where the authors characterised the operator systems possessing the positive completion property 
in terms of an approximation of its positive cone via rank one operators.
The original motivation behind the present paper was
the study of the operator-valued, infinite dimensional and continuous, analogue of the 
positive completion problem. 
We relate the question to the automatic complete positivity 
of operator-valued Schur multipliers; 
in fact, we characterise the extendability of Schur multipliers 
in terms of an equality between operator system structures on an associated 
Archimedean order unit (AOU) *-vector space.

One of the fundamental representation theorems in Operator Space Theory is 
Choi-Effros Theorem \cite[Theorem 13.1]{Pa}, which characterises operator systems (that is, 
unital selfadjoint linear subspaces $\cl S$ of the space $\cl B(H)$ of all bounded linear operators on 
a Hilbert space $H$) abstractly, in terms of properties of the cones of positive elements in the $\cl S$-valued matrix space $M_n(\cl S)$. 
Operator $\cl A$-systems, that is, the operator systems 
which admit a bimodule action by a unital C*-algebra $\cl A$, can be characterised similarly in a way that
takes into account the extra $\cl A$-module structure \cite[Corollary 15.13]{Pa}. 
Dual operator systems -- that is, operator systems that are also dual operator spaces --
were characterised by D. P. Blecher and B. Magajna in \cite{bm}. However, no 
analogous representation of dual operator $\cl A$-systems, where $\cl A$ is a W*-algebra, 
has been known.

The idea of viewing operator spaces as a quantised version of Banach spaces 
has been very fruitful in Functional Analysis \cite{er}. Operator systems can in a similar vein be 
thought of as a quantised version of Archimedean order unit (AOU) *-vector spaces. 
The possible quantisations, or operator system structures, on 
a given AOU space, were first studied in \cite{ptt}, where it was shown 
that every AOU space possesses two extremal operator system structures. 
However, no similar development has been achieved for dual AOU spaces or for 
AOU $\cl A$-spaces. 

In this paper, we unify all aforementioned strands of questions. We provide a Choi-Effros type 
representation theorem for dual operator $\cl A$-systems. We study the operator $\cl A$-system structures 
on a given AOU $\cl A$-space, as well as the dual operator $\cl A$-system structures 
on a given dual AOU $\cl A$-space. The latter results are new even in the case where $\cl A$ coincides with the 
complex field. We introduce infinite dimensional measurable operator-valued Schur multipliers, and provide a characterisation 
that generalises their well-known description by A. Grothendieck \cite{Gro} 
in the scalar case (see also \cite{haag} and \cite{peller}).
Finally, we study the positive extension problem for operator-valued Schur multipliers, 
and characterise the possibility of such an extension by equality of the 
canonical and the maximal dual operator $\cl D$-system structures 
on the domain of the given Schur multiplier. 
Our context is that of an arbitrary (albeit standard) measure space $(X,\mu)$, which includes 
as a sub-case the discrete case and thus the finite case considered in \cite{pps}. 
In this context, the algebra $\cl D$ is the maximal abelian selfadjoint algebra corresponding to $L^{\infty}(X,\mu)$. 
Our results are a far reaching generalisation of the results of V. I. Paulsen, S. Power and R. R. Smith \cite{pps};
in particular, they provide a different view on the positive completion problem for block operator matrices
considered therein. 

The paper is organised as follows. After collecting some preliminaries in Section \ref{s_prel}, we 
establish, in Section \ref{s_eoss}, the existence of the 
minimal and the maximal operator $\cl A$-system structures on a AOU $\cl A$-space $V$, 
$\omin_{\cl A}(V)$ and $\omax_{\cl A}(V)$.
In case $V$ is a C*-algebra, $\omin_{\cl A}(V)$ was essentially defined in \cite{ps}, 
in relation with the problem of automatic complete positivity of $\cl A$-module maps,
whose completely bounded version was first considered by R. R. Smith in \cite{smith}
(see also the subsequent paper \cite{pss}). 
We show that 
$\omax_{\cl A}(V)$ (resp. $\omin_{\cl A}(V)$) is characterised by the automatic complete 
positivity of $\cl A$-bimodule positive maps from $V$ into any operator $\cl A$-system
(resp. from any operator $\cl A$-system into $V$). 

In Section \ref{s_doas}, we provide a characterisation theorem for dual operator $\cl A$-systems and,
in Section \ref{s_deoass}, we define dual AOU $\cl A$-spaces and undertake a development, 
analogous to the one in Section \ref{s_eoss}, for dual operator $\cl A$-system structures. 

In Section \ref{s_ism}, we introduce the operator-valued version of measurable Schur multipliers 
and provide a Grothendieck-type characterisation, noting the special case of positive
Schur multipliers.
In Section \ref{s_pe}, we study partially defined operator-valued Schur multipliers and 
their extension properties to a fully defined positive Schur multiplier. 
Associated with the domain $\kappa\subseteq X\times X$ of the Schur multiplier is an 
operator system $\cl S(\kappa)$. Our analysis depends on the presence of sufficiently many 
operators of finite rank in $\cl S(\kappa)$. 
We note that, of course, this holds true trivially in the classical matrix case. 
Under such richness conditions on the domain $\kappa$, we show that 
the positive extension problem for operator-valued Schur multipliers defined on $\kappa$ 
has an affirmative solution precisely when the canonical operator system structure of 
$\cl S(\kappa)$ coincides with its
maximal dual operator $\cl D$-system structure.

\medskip

We denote by $(\cdot,\cdot)$ the inner product in a Hilbert space, and we use
$\langle \cdot, \cdot\rangle$ to designate duality paring.
We will assume some basic facts and notions from Operator Space Theory, for which we refer 
the reader to the monographs \cite{blm, er, Pa, pisier_intr}.


\section{Preliminaries}\label{s_prel}

In this section we recall basic results and introduce some new notions
that will be needed subsequently. 
If $W$ is a real vector space, a \emph{cone} in $W$ is a non-empty
subset $C \subseteq W$ with the following properties:

\medskip

(a) $\lambda v \in C$ whenever $\lambda \in \bb{R}^+ := [0,\infty)$
and $v \in C$;

(b) $v + w \in C$ whenever $v, w \in C$.

\medskip

\noindent A  \emph{*-vector space} is a complex vector space $V$
together with a map $^* : V \to V$ which is involutive (i.e. $(v^*)^*
= v$ for all $v \in V$) and conjugate linear (i.e. $(\lambda v + \mu w)^*
= \overline{\lambda} v^* + \overline{\mu} w^*$ for all $\lambda,\mu \in \bb C$ and all $v,w \in V$).
If $V$ is a *-vector space, then we let $V_h = \{x \in V :
x^* = x \}$ and call the elements of $V_h$ \emph{hermitian}. 
Note that $V_h$ is a real vector space.

An \emph{ordered *-vector space} \cite{pt} is a pair $(V, V^+)$ consisting of a 
*-vector space $V$ and a subset $V^+ \subseteq V_h$ satisfying
the following properties:

\medskip

(a) $V^+$ is a cone in $V_h$;

(b) $V^+ \cap -V^+ = \{ 0 \}$.

\medskip

Let $(V,V^+)$ be an ordered *-vector space.
We write $v \geq w$ or $w \leq v$ if $v,w\in V_h$ and $v - w \in V^+$.
Note that $v \in V^+$ if and only if $v \geq 0$; for this reason $V^+$ is 
referred to as the cone of \emph{positive} elements of $V$.

An element $e \in V_h$ is called an \emph{order unit} if
for every $v \in V_h$ there exists $r > 0$ such that $v\leq re$.
The order unit $e$ 
is called \emph{Archimedean} if, whenever
$v \in V$ and $re+v \in V^+$ for all $r >0$, we have that $v \in V^+$.
In this case, we call the triple $(V, V^+, e)$ an \emph{Archimedean
order unit *-vector space} (\emph{AOU space} for short).
Note that $(\bb{C}, \bb{R}^+,1)$ is an AOU space in a canonical fashion. 

Let $\cl A$ be a unital C*-algebra. 
Recall that a (complex) vector space $V$ is said to be an \emph{$\cl A$-bimodule} if 
it is equipped with bilinear maps $\cl A\times V\to V$, $(a,x)\to a\cdot x$ and 
$V\times \cl A\to V$, $(x,a)\to x\cdot a$, such that 
$(a\cdot x)\cdot b = a\cdot (x\cdot b)$, $(ab)\cdot x = a\cdot (b\cdot x)$, 
$x\cdot (ab)  = (x\cdot a)\cdot b$ and $1\cdot x = x$ 
for all $x\in V$ and all $a,b\in \cl A$.
If $V$ and $W$ are $\cl A$-bimodules, a linear map $\phi : V\to W$ is called 
an \emph{$\cl A$-bimodule map} if $\phi(a\cdot x\cdot b) = a\cdot \phi(x)\cdot b$, for all 
$x\in V$ and all $a,b\in \cl A$.

\begin{definition}\label{d_Asp}
Let $\cl A$ be a unital C*-algebra. An AOU space $(V,V^+,e)$ 
will be called an \emph{AOU $\cl A$-space} if $V$ is an 
$\cl A$-bimodule and the conditions 
\begin{equation}\label{eq_propm}
(a\cdot x)^* = x^*\cdot a^*, \ \ \ \  x\in V, a\in \cl A,
\end{equation}
\begin{equation}\label{eq_uni}
a\cdot e = e \cdot a, \ \ \ \  a\in \cl A,
\end{equation}
and
\begin{equation}\label{eq_aastar}
a^* \cdot x \cdot a \in V^+, \ \ \ \ x\in V^+, a\in \cl A, 
\end{equation}
are satisfied.
\end{definition}

For a complex vector space $V$, we let 
$M_{m,n}(V)$ denote the complex vector space of all $m$ by $n$ matrices with entries in $V$,
and often use the natural identification $M_{m,n}(V) \equiv M_{m,n} \otimes V$.
We write $A^t$ for the transpose of a matrix $A\in M_{m,n}(V)$.
We set $M_n(V) = M_{n,n}(V)$, $M_{m,n} = M_{m,n}(\bb{C})$ and $M_n = M_n(\bb{C})$; 
we write $I_n$ for the identity matrix in $M_n$. 
If $V$ is an AOU $\cl A$-space, we equip $M_n(V)$ with 
an involution by letting $(x_{i,j})^* = (x_{j,i}^*)$ and set
\begin{equation}\label{eq_matrixmod}
(a_{i,j})\cdot (x_{i,j}) = \left(\sum_{p = 1}^n a_{i,p}\cdot x_{p,j}\right)_{i,j} \ \mbox{ and } \ 
(x_{i,j})\cdot (b_{i,j}) = \left(\sum_{p = 1}^n x_{i,p}\cdot b_{p,j}\right)_{i,j},
\end{equation}
whenever $(x_{i,j})\in M_{m,n}(V)$, $(a_{i,j})\in M_{k,m}(\cl A)$ and 
$(b_{i,j})\in M_{n,l}(\cl A)$, $m,n$, $k,l$ $\in$ $\bb{N}$.

Let $\cl A$ be a unital C*-algebra and $(V,V^+,e)$ be an AOU $\cl A$-space. 
We write $e_n$ for the element of $M_n(V)$ whose diagonal entries coincide with $e$, 
while its off-diagonal entries are equal to zero. 
A family $(P_n)_{n\in \bb{N}}$, where $P_n\subseteq M_n(V)_h$ is a cone with $P_n \cap (-P_n) = \{0\}$, $n\in \bb{N}$, 
will be called a \emph{matrix ordering} of $V$. 
A matrix ordering $(P_n)_{n\in \bb{N}}$ will be called an 
\emph{operator $\cl A$-system structure} on $V$ if 
$P_1 = V^+$,
\begin{equation}\label{eq_diffs}
A^*\cdot X\cdot A \in P_n, \ \ \ \mbox{ whenever } X\in P_m \mbox{ and } A\in M_{m,n}(\cl A),
\end{equation}
and $e_n\in M_n(V)$ is an Archimedean order unit for $P_n$ for every $n\in \bb{N}$. 
Condition (\ref{eq_diffs}) will be referred to as the $\cl A$-compatibility of $(P_n)_{n\in \bb{N}}$. 
The triple $\cl S = (V,(P_n)_{n\in \bb{N}},e)$ is called an \emph{operator $\cl A$-system} (see \cite{Pa});
we write $M_n(\cl S)^+ = P_n$. 
Note that if $\cl B\subseteq \cl A$ is a unital C*-subalgebra, then 
every operator $\cl A$-system is also an operator $\cl B$-system in a canonical fashion. 
Operator $\bb{C}$-systems are called simply \emph{operator systems}.
We note that every operator system has a canonical operator space structure
(see \cite{Pa}). Note that condition (\ref{eq_uni}) is not a part of the standard definition of 
an operator $\cl A$-system; it is however automatically satisfied, as easily follows from Theorem \ref{th_repa} below. 

Let $H$ be a Hilbert space and $\cl B(H)$ be the space of all bounded linear operators on $H$.
We write $\cl B(H)^+$ for the cone of all positive operators in $\cl B(H)$. 
We identify $M_n(\cl B(H))$ with $\cl B(H^n)$, where $H^n$ denotes the 
direct sum of $n$ copies of $H$, and write $M_n(\cl B(H))^+ = \cl B(H^n)^+$, $n\in \bb{N}$.
It is straightforward to see that 
$\cl B(H)$ is an operator system when equipped with the adjoint operation as an involution, the
matrix ordering $(M_n(\cl B(H))^+)_{n\in \bb{N}}$, and the identity operator $I$ as an Archimedean matrix order unit.

Given AOU spaces $(V, V^+,e)$ and $(W, W^+,f)$, a linear map 
$\phi : V\to W$ is called \emph{unital} if $\phi(e) = f$, and \emph{positive} if 
$\phi(V^+)\subseteq W^+$. 
A linear map $s : V\to \bb{C}$ is called a \emph{state} on $V$ if $s$ is unital and positive.

Let $\cl S$ and $\cl T$ be operator systems with units $e$ and $f$, 
respectively.
For a linear map $\phi : \cl S\to \cl T$, 
we let $\phi^{(n,m)} : M_{n,m}(\cl S) \to M_{n,m}(\cl T)$ be the (linear) map given by 
$\phi^{(n,m)}((x_{i,j})_{i,j}) = (\phi(x_{i,j}))_{i,j}$, and set $\phi^{(n)} = \phi^{(n,n)}$. 
The map $\phi$ is called \emph{$n$-positive} if $\phi^{(n)}$ is positive, and 
it is called \emph{completely positive} if it is $n$-positive for all $n\in \bb{N}$. 
A bijective completely positive map $\phi : \cl S\to \cl T$ is called a 
\emph{complete order isomorphism} if its inverse $\phi^{-1}$ is completely positive. 
In this case, we call $\cl S$ and $\cl T$ are completely order isomorphic;
if $\phi$ is moreover unital, we say that $\cl S$ and $\cl T$ are unitally completely order isomorphic.
Further, $\phi$ is called a \emph{complete isometry} if $\phi^{(n)}$ is an isometry 
for each $n\in \bb{N}$. We note that a unital surjective map $\phi : \cl S\to \cl T$ is a complete isometry if and only if it is
a complete order isomorphism \cite[1.3.3]{blm}.

We refer the reader to \cite{Pa} for the general theory of operator systems and operator spaces, 
and in particular for the definition and basic properties of completely bounded maps.
The following characterisation, extending the well-known Choi-Effros representation theorem for 
operator systems \cite[Theorem 13.1]{Pa}, was established in \cite[Corollary 15.12]{Pa}.

\begin{theorem}\label{th_repa}
Let $\cl A$ be a unital C*-algebra and $\cl S$ be an operator system. 
The following are equivalent: 

(i) \ $\cl S$ is unitally completely order isomorphic to an operator $\cl A$-system;

(ii) there exist a Hilbert space $H$, a unital complete isometry $\gamma : \cl S\to \cl B(H)$
and a unital *-homomorphism $\pi : \cl A\to \cl B(H)$ such that 
$\gamma(a\cdot x) = \pi(a)\gamma(x)$ for all $x\in \cl S$ and all $a\in \cl A$. 
\end{theorem}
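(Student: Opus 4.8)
The plan is to establish the two implications separately, the substance lying entirely in $(i)\Rightarrow(ii)$. For $(ii)\Rightarrow(i)$, I would argue directly from the data $\gamma,\pi$. Being a unital complete isometry onto its range, $\gamma$ is a unital complete order isomorphism onto $\gamma(\cl S)$ by \cite[1.3.3]{blm}, and in particular $*$-preserving; combined with (\ref{eq_propm}) the identity $\gamma(a\cdot x)=\pi(a)\gamma(x)$ then forces $\gamma(x\cdot a)=\gamma(x)\pi(a)$, so $\gamma(\cl S)\subseteq\cl B(H)$ is invariant under the bimodule action $(a,T,b)\mapsto\pi(a)T\pi(b)$ and inherits, via $\gamma$, the $\cl A$-bimodule structure of $\cl S$. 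Since $\pi$ is a $*$-homomorphism, a matrix $A\in M_{m,n}(\cl A)$ acts through the positivity-preserving conjugation $X\mapsto[\pi(a_{ij})]^*X[\pi(a_{ij})]$ on $\cl B(H^m)$, so the concrete matrix cones of $\gamma(\cl S)$ verify the $\cl A$-compatibility condition (\ref{eq_diffs}). Thus $\gamma(\cl S)$ is an operator $\cl A$-system unitally completely order isomorphic to $\cl S$, giving $(i)$.

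For $(i)\Rightarrow(ii)$, I may assume, after transporting the structure along the given unital complete order isomorphism, that $\cl S$ is itself an operator $\cl A$-system with matrix cones $(P_n)_n$. The idea is to manufacture from each matrix state an $\cl A$-covariant dilation and then to assemble a separating family. Given a unital completely positive map $\phi:\cl S\to\cl B(H_0)$, I would first observe that $\rho:\cl A\to\cl S$, $\rho(c)=c\cdot e$, is completely positive: if $c=d^*d$ then $\rho(c)=d^*\cdot(d\cdot e)=d^*\cdot e\cdot d\in V^+$ by (\ref{eq_uni}) and (\ref{eq_aastar}), and the matricial version follows by applying (\ref{eq_diffs}) to the Archimedean order unit $e_n$. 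Hence $\Psi:=\phi\circ\rho:\cl A\to\cl B(H_0)$ is completely positive, so the sesquilinear form on the algebraic tensor product $\cl A\odot H_0$ given by
\[
(a\otimes\xi,\,b\otimes\eta)=\big(\phi(b^*\cdot e\cdot a)\xi,\eta\big)=\big(\Psi(b^*a)\xi,\eta\big)
\]
is positive semidefinite. Letting $H$ be its separation-completion, the formula $\pi(c)(a\otimes\xi)=(ca)\otimes\xi$ defines a unital $*$-representation of $\cl A$ on $H$, while $\gamma:\cl S\to\cl B(H)$ determined by $\big(\gamma(x)(a\otimes\xi),b\otimes\eta\big)=\big(\phi(b^*\cdot x\cdot a)\xi,\eta\big)$ is a unital map satisfying $\gamma(c\cdot x)=\pi(c)\gamma(x)$ and dilating $\phi$ upon compression to $1\otimes H_0$; the bimodule axioms make the intertwining and unitality identities routine.

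To finish, I would take the family $\cl F$ of all unital completely positive maps $\phi:\cl S\to M_k$, $k\in\bb N$; by the Choi--Effros theorem \cite[Theorem 13.1]{Pa} one has $X\in M_n(\cl S)^+$ if and only if $\phi^{(n)}(X)\geq 0$ for every $\phi\in\cl F$. Dilating each $\phi$ to a covariant pair $(\gamma_\phi,\pi_\phi)$ and forming the direct sums $\gamma=\bigoplus_\phi\gamma_\phi$ and $\pi=\bigoplus_\phi\pi_\phi$ on $H=\bigoplus_\phi H_\phi$, I obtain a unital $*$-homomorphism $\pi$ and a unital map $\gamma$ with $\gamma(a\cdot x)=\pi(a)\gamma(x)$. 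Complete positivity of $\gamma$ follows from (\ref{eq_diffs}) and the complete positivity of each $\phi$, and the converse---that $\gamma^{(n)}(X)\geq 0$ forces $X\in P_n$---holds because each $\phi^{(n)}(X)$ is a compression of $\gamma_\phi^{(n)}(X)$. Hence $\gamma$ is a unital complete order embedding, therefore a unital complete isometry, as required.

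I expect the main obstacle to be the covariant dilation step: verifying that the induced form is positive semidefinite and that $\gamma(x)$ is well defined and bounded on $H$ (a Stinespring-type estimate at all matrix levels), since it is here that the interplay of the bimodule axioms, the compatibility condition (\ref{eq_diffs}), and the Archimedean unit $e_n$ must be exploited. Once covariance of a single dilation is secured, the passage to a separating family and the verification of the order-embedding property are comparatively formal.
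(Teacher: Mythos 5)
Your proof is correct, but it is worth pointing out that the paper itself contains no proof of Theorem \ref{th_repa}: the result is imported wholesale from \cite[Corollary 15.12]{Pa}, where the argument runs through the injective envelope $I(\cl S)$ --- one endows $I(\cl S)$ with its Choi--Effros C*-algebra structure, shows by a Schwarz-inequality/multiplicative-domain argument that $a \mapsto a\cdot e$ is a unital *-homomorphism of $\cl A$ into $I(\cl S)$ implementing the module action, and then represents $I(\cl S)$ faithfully. Your route is genuinely different and more self-contained: a covariant Stinespring/GNS dilation of each matrix state $\phi : \cl S \to M_k$, assembled over the separating family of such states furnished by the proof of the Choi--Effros theorem. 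The step you flag as the main obstacle does go through exactly as you predict: if $\|x\| \le 1$ then $\left(\begin{smallmatrix} e & x^* \\ x & e \end{smallmatrix}\right) \in M_2(\cl S)^+$ by the Archimedean property, and conjugating by the block matrix with rows $A = [a_1, \dots, a_n]$ and $B = [b_1,\dots,b_m]$ over $\cl A$, using (\ref{eq_diffs}) and then applying $\phi^{(n+m)}$, yields the Cauchy--Schwarz bound $|(\gamma_\phi(x)u,v)| \le \|u\|\,\|v\|$ for $u,v$ in the algebraic domain; note that this is essentially the same device (a sesquilinear form bounded via a module Cauchy--Schwarz inequality) that the paper itself uses later for the operator $\Phi(y)$ in the proof of Theorem \ref{th_repdoas}, except that there the estimate (Lemma \ref{l_funct}) is derived \emph{from} Theorem \ref{th_repa}, whereas your abstract derivation avoids that circularity. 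As to what each approach buys: Paulsen's is very short once the injective-envelope machinery is in place, and it delivers for free the two-sided covariance $\gamma(a\cdot x\cdot b)=\pi(a)\gamma(x)\pi(b)$ and the identity (\ref{eq_uni}); yours needs only the Choi--Effros separation of matrix cones plus GNS, and its single-state dilation is precisely the pattern that generalises to the weak* setting of Section \ref{s_doas}. One small repair in your $(ii)\Rightarrow(i)$: to invoke \cite[1.3.3]{blm} you must already know that $\gamma(\cl S)$ is selfadjoint, so you should first observe that a unital complete contraction into $\cl B(H)$ is completely positive, hence *-linear; with that in hand, your derivation of right invariance from (\ref{eq_propm}) and of the $\cl A$-compatibility of the concrete cones of $\gamma(\cl S)$ is fine.
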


We note that, if $\cl A$ is a unital C*-algebra and 
$\cl S$ is an operator system that is also an operator $\cl A$-bimodule
satisfying (\ref{eq_propm}), then $\cl S$ is an operator $\cl A$-system precisely when
the family $(M_n(\cl S)^+)_{n\in \bb{N}}$ is $\cl A$-compatible.


\section{The extremal operator $\cl A$-system structures}\label{s_eoss}

In this section, we show that any AOU $\cl A$-space can be equipped with two extremal 
operator $\cl A$-system structures, and establish their universal properties. 
We first consider the minimal operator $\cl A$-system structure. Note that, in the case where 
the AOU $\cl A$-space is a C*-algebra containing $\cl A$, 
this operator system structure was first defined and studied in \cite{ps}. 

Let $\cl A$ be a unital C*-algebra and $(V,V^+,e)$ be an AOU $\cl A$-space. 
For $n\in \bb{N}$, let 
$$C_n^{\min}(V;\cl A) = \{X\in M_n(V)_h : C^*\cdot X\cdot C\in V^+, \mbox{ for all } C\in M_{n,1}(\cl A)\}.$$

\begin{remark}\label{r_scopmin}
{\rm Suppose that $(V,V^+,e)$ is an AOU $\cl A$-space and that $\cl B$ is a unital C*-subalgebra 
of $\cl A$. Then $(V,V^+,e)$ is also an AOU $\cl B$-space in the natural fashion. 
Clearly, $C_n^{\min}(V;\cl A) \subseteq C_n^{\min}(V;\cl B)$.
In particular, $C_n^{\min}(V;\cl A)$ is contained in 
$C_n^{\min}(V;\bb{C})$; note that the latter set coincides with the cone $C_n^{\min}(V)$ 
introduced in \cite[Definition 3.1]{ptt}.
}
\end{remark}

\begin{theorem}\label{th_cmina}
Let $\cl A$ be a unital C*-algebra and $(V,V^+,e)$ be an AOU $\cl A$-space.
Then $(C_n^{\min}(V;\cl A))_{n\in \bb{N}}$ is an operator $\cl A$-system structure on $V$. 
Moreover, if $(P_n)_{n\in \bb{N}}$ is an operator $\cl A$-system structure on $V$ then 
$P_n \subseteq C_n^{\min}(V;\cl A)$ for each $n\in \bb{N}$.
\end{theorem}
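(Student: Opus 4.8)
The plan is to verify the three defining conditions of an operator $\cl A$-system structure for the family $(C_n^{\min}(V;\cl A))_{n\in \bb{N}}$, and then to establish the maximality-from-above (i.e. that $C_n^{\min}$ is the \emph{smallest} such structure) by a direct inclusion argument. First I would check that each $C_n^{\min}(V;\cl A)$ is a cone with $C_n^{\min}\cap(-C_n^{\min})=\{0\}$: the cone axioms are immediate from the definition, since $C^*\cdot(\lambda X + Y)\cdot C = \lambda(C^*\cdot X\cdot C) + C^*\cdot Y\cdot C$ and $V^+$ is a cone. For the spanning/pointedness property $C_n^{\min}\cap(-C_n^{\min})=\{0\}$, if $X$ and $-X$ both lie in $C_n^{\min}$, then $C^*\cdot X\cdot C\in V^+\cap(-V^+)=\{0\}$ for every $C\in M_{n,1}(\cl A)$; taking $C$ to run over the standard matrix units $E_{i,1}\otimes 1$ forces each diagonal entry $x_{i,i}=0$, and then taking $C=(E_{i,1}+E_{j,1})\otimes 1$ together with $C=(E_{i,1}+iE_{j,1})\otimes 1$ recovers the off-diagonal entries, giving $X=0$. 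The identity $P_1 = C_1^{\min}(V;\cl A)=V^+$ is clear, since for $n=1$ and $C=a\in\cl A$ the condition $a^*\cdot x\cdot a\in V^+$ holds for all $x\in V^+$ by \eqref{eq_aastar}, while taking $C=1$ shows $C_1^{\min}\subseteq V^+$.

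Next I would verify the $\cl A$-compatibility \eqref{eq_diffs}. Given $X\in C_m^{\min}(V;\cl A)$ and $A\in M_{m,n}(\cl A)$, I must show $A^*\cdot X\cdot A\in C_n^{\min}(V;\cl A)$, i.e. that $C^*\cdot(A^*\cdot X\cdot A)\cdot C\in V^+$ for every $C\in M_{n,1}(\cl A)$. The key observation is the associativity of the module actions, which gives $C^*\cdot(A^*\cdot X\cdot A)\cdot C = (AC)^*\cdot X\cdot (AC)$, where $AC\in M_{m,1}(\cl A)$; since $X\in C_m^{\min}$ this last expression lies in $V^+$ by definition. This reduces compatibility to the matrix-module associativity recorded in \eqref{eq_matrixmod} together with \eqref{eq_propm}, which should be a routine check.

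The Archimedean order unit condition is where I expect the main work to lie. I would first check that $e_n$ is an order unit for $C_n^{\min}(V;\cl A)$: given any $X\in M_n(V)_h$, I need $r>0$ with $re_n - X\in C_n^{\min}$, i.e. $C^*\cdot(re_n-X)\cdot C\in V^+$ for all $C\in M_{n,1}(\cl A)$. Here the difficulty is that $r$ must be chosen uniformly over the infinitely many columns $C$, so a naive entrywise estimate does not suffice. The natural route is to reduce to the scalar case already handled in \cite[Definition 3.1]{ptt}: by Remark \ref{r_scopmin} we have $C_n^{\min}(V;\cl A)\subseteq C_n^{\min}(V;\bb{C})$, and I would try to show that $e_n$ being an Archimedean order unit for the larger cone $C_n^{\min}(V;\bb{C})$ descends appropriately, combined with a norm control of the form $C^*\cdot X\cdot C \leq \|C\|^2 r e$ coming from an order-unit estimate $\pm X \leq r' e_n$ in the scalar sense. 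The Archimedean property itself --- if $re_n + X\in C_n^{\min}$ for all $r>0$ then $X\in C_n^{\min}$ --- I expect to follow by interchanging the two quantifiers: for fixed $C$, the hypothesis gives $re + C^*\cdot X\cdot C = C^*\cdot(re_n + X)\cdot C \in V^+$ for all $r>0$ after absorbing $\|C\|^2$ into $r$, and then the Archimedean property of the order unit $e$ in $V$ itself yields $C^*\cdot X\cdot C\in V^+$; since $C$ was arbitrary, $X\in C_n^{\min}$.

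Finally, for the minimality statement, suppose $(P_n)_{n\in\bb{N}}$ is any operator $\cl A$-system structure on $V$ and let $X\in P_n$. For any $C\in M_{n,1}(\cl A)$, compatibility \eqref{eq_diffs} applied with $A=C\in M_{n,1}(\cl A)$ gives $C^*\cdot X\cdot C\in P_1 = V^+$; since this holds for every column $C$, we conclude $X\in C_n^{\min}(V;\cl A)$ by definition, whence $P_n\subseteq C_n^{\min}(V;\cl A)$. I expect this direction to be short; the genuine obstacle throughout is the \emph{uniform} choice of the constant $r$ in the order-unit estimate, which is exactly the point where one must lean on the scalar AOU structure of $V$ and its interaction with the C*-norm on $\cl A$ rather than argue entry by entry.
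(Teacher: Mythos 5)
The one genuine gap is in your verification that $e_n$ is an order unit for $C_n^{\min}(V;\cl A)$ — precisely the step you flagged as the main work. Your proposed reduction uses the inclusion $C_n^{\min}(V;\cl A)\subseteq C_n^{\min}(V;\bb{C})$ from Remark \ref{r_scopmin} and tries to make the order-unit property of the larger scalar cone ``descend''; but order-unit-ness passes from smaller cones to larger ones, not the other way. Knowing $re_n - X\in C_n^{\min}(V;\bb{C})$ gives no information about membership in the \emph{smaller} cone $C_n^{\min}(V;\cl A)$: the scalar condition only controls compressions $\lambda^*\cdot X\cdot\lambda$ by scalar columns $\lambda\in M_{n,1}(\bb{C})$, whereas what is required is $C^*\cdot(re_n - X)\cdot C = rC^*C\cdot e - C^*\cdot X\cdot C\in V^+$ for every $\cl A$-valued column $C$. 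The norm control does not rescue this: since $C^*C\cdot e\leq\|C\|^2 e$ in $V$, the estimate you would extract, $C^*\cdot X\cdot C\leq r\|C\|^2 e$, is strictly \emph{weaker} than the required $C^*\cdot X\cdot C\leq rC^*C\cdot e$, and one cannot subtract positive elements to bridge the difference. (Contrast this with your Archimedean argument, where the same comparison is used \emph{additively} — adding the positive element $r(\|C\|^2 1 - C^*C)\cdot e$ — which is fine once you record that $a\cdot e = a^{1/2}\cdot e\cdot a^{1/2}\in V^+$ for every $a\in\cl A^+$, by (\ref{eq_uni}) and (\ref{eq_aastar}).)

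The paper fills this step by going in the opposite direction: it exhibits a cone \emph{contained in} $C_n^{\min}(V;\cl A)$ for which $e_n$ is an order unit, whence $e_n$ is an order unit for the larger cone. Concretely, $D_n^{\max}(V;\cl A)\subseteq C_n^{\min}(V;\cl A)$ by Lemma \ref{min} (this needs only the $\cl A$-compatibility of the family $(C_n^{\min}(V;\cl A))_n$ and $C_1^{\min}(V;\cl A)=V^+$, both of which you have already established), and $e_n$ is an order unit for $D_n^{\max}(V;\cl A)$ by Proposition \ref{minmax} (ii), which rests on the scalar result \cite[Proposition 3.10]{ptt} via $D_n^{\max}(V;\bb{C}1)\subseteq D_n^{\max}(V;\cl A)$; this is why the paper defers the order-unit claim to the proof of Theorem \ref{th_cmaxa}. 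In short, the scalar reduction must pass through the maximal scalar cone sitting \emph{inside} $C_n^{\min}(V;\cl A)$, not the minimal scalar cone sitting outside it. The rest of your proposal is sound: the compatibility computation $C^*\cdot(A^*\cdot X\cdot A)\cdot C = (AC)^*\cdot X\cdot(AC)$ and the minimality argument coincide with the paper's; your matrix-unit argument for $C_n^{\min}\cap(-C_n^{\min})=\{0\}$ is a self-contained alternative to the paper's citation of \cite[Theorem 3.2]{ptt}; and your Archimedean argument, once the absorption step is justified as above, is a legitimate alternative to the paper's conjugation by $(C^*C+\epsilon 1)^{-1/2}$.
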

\begin{proof}
Since $V^+$ is a cone, $C_n^{\min}(V;\cl A)$ is a cone, too. 
As a consequence of \cite[Theorem 3.2]{ptt} and Remark \ref{r_scopmin}, 
$C_n^{\min}(V;\cl A)\cap (-C_n^{\min}(V;\cl A)) = \{0\}$.
If $X\in C_m^{\min}(V;\cl A)$, $A\in M_{m,n}(\cl A)$ and $C\in M_{n,1}(\cl A)$
then $AC\in M_{m,1}(\cl A)$ and hence 
$$C^*\cdot (A^*\cdot X\cdot A)\cdot C = (AC)^*\cdot X \cdot (AC) \in V^+,$$
showing that $A^*\cdot X\cdot A\in C_n^{\min}(V;\cl A)$.
Thus, the family $(C_n^{\min}(V;\cl A))_{n\in \bb{N}}$ is $\cl A$-compatible.

Suppose that $(P_n)_{n\in \bb{N}}$ is an operator $\cl A$-system structure on $V$. 
If $X\in P_n$ then, by $\cl A$-compatibility, $C^*\cdot X\cdot C\in P_1 = V^+$,
and hence $X\in C_n^{\min}(V;\cl A)$. Thus, $P_n\subseteq C_n^{\min}(V;\cl A)$. 
It will follow from the proof of Theorem \ref{th_cmaxa} below that $e_n$ is an order unit for $C_n^{\min}(V;\cl A)$.
To see that $e_n$ is Archimedean, suppose that $X + re_n\in C_n^{\min}(V;\cl A)$ 
for every $r > 0$. Let $C\in M_{n,1}(\cl A)$. Using (\ref{eq_uni}), we have 
$$C^*\cdot X\cdot C + r C^*C  \cdot e = C^*\cdot (X + r e_n)\cdot C \in V^+, \ \mbox{ for all } r > 0.$$
Let $\epsilon > 0$ and $T = (C^*C + \epsilon 1)^{-1/2}\in \cl A$. 
We have that 
$$C^*\cdot X\cdot C + r C^*C \cdot e + r\epsilon e \in V^+, \ \mbox{ for all } r > 0$$
and hence, by (\ref{eq_uni}) and (\ref{eq_aastar}), 
$$T(C^*\cdot X\cdot C)T + r e \in V^+, \ \mbox{ for all } r > 0.$$
Since $e$ is Archimedean for $V^+$, we have that 
$T(C^*\cdot X\cdot C)T \in V^+$. Applying (\ref{eq_aastar}) again, 
we conclude that 
$$C^*\cdot X\cdot C = T^{-1}(T(C^*\cdot X\cdot C)T)T^{-1} \in V^+;$$
thus $X\in C_n^{\min}(V;\cl A)$ and
the proof is complete.
\end{proof}

We call $(C_n^{\min}(V;\cl A))_{n\in \bb{N}}$ the \emph{minimal operator $\cl A$-system structure} on $V$, and let
$$\omin\mbox{}_{\cl A}(V) = \left(V, (C_n^{\min}(V;\cl A))_{n\in \bb{N}}, e\right).$$
The following theorem describes its universal property. Part (i) below 
was established in \cite{ps} in the case $V$ is a C*-algebra containing $\cl A$.

\begin{theorem}\label{th_umin}
Let $\cl A$ be a unital C*-algebra and $(V,V^+,e)$ be an AOU $\cl A$-space. 

(i) Suppose that $\cl S$ is an operator $\cl A$-system 
and $\phi : \cl S\to V$ is a positive $\cl A$-bimodule map. 
Then $\phi$ is completely positive as a map from $\cl S$ into $\omin\mbox{}_{\cl A}(V)$.

(ii) If $\cl T$ is an operator $\cl A$-system with underlying space $V$ and positive cone $V^+$, 
such that for every operator $\cl A$-system $\cl S$,
every positive $\cl A$-bimodule map $\phi : \cl S\to \cl T$ is completely positive, 
then there exists a unital $\cl A$-bimodule map $\psi : \cl T \to \omin_{\cl A}(V)$ that is a complete order isomorphism.
\end{theorem}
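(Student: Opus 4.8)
The plan is to treat part (i) by a direct matricial computation, and then to deduce part (ii) by applying part (i) together with the defining hypothesis on $\cl T$, in both cases to the identity map on $V$.

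For part (i), I would fix $n\in\bb{N}$ and an element $X=(x_{i,j})\in M_n(\cl S)^+$, and show that $\phi^{(n)}(X)\in C_n^{\min}(V;\cl A)$. Given any column $C=(c_j)_{j=1}^n\in M_{n,1}(\cl A)$, the matricial module action (\ref{eq_matrixmod}) and the $\cl A$-bimodule property of $\phi$ yield the identity
\[
C^*\cdot \phi^{(n)}(X)\cdot C=\sum_{i,j=1}^n c_i^*\cdot\phi(x_{i,j})\cdot c_j=\phi\Bigl(\sum_{i,j=1}^n c_i^*\cdot x_{i,j}\cdot c_j\Bigr)=\phi(C^*\cdot X\cdot C).
\]
Since $X\in M_n(\cl S)^+$ and $C\in M_{n,1}(\cl A)$, the $\cl A$-compatibility (\ref{eq_diffs}) of $\cl S$ places $C^*\cdot X\cdot C$ in the level-one cone $M_1(\cl S)^+$, whence positivity of $\phi$ gives $\phi(C^*\cdot X\cdot C)\in V^+$. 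As $C$ was arbitrary and $\phi^{(n)}(X)$ is hermitian, this is exactly the assertion $\phi^{(n)}(X)\in C_n^{\min}(V;\cl A)$, so $\phi$ is completely positive into $\omin_{\cl A}(V)$.

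For part (ii), I would take $\psi$ to be the identity map $\id$ on $V$, viewed as a map from $\cl T$ to $\omin_{\cl A}(V)$; it is plainly unital and an $\cl A$-bimodule map, so it remains only to check that it is a complete order isomorphism. One inclusion is immediate from Theorem \ref{th_cmina} (or from part (i) applied to the positive $\cl A$-bimodule map $\id:\cl T\to V$): namely $M_n(\cl T)^+\subseteq C_n^{\min}(V;\cl A)$ for all $n$, so $\psi$ is completely positive. For the reverse inclusion I would invoke the hypothesis on $\cl T$, applied to the operator $\cl A$-system $\cl S=\omin_{\cl A}(V)$ and to the positive $\cl A$-bimodule map $\id:\omin_{\cl A}(V)\to\cl T$ (positive because the two level-one cones both equal $V^+$); by assumption this map is completely positive, giving $C_n^{\min}(V;\cl A)\subseteq M_n(\cl T)^+$ for all $n$. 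Combining the two inclusions yields $M_n(\cl T)^+=C_n^{\min}(V;\cl A)$ for every $n$, so $\psi=\id$ is a unital complete order isomorphism from $\cl T$ onto $\omin_{\cl A}(V)$.

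The computations are short, so the main point to get right is the matrix-level module bookkeeping in part (i): one must use (\ref{eq_matrixmod}) to expand $C^*\cdot\phi^{(n)}(X)\cdot C$ as the scalar sum above and then the bimodule identity $\phi(a\cdot x\cdot b)=a\cdot\phi(x)\cdot b$ to pull $\phi$ outside. I also expect to need the routine observation that a positive map between AOU spaces is self-adjoint, in order to conclude that $\phi^{(n)}(X)$ lies in $M_n(V)_h$; this, together with the single identity displayed above, is really all that part (i) requires, and part (ii) is then a formal consequence.
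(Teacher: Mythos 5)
Your proposal is correct and follows essentially the same route as the paper: part (i) is the identical computation $C^*\cdot\phi^{(n)}(X)\cdot C=\phi(C^*\cdot X\cdot C)\in V^+$ using $\cl A$-compatibility of $\cl S$ and bimodularity of $\phi$, and part (ii) takes $\psi=\id$, obtaining one direction from the universal hypothesis on $\cl T$ applied to $\id:\omin_{\cl A}(V)\to\cl T$ and the other from part (i) applied to $\id:\cl T\to V$. The only addition is your explicit remark on hermiticity of $\phi^{(n)}(X)$, which the paper leaves implicit.
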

\begin{proof}
(i) 
Let $\cl S$ be an operator $\cl A$-system 
and $\phi : \cl S\to V$ be a positive $\cl A$-bimodule map.
Suppose that $X = (x_{i,j})\in M_n(\cl S)^+$ and $C = (a_i)_{i=1}^n\in M_{n,1}(\cl A)$. 
Then $C^*\cdot X \cdot C\in \cl S^+$; 
since $\phi$ is a positive $\cl A$-bimodule map, we have
\begin{eqnarray*}
C^*\cdot \phi^{(n)}(X)\cdot C & = & 
\sum_{i,j=1}^n a_i^* \cdot \phi(x_{i,j})\cdot a_j
= \phi\left(\sum_{i,j=1}^n a_i^* \cdot x_{i,j}\cdot a_j\right)\\
& = & 
\phi(C^*\cdot X \cdot C)\in V^+.
\end{eqnarray*}
Thus, $\phi^{(n)}$ maps $M_n(\cl S)^+$ into $C_n^{\min}(V;\cl A)$ and hence $\phi$ is completely positive.

(ii) Suppose that the operator $\cl A$-system $\cl T$ satisfies the properties in (ii). 
Since the identity $\id : \omin_{\cl A}(V)\to V$ is a positive $\cl A$-bimodule map, 
we have that $\id : \omin_{\cl A}(V) \to \cl T$ is completely positive. 
On the other hand, the identity $\id : \cl T\to V$ is also positive and $\cl A$-bimodular. 
By (i), $\id : \cl T \to \omin_{\cl A}(V)$ is completely positive, and we can take $\psi = \id$. 
\end{proof}

\medskip

We next consider the maximal operator $\cl A$-system structure. 
For $n\in \bb{N}$, set 
$$D_n^{\max}(V;\cl A) = \left\{\sum_{i=1}^k A_i^*\cdot x_i \cdot A_i : k\in \bb{N}, x_i\in V^+, 
A_i\in M_{1,n}(\cl A)\right\}$$
and let $\cl D^{\max}(V;\cl A) = (D_n^{\max}(V;\cl A))_{n\in \bb{N}}$.

\begin{remark}\label{r_scop}
{\rm 
Suppose that $(V,V^+,e)$ is an AOU $\cl A$-space and that $\cl B$ is a unital C*-subalgebra of $\cl A$. 
Clearly, $D_n^{\max}(V;\cl B) \subseteq D_n^{\max}(V;\cl A)$.
Given any AOU space $(V,V^+,e)$, in \cite{ptt} the authors defined 
$$D_n^{\max}(V) = \left\{\sum_{i=1}^k B_i \otimes x_i : k\in \bb{N}, x_i\in V^+, B_i\in M_{n}^+\right\}.$$ 
Since every matrix $B\in M_n^+$ is the sum of matrices of the form $A^*A$, where $A\in M_{1,n}$, 
we have that $D_n^{\max}(V) = D_n^{\max}(V;\bb{C}1)$.}
\end{remark}

\begin{lemma}\label{min}
Let $\cl A$ be a unital C*-algebra and $(V,V^+,e)$ be an AOU $\cl A$-space.
Let $P_n\subseteq M_n(V)_h$ be a cone, $n\in\bb{N}$, such that the family
$(P_n)_{n=1}^{\infty}$ is $\cl A$-compatible and $P_1 = V^+$.
Then $D_n^{\max}(V;\cl A)\subseteq P_n$, for each $n\in \bb{N}$.
\end{lemma}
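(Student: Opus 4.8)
The goal is to show that any $\cl A$-compatible family of cones $(P_n)$ with $P_1 = V^+$ must contain $D_n^{\max}(V;\cl A)$. The plan is to verify directly that each generating element of $D_n^{\max}(V;\cl A)$ already lies in $P_n$, and then invoke the fact that $P_n$ is a cone (closed under addition and positive scalar multiplication) to conclude that the whole set $D_n^{\max}(V;\cl A)$ is contained in $P_n$.

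First I would fix $n\in\bb{N}$ and take an arbitrary generator of $D_n^{\max}(V;\cl A)$, namely an element of the form $A^*\cdot x\cdot A$, where $x\in V^+$ and $A\in M_{1,n}(\cl A)$. The key observation is that $x\in V^+ = P_1$, so $x$ is a positive element at the first matrix level. I would then view $x$ as sitting in $P_1 = P_m$ with $m=1$, and observe that $A\in M_{1,n}(\cl A) = M_{m,n}(\cl A)$. The $\cl A$-compatibility condition (\ref{eq_diffs}) says precisely that $A^*\cdot X\cdot A\in P_n$ whenever $X\in P_m$ and $A\in M_{m,n}(\cl A)$; applying this with $m=1$ and $X=x$ gives $A^*\cdot x\cdot A\in P_n$. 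Thus every single summand appearing in the definition of $D_n^{\max}(V;\cl A)$ belongs to $P_n$.

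Finally, a general element of $D_n^{\max}(V;\cl A)$ has the form $\sum_{i=1}^k A_i^*\cdot x_i\cdot A_i$ with $x_i\in V^+$ and $A_i\in M_{1,n}(\cl A)$. By the previous step each term $A_i^*\cdot x_i\cdot A_i$ lies in $P_n$, and since $P_n$ is a cone it is closed under finite sums, so the whole sum lies in $P_n$. This establishes $D_n^{\max}(V;\cl A)\subseteq P_n$ for every $n$, as required.

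\emph{Main obstacle.} I expect this proof to be essentially routine, as the containment follows almost immediately by matching the definition of the generators of $D_n^{\max}(V;\cl A)$ with the $\cl A$-compatibility axiom at the level $m=1$. The only point requiring a little care is the bookkeeping of matrix dimensions: one must recognise that the single positive element $x\in V^+$ is exactly an element of $M_1(V)_h$ lying in $P_1$, so that the compression $A^*\cdot x\cdot A$ is a legitimate instance of condition (\ref{eq_diffs}) rather than requiring any higher-level positivity. No Archimedean or order-unit properties are needed for this lemma, since we are only proving a containment of cones.
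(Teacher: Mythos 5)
Your proof is correct and matches the paper's argument exactly: apply the $\cl A$-compatibility condition at level $m=1$ (using $P_1 = V^+$) to get each generator $A^*\cdot x\cdot A$ into $P_n$, then use the cone property of $P_n$ to handle finite sums. The paper's proof is a two-line compression of precisely this reasoning.
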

\begin{proof}
Let $n\in \bb{N}$.
If $A\in M_{1,n}(\cl A)$ then
$$A^* \cdot V^+ \cdot A = A^* \cdot P_1 \cdot A \subseteq P_n.$$ 
Thus $D_n^{\max}(V; \cl A) \subseteq P_n$.
\end{proof}

If $x_1,\dots,x_n\in V$ we let
$\diag(x_1,\dots,x_n)$
denote the element of $M_n(V)$ with $x_1,\dots,x_n$ on its diagonal
(in this order) and zeros elsewhere.

\begin{proposition}\label{minmax}
Let $\cl A$ be a unital C*-algebra and $(V,V^+,e)$ be an AOU $\cl A$-space. The
following hold:

(i) \  $D_n^{\max}(V;\cl A) = \{A^* \cdot \diag(x_1,\dots,x_m) \cdot  A :
A\in M_{m,n}(\cl A), x_i\in V^+, i= 1,\dots,m,$ $m\in\bb{N}\}$;

(ii) $\cl D^{\max}(V;\cl A)$ is an $\cl A$-compatible 
matrix ordering on $V$ and $e$ is
a matrix order unit for it.
\end{proposition}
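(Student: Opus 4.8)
The plan is to prove the two parts of Proposition~\ref{minmax} in turn, using part~(i) as the main engine for part~(ii). For part~(i), I would establish the asserted set equality by showing mutual containment. For the inclusion ``$\subseteq$'', observe that any generating element $\sum_{i=1}^k A_i^* \cdot x_i \cdot A_i$ of $D_n^{\max}(V;\cl A)$, with $x_i\in V^+$ and $A_i\in M_{1,n}(\cl A)$, can be rewritten by stacking the row matrices $A_i$ into a single matrix $A\in M_{k,n}(\cl A)$ whose $i$-th row is $A_i$. A direct block computation using the module action defined in~(\ref{eq_matrixmod}) should give $A^*\cdot \diag(x_1,\dots,x_k)\cdot A = \sum_{i=1}^k A_i^*\cdot x_i\cdot A_i$, placing the element in the right-hand side. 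For the reverse inclusion ``$\supseteq$'', given $A\in M_{m,n}(\cl A)$ and $x_i\in V^+$, I would let $A_i\in M_{1,n}(\cl A)$ be the $i$-th row of $A$ and run the same block identity backwards, exhibiting $A^*\cdot\diag(x_1,\dots,x_m)\cdot A$ as a sum of the generating form. The only care needed is bookkeeping with the bimodule multiplication conventions.

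For part~(ii), I would verify the three required properties of an $\cl A$-compatible matrix ordering: that each $D_n^{\max}(V;\cl A)$ is a cone, that the family is $\cl A$-compatible in the sense of~(\ref{eq_diffs}), and that $e_n$ is a matrix order unit (Archimedeanity is presumably deferred, since the statement claims only ``matrix order unit''). The cone property is immediate from the definition, since sums and nonnegative scalar multiples of elements of the defining form remain of that form (absorbing scalars into the $x_i$ via $V^+$ being a cone). For $\cl A$-compatibility, I would take $X = \sum_i A_i^*\cdot x_i\cdot A_i \in D_m^{\max}(V;\cl A)$ and $B\in M_{m,n}(\cl A)$, and compute $B^*\cdot X\cdot B = \sum_i (A_iB)^*\cdot x_i\cdot (A_iB)$ using associativity of the module action together with~(\ref{eq_propm}); since each $A_iB\in M_{1,n}(\cl A)$, this lands in $D_n^{\max}(V;\cl A)$. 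Here the characterisation in~(i) may streamline the argument, as $B^*\cdot(A^*\cdot \diag(x_i)\cdot A)\cdot B = (AB)^*\cdot\diag(x_i)\cdot(AB)$.

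The order-unit property is where the real content lies, and I expect it to be the main obstacle. I need to show that for every $X\in M_n(V)_h$ there is $r>0$ with $re_n - X \in D_n^{\max}(V;\cl A)$. The natural strategy is to reduce to the scalar AOU case already handled in~\cite{ptt}, invoking Remark~\ref{r_scop}, which identifies $D_n^{\max}(V) = D_n^{\max}(V;\bb{C}1)$. Since $\bb{C}1\subseteq\cl A$, that remark also gives $D_n^{\max}(V)\subseteq D_n^{\max}(V;\cl A)$; hence it suffices to show that $e_n$ is already an order unit for the smaller cone $D_n^{\max}(V)$, which follows from the corresponding result for $\omax(V)$ established in~\cite{ptt}. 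Concretely, given hermitian $X=(x_{i,j})$, I would bound each diagonal entry $x_{i,i}\leq r_ie$ and control the off-diagonal entries using the order structure of $V$, expressing a suitable multiple of $e_n$ minus $X$ as a positive combination $\sum B_i\otimes x_i$ with $B_i\in M_n^+$ and $x_i\in V^+$; this is exactly the computation carried out in~\cite{ptt} for the minimal structure, so I would cite it rather than reproduce it. The subtlety to watch is that the module-enriched cone $D_n^{\max}(V;\cl A)$ is potentially much larger than $D_n^{\max}(V)$, but enlarging the cone only makes the order-unit condition easier, so the containment $D_n^{\max}(V)\subseteq D_n^{\max}(V;\cl A)$ transfers the property in the favourable direction.
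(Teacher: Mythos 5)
Your proposal is correct and is essentially the paper's own argument: both hinge on the block identity $A^*\cdot\diag(x_1,\dots,x_m)\cdot A=\sum_{i=1}^m A_i^*\cdot x_i\cdot A_i$ (where $A_i$ is the $i$-th row of $A$) --- the paper merely packages the inclusion $D_n^{\max}(V;\cl A)\subseteq D_n$ through Lemma \ref{min} rather than running this identity backwards, a purely organisational difference --- and both deduce the order-unit property exactly as you propose, from Remark \ref{r_scop}, \cite[Proposition 3.10]{ptt}, and the containment $D_n^{\max}(V;\bb{C}1)\subseteq D_n^{\max}(V;\cl A)$. One small correction: the computation you invoke from \cite{ptt} (writing $re_n-X$ as $\sum_i B_i\otimes x_i$ with $B_i\in M_n^+$, $x_i\in V^+$) is the one attached to the \emph{maximal} operator system structure there, not the minimal one.
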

\begin{proof} 
(i) Let $D_n$ denote the right hand side of the equality in (i). We first
observe that $D_n$ is a cone in $M_n(V)_h$. If $x_1,\dots,x_m\in
V^+$ and $A = (a_{i,k})_{i,k}\in M_{m,n}(\cl A)$ then the
$(i,j)$-entry of $A^* \cdot \diag(x_1,\dots,x_m)\cdot A$ is equal to
$\sum_{k=1}^m a_{k,i}^*\cdot x_k \cdot a_{k,j}$ and, by (\ref{eq_propm}),
$$\left(\sum_{k=1}^m a_{k,i}^*\cdot x_k \cdot a_{k,j}\right)^* = 
\sum_{k=1}^m a_{k,j}^*\cdot x_k \cdot a_{k,i};$$
thus, $D_n\subseteq M_n(V)_h$. It is clear that $D_n$ is closed
under taking multiples with non-negative real numbers.
Fix elements
$$A^* \cdot \diag(x_1,\dots,x_m)\cdot A, \ \mbox{ and } \ 
B^* \cdot \diag(y_1,\dots,y_k)\cdot B$$
of $D_n$. Letting $C = [A \ B]^t$, we have 
\begin{eqnarray*}
& & A^*\cdot \diag(x_1,\dots,x_m)\cdot A + 
B^*\cdot \diag(y_1,\dots,y_k)\cdot B\\ & = & 
C^*\cdot \diag(x_1,\dots,x_m,y_1,\dots,y_k) \cdot C\in D_n \ ;
\end{eqnarray*}
in other words, $D_n$ is a cone. 
If $B\in M_{n,l}(\cl A)$ then
$$B^*\cdot (A^*\cdot \diag(x_1,\dots,x_m)\cdot A)\cdot B =
(AB)^*\cdot \diag(x_1,\dots,x_m)\cdot (AB)\in D_l,$$ 
and so
$(D_n)_{n=1}^{\infty}$ is $\cl A$-compatible. 
By (\ref{eq_aastar}), $D_1 = V^+$. 
Lemma \ref{min} now implies that $D_n^{\max}(V;\cl A)\subseteq D_n$ for $n\in \bb{N}$.

On the other hand, if $x_1,\dots,x_m\in V^+$ then, letting 
$E_{i}\in M_{1,m}(\cl A)$ be the row with $1$ at the $i$th coordinate and zeros elsewhere, 
we have that
$$\diag(x_1,\dots,x_m) = \sum_{i=1}^m E_{i}^*\cdot x_i\cdot E_{i}\in D_m^{\max}(V; \cl A).$$
Since the family $\cl D^{\max}(V;\cl A)$ is $\cl A$-compatible,
$$A^*\cdot \diag(x_1,\dots,x_m)\cdot A \in D_n^{\max}(V;\cl A), \ \ \ A \in M_{m,n}(\cl A).$$
Thus, $D_n\subseteq D_n^{\max}(V;\cl A)$ and (i) is established.

(ii) By Remark \ref{r_scop} and \cite[Proposition 3.10]{ptt}, 
$e_n$ is an order unit for $D_n^{\max}(V;\bb{C}1)$. 
By Remark \ref{r_scop} again, $e_n$ is an order unit for $D_n^{\max}(V;\cl A)$.
\end{proof}

For $n\in \bb{N}$, let 
$$C_n^{\max}(V;\cl A) = \{X\in M_n(V) : X + re_n \in D_n^{\max}(V;\cl A) \mbox{ for every } r> 0\}.$$

\begin{theorem}\label{th_cmaxa}
Let $\cl A$ be a unital C*-algebra and $(V,V^+,e)$ be an AOU $\cl A$-space.
Then $(C_n^{\max}(V;\cl A))_{n\in \bb{N}}$ is an operator $\cl A$-system structure on $V$. 
Moreover, if $(P_n)_{n\in \bb{N}}$ is an operator $\cl A$-system structure on $V$ then 
$$C_n^{\max}(V;\cl A)\subseteq P_n$$ 
for each $n\in \bb{N}$.
\end{theorem}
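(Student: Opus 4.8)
The plan is to establish the three defining properties of an operator $\cl A$-system structure for $(C_n^{\max}(V;\cl A))_{n\in \bb{N}}$, and then to verify its maximality. First I would observe that Proposition \ref{minmax} has already done most of the preparatory work: it tells us that $\cl D^{\max}(V;\cl A)$ is an $\cl A$-compatible matrix ordering with $D_1^{\max}(V;\cl A) = V^+$ and that $e_n$ is a matrix order unit for it. The set $C_n^{\max}(V;\cl A)$ is precisely the Archimedeanisation of $D_n^{\max}(V;\cl A)$ with respect to the order unit $e_n$, so the natural strategy is to show that passing to the Archimedean closure preserves all the structural features while forcing the Archimedean property to hold by construction.

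The key steps, in order, are as follows. I would first check that each $C_n^{\max}(V;\cl A)$ is a cone contained in $M_n(V)_h$; this follows since $D_n^{\max}(V;\cl A)$ is a cone and the relations $X + re_n, Y + re_n \in D_n^{\max}$ combine additively. Next I would verify $C_1^{\max}(V;\cl A) = V^+$: the inclusion $V^+ \subseteq C_1^{\max}$ is immediate, and the reverse inclusion is exactly the statement that $e$ is Archimedean for $V^+$ in the AOU space $V$. Then I would establish the salient cone condition $C_n^{\max}(V;\cl A) \cap (-C_n^{\max}(V;\cl A)) = \{0\}$; for this, since $C_n^{\max}(V;\cl A) \subseteq C_n^{\max}(V;\bb{C}1)$ by Remark \ref{r_scop}, I can invoke \cite[Theorem 3.2]{ptt} (or the corresponding fact for the $\bb{C}1$ case) to get the property for the larger cone and hence for ours. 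The $\cl A$-compatibility of $(C_n^{\max}(V;\cl A))_{n\in \bb{N}}$ I would deduce from that of $\cl D^{\max}(V;\cl A)$: given $X \in C_m^{\max}(V;\cl A)$ and $A\in M_{m,n}(\cl A)$, one writes $A^*\cdot(X+re_m)\cdot A = A^*\cdot X\cdot A + rA^*\cdot e_m\cdot A \in D_n^{\max}(V;\cl A)$, and then one must absorb the term $rA^*\cdot e_m\cdot A$ into a genuine multiple of $e_n$, using that $A^*\cdot e_m\cdot A \leq \|A\|^2 e_n$ in the $D_n^{\max}$-order. Finally, that $e_n$ is an Archimedean order unit for $C_n^{\max}(V;\cl A)$ is built into the definition: if $X + se_n \in C_n^{\max}$ for all $s>0$, then $X + (s+r)e_n \in D_n^{\max}$ for all $s, r>0$, so $X + te_n \in D_n^{\max}$ for all $t>0$, giving $X \in C_n^{\max}$.

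For the maximality clause, I would take an arbitrary operator $\cl A$-system structure $(P_n)_{n\in \bb{N}}$ on $V$ and show $C_n^{\max}(V;\cl A)\subseteq P_n$. Since $(P_n)$ is $\cl A$-compatible with $P_1 = V^+$, Lemma \ref{min} gives $D_n^{\max}(V;\cl A)\subseteq P_n$. Now if $X\in C_n^{\max}(V;\cl A)$, then $X + re_n \in D_n^{\max}(V;\cl A)\subseteq P_n$ for every $r>0$; since $e_n$ is Archimedean for $P_n$ (being part of the operator $\cl A$-system structure), we conclude $X\in P_n$. This also retroactively justifies the claim in the proof of Theorem \ref{th_cmina} that $e_n$ is an order unit for $C_n^{\min}(V;\cl A)$, via the inclusion $C_n^{\max}(V;\cl A)\subseteq C_n^{\min}(V;\cl A)$.

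The main obstacle I anticipate is the $\cl A$-compatibility step, specifically controlling the cross term: one needs the inequality $A^*\cdot e_m\cdot A \leq \|A\|^2 e_n$ in the $\cl D^{\max}$-ordering, which requires care since $A$ has entries in $\cl A$ rather than scalars. This hinges on the identity $\|A\|^2 e_n - A^*\cdot e_m\cdot A = A^*\cdot(\|A\|^2 1_m - A A^*)$-type manipulations combined with the relation \eqref{eq_uni} that makes $e$ commute with the $\cl A$-action, together with the fact established in Proposition \ref{minmax}(ii) that $e_n$ is an order unit for $D_n^{\max}(V;\cl A)$. Once this estimate is in hand, the rest of the argument is routine and parallels the scalar development in \cite{ptt}.
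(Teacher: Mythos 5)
Your overall architecture (Archimedeanisation of $D_n^{\max}(V;\cl A)$, compatibility by absorbing the error term $A^*\cdot e_m\cdot A$ under a multiple of $e_n$, maximality via Lemma \ref{min} plus the Archimedean property of $P_n$) is the same as the paper's, and most steps are sound. But there is one genuine error: your proof of properness of the cones, $C_n^{\max}(V;\cl A)\cap(-C_n^{\max}(V;\cl A))=\{0\}$, rests on the inclusion $C_n^{\max}(V;\cl A)\subseteq C_n^{\max}(V;\bb{C}1)$, and this inclusion goes the wrong way. Remark \ref{r_scop} says $D_n^{\max}(V;\cl B)\subseteq D_n^{\max}(V;\cl A)$ for a unital C*-subalgebra $\cl B\subseteq\cl A$: enlarging the algebra provides \emph{more} conjugating matrices and hence \emph{larger} maximal cones, so $C_n^{\max}(V;\bb{C}1)\subseteq C_n^{\max}(V;\cl A)$, not the reverse. (It is the \emph{minimal} cones that shrink as the algebra grows, Remark \ref{r_scopmin}; you have transposed the two monotonicities.) The gap is not cosmetic: by Proposition \ref{p_mn}, $C_n^{\max}(M_k;\cl D_k)=M_n(M_k)^+$, whereas $C_n^{\max}(M_k;\bb{C}1)$ is strictly smaller for some $n$ when $k\geq 2$ (otherwise every positive map on $M_k$, e.g.\ the transpose, would be completely positive by the universal property of the scalar $\omax$). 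Since properness passes to subsets but not to supersets, the scalar result of \cite{ptt} gives no information about the larger cone $C_n^{\max}(V;\cl A)$, and this step of your argument fails.

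The repair is already contained in your last paragraph, but the steps must be reordered. The paper obtains properness by first proving $C_n^{\max}(V;\cl A)\subseteq C_n^{\min}(V;\cl A)$: by Theorem \ref{th_cmina} the family $(C_n^{\min}(V;\cl A))_{n\in\bb{N}}$ is $\cl A$-compatible with first cone $V^+$, so Lemma \ref{min} gives $D_n^{\max}(V;\cl A)\subseteq C_n^{\min}(V;\cl A)$; hence for $X\in C_n^{\max}(V;\cl A)$ one has $X+re_n\in C_n^{\min}(V;\cl A)$ for all $r>0$, and the Archimedean property of $e_n$ for $C_n^{\min}(V;\cl A)$ (also from Theorem \ref{th_cmina}) yields $X\in C_n^{\min}(V;\cl A)$; properness of $C_n^{\min}(V;\cl A)$ then forces properness of $C_n^{\max}(V;\cl A)$. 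This is precisely the inclusion you invoke at the very end (maximality applied to $\omin_{\cl A}(V)$), so prove it first and use it for properness instead of the false comparison with the scalar maximal cones. A further simplification: for the compatibility step you do not need the explicit estimate $A^*\cdot e_m\cdot A\leq\|A\|^2 e_n$, nor the factorisation of $\|A\|^2 I-AA^*$ that your sketch gestures at; Proposition \ref{minmax} (ii) already states that $e_n$ is an order unit for $D_n^{\max}(V;\cl A)$, so \emph{some} $R>0$ with $Re_n-A^*\cdot e_m\cdot A\in D_n^{\max}(V;\cl A)$ exists, and that is all the absorption argument requires --- this is how the paper proceeds.
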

\begin{proof}
Write $C_n = C_n^{\max}(V;\cl A)$, $n\in \bb{N}$. 
By Theorem \ref{th_cmina} and 
Lemma \ref{min}, $C_n\subseteq C_n^{\min}(V;\cl A)$; thus, $C_n \cap (-C_n) = \{0\}$. 
Since $e_n$ is an order unit for $D_n^{\max}(V;\cl A)$ and $D_n^{\max}(V;\cl A)\subseteq C_n$, 
we have that $e_n$ is an order unit for $C_n$.

Suppose that $X\in M_n(V)_h$ is such that $X + re_n\in C_n$ for every $r > 0$. 
Let $\epsilon > 0$; then 
$$X + \epsilon e_n = \left(X + \frac{\epsilon}{2} e_n\right) + \frac{\epsilon}{2} e_n \in D_n^{\max}(V;\cl A)$$
and hence $X\in C_n$. 
Thus, $e_n$ is an Archimedean matrix order unit for $C_n$. 

It remains to show that the family $(C_n)_{n\in \bb{N}}$ is $\cl A$-compatible. 
To this end, let $X\in C_n$ for some $n\in \bb{N}$ and $A\in M_{n,m}(\cl A)$. 
By Proposition \ref{minmax}, 
there exists $R > 0$ such that 
$$R e_m - A^*\cdot e_n \cdot A \in D_m^{\max}(V;\cl A).$$
Let $r > 0$. 
Since $X + \frac{r}{R} e_n\in D_n^{\max}(V;\cl A)$ and 
the family $\cl D^{\max}(V;\cl A)$ is $\cl A$-compatible
(Proposition \ref{minmax}), we have 
\begin{eqnarray*}
& & 
A^*\cdot X\cdot A + re_m \\
& = & 
\left(A^*\cdot \left(X + \frac{r}{R} e_n\right)\cdot A\right) + 
r\left(e_m - \frac{1}{R} A^*\cdot e_n \cdot A\right) \in D_m^{\max}(V;\cl A).
\end{eqnarray*}  
It follows that $A^*\cdot X\cdot A\in C_m$. 
Thus, $(C_n)_{n\in \bb{N}}$ is an operator $\cl A$-system structure on $V$.

Suppose that $(P_n)_{n\in \bb{N}}$ is an operator $\cl A$-system structure on $V$
and $X\in C_n$ for some $n\in \bb{N}$. 
By Lemma \ref{min}, $X + r e_n\in P_n$ for all $r > 0$ and since 
$e_n$ is an Archimedean order unit for $P_n$, we conclude that $X\in P_n$. 
Thus, $C_n\subseteq P_n$, and the proof is complete. 
\end{proof}

We call
$(C_n^{\max}(V;\cl A))_{n\in \bb{N}}$ \emph{the maximal operator $\cl A$-system structure} on $V$
and let 
$$\omax\mbox{}_{\cl A}(V) = (V,(C_n^{\max}(V;\cl A))_{n\in \bb{N}},e).$$

\medskip

\noindent {\bf Remark. } Recall that, given an AOU space $(V,V^+,e)$, the 
\emph{maximal operator system structure} $(C_n^{\max}(V))_{n\in \bb{N}}$ on $V$
was defined in \cite{ptt} by letting $C_n^{\max}(V)$ be the Archimedeanisation 
of the cone $D_n^{\max}(V)$ defined in Remark \ref{r_scop}.
It follows that the maximal operator system 
$\omax(V)$ defined in \cite{ptt} coincides with $\omax_{\bb{C}}(V)$.

\begin{theorem}\label{th_acp}
Let $\cl A$ be a unital C*-algebra and $(V,V^+,e)$ be an AOU $\cl A$-space. 

(i) Suppose that $\cl S$ is an operator $\cl A$-system 
and $\phi : V\to \cl S$ is a positive $\cl A$-bimodule map. 
Then $\phi$ is completely positive as a map from $\omax\mbox{}_{\cl A}(V)$ into $\cl S$.

(ii) Suppose that $\cl T$ is an operator $\cl A$-system with underlying space $V$ and positive cone $V^+$, 
such that for every operator $\cl A$-system $\cl S$,
every positive $\cl A$-bimodule map $\phi : \cl T\to \cl S$ is completely positive.
Then there exists a unital $\cl A$-bimodule map $\psi : \cl T\to \omax_{\cl A}(V)$ that is a complete order isomorphism.
\end{theorem}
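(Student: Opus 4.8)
The plan is to mirror the structure of the proof of Theorem \ref{th_umin}, which established the universal property of the minimal structure, but now exploiting the extremality of $\omax_{\cl A}(V)$ established in Theorem \ref{th_cmaxa}. Part (i) should be a direct computation: given a positive $\cl A$-bimodule map $\phi : V \to \cl S$, I want to show that $\phi^{(n)}$ carries $C_n^{\max}(V;\cl A)$ into $M_n(\cl S)^+$ for every $n$. By Proposition \ref{minmax}(i), a typical element of $D_n^{\max}(V;\cl A)$ has the form $A^* \cdot \diag(x_1,\dots,x_m) \cdot A$ with $x_i \in V^+$ and $A \in M_{m,n}(\cl A)$. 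Applying $\phi^{(n)}$ and using that $\phi$ is an $\cl A$-bimodule map together with positivity of $\phi$ on $V^+$, I expect to obtain
\begin{equation*}
\phi^{(n)}\bigl(A^* \cdot \diag(x_1,\dots,x_m) \cdot A\bigr) = A^* \cdot \diag(\phi(x_1),\dots,\phi(x_m)) \cdot A \in M_n(\cl S)^+,
\end{equation*}
since each $\phi(x_i) \in \cl S^+$ and the cone $(M_n(\cl S)^+)_{n}$ is $\cl A$-compatible. This shows $\phi^{(n)}(D_n^{\max}(V;\cl A)) \subseteq M_n(\cl S)^+$. To pass from $D_n^{\max}$ to the Archimedeanised cone $C_n^{\max}$, I would take $X \in C_n^{\max}(V;\cl A)$, so $X + r e_n \in D_n^{\max}(V;\cl A)$ for every $r>0$, whence $\phi^{(n)}(X) + r \phi^{(n)}(e_n) \in M_n(\cl S)^+$; since $\phi$ is unital (or using that $e_n$ maps to the unit $f_n$ of $\cl S$) and $M_n(\cl S)^+$ has $f_n$ as an Archimedean order unit, letting $r \to 0$ gives $\phi^{(n)}(X) \in M_n(\cl S)^+$.

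The main subtlety here is whether $\phi$ must be assumed unital: a positive $\cl A$-bimodule map need not send $e$ to the unit of $\cl S$, so the Archimedean step requires care. I expect this to be handled by noting that $\phi(e) \in \cl S^+$ and that $r\phi(e)$ can be absorbed using the order structure — specifically, for each $\epsilon>0$ one has $\phi^{(n)}(X) + \epsilon f_n \geq \phi^{(n)}(X) + r\phi^{(n)}(e_n) \geq 0$ for suitably small $r$ whenever $\phi(e) \leq R f$ for some $R>0$, which holds because $f$ is an order unit for $\cl S$. This order-unit domination, followed by the Archimedean property of $f_n$ in $M_n(\cl S)^+$, is the one genuinely non-formal point in part (i).

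For part (ii), I would argue exactly as in the proof of Theorem \ref{th_umin}(ii), using the extremality from Theorem \ref{th_cmaxa} in place of that from Theorem \ref{th_cmina}. The identity $\id : V \to \omax_{\cl A}(V)$ is a positive $\cl A$-bimodule map (indeed $V^+ = C_1^{\max}(V;\cl A)$), so by part (i) applied with $\cl S = \omax_{\cl A}(V)$, the map $\id : \cl T \to \omax_{\cl A}(V)$ is completely positive once I observe $\id : \cl T \to V$ is positive and $\cl A$-bimodular. Conversely, the hypothesis on $\cl T$ applied to the positive $\cl A$-bimodule map $\id : \omax_{\cl A}(V) \to \cl T$ (which is a map into an arbitrary operator $\cl A$-system $\cl S = \cl T$) forces $\id : \omax_{\cl A}(V) \to \cl T$ to be completely positive as well. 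Setting $\psi = \id$ then yields a unital $\cl A$-bimodule complete order isomorphism. The only thing to verify is that the identity maps in question are genuinely $\cl A$-bimodular and positive at the first level, which is immediate since all three objects share the underlying bimodule $V$ and the first-level cone $V^+$; the completely positive inverse then comes for free from the two directions just established.
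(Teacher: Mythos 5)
Your part (i) is correct and follows essentially the paper's route: first show $\phi^{(n)}(D_n^{\max}(V;\cl A))\subseteq M_n(\cl S)^+$ using bimodularity and $\cl A$-compatibility, then absorb the error term $r\,\phi^{(n)}(e_n)$ coming from the Archimedeanisation. The paper finishes this last step by letting $r\to 0$ and using that $M_n(\cl S)^+$ is norm closed; your alternative finish --- dominate $\phi(e)$ by $Rf$ using that $f$ is an order unit and then invoke the Archimedean property of $f_n$ --- is a valid, purely order-theoretic variant of the same step, and it correctly addresses the non-unitality of $\phi$.

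Part (ii), however, is not a proof as written: you have interchanged the two tools. To prove that $\id : \cl T\to \omax_{\cl A}(V)$ is completely positive you appeal to part (i); but part (i) only produces conclusions about maps whose \emph{domain} carries the maximal structure, and it says nothing about maps out of $\cl T$. In fact no argument for this direction can avoid the hypothesis on $\cl T$: your cited ingredients (part (i) plus positivity and bimodularity of $\id : \cl T\to V$) hold for \emph{every} operator $\cl A$-system $\cl T$ with underlying cone $V^+$, so the same reasoning applied to $\cl T = \omin_{\cl A}(V)$ would show that $\id : \omin_{\cl A}(V)\to \omax_{\cl A}(V)$ is completely positive, which is false whenever the two structures differ (e.g. $V = M_k$, $\cl A = \bb{C}I$). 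Conversely, to prove that $\id : \omax_{\cl A}(V)\to \cl T$ is completely positive you appeal to the hypothesis on $\cl T$; but that hypothesis concerns maps with domain $\cl T$, not codomain $\cl T$, so it does not apply to this map either. The correct assignment is the opposite of yours: the hypothesis on $\cl T$, applied with $\cl S = \omax_{\cl A}(V)$ and $\phi = \id$, gives that $\id : \cl T\to \omax_{\cl A}(V)$ is completely positive, while part (i), applied with $\cl S = \cl T$ and $\phi = \id : V\to \cl T$ (or, alternatively, the maximality statement of Theorem \ref{th_cmaxa}), gives that $\id : \omax_{\cl A}(V)\to \cl T$ is completely positive. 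With the two justifications swapped back, setting $\psi = \id$ completes the proof exactly as in Theorem \ref{th_umin}(ii); the slip evidently came from copying that proof verbatim instead of dualising it, since in Theorem \ref{th_umin}(ii) the hypothesis on $\cl T$ governs maps \emph{into} $\cl T$, whereas here it governs maps \emph{out of} $\cl T$.
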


\begin{proof}
(i) Let $\cl S$ is an operator $\cl A$-system and $\phi : V\to \cl S$ be a positive $\cl A$-bimodule map.
The modularity property of $\phi$ and the definition of $D_n^{\max}(V;\cl A)$ imply that 
$\phi^{(n)}(D_n^{\max}(V;\cl A)) \subseteq M_n(\cl S)^+$. 
Suppose that $X\in C_n^{\max}(V;\cl A)$. Letting $z = \phi(e)$, we now have that 
$\phi^{(n)}(X) + r(z\otimes I_n) \in M_n(\cl S)^+$ for every $r > 0$. Since $M_n(\cl S)^+$ is closed, this implies that 
$\phi^{(n)}(X) \in M_n(\cl S)^+$. 
Thus,  $\phi$ is completely positive. 

(ii) is similar to the proof of Theorem \ref{th_umin} (ii). 
\end{proof}

\noindent {\bf Remark. } 
Let $\cl A$ be a C*-algebra and 
$\frak{A}_{\cl A}$ (resp. $\frak{S}_{\cl A}$) be the category, whose objects are AOU $\cl A$-spaces
(resp. operator $\cl A$-systems) and whose morphisms are unital positive
(resp. unital completely positive) maps. 
It is easy to see that the 
correspondences $V\to \omin_{\cl A}(V)$ and $V\to \omax_{\cl A}(V)$
are covariant functors from $\frak{A}_{\cl A}$ into $\frak{S}_{\cl A}$.

\medskip

We finish this section with considering the case where $V = M_k$ and $\cl A$ coincides with 
its subalgebra $\cl D_k$ of all diagonal matrices.

\begin{proposition}\label{p_mn}
We have that $M_k = \omin_{\cl D_k}(M_k) = \omax_{\cl D_k}(M_k)$.
\end{proposition}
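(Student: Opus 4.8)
The plan is to show that the \emph{canonical} operator system structure on $M_k$ — the one arising from the identity representation $M_k \subseteq \cl B(\bb{C}^k)$, with cones $P_n := M_n(M_k)^+ = M_{nk}^+$ — is simultaneously the minimal and the maximal operator $\cl D_k$-system structure. First I would record that $(M_k, (M_k)^+, I_k)$ is an AOU $\cl D_k$-space for the obvious bimodule action $a\cdot x = ax$, $x\cdot a = xa$, and that its canonical structure is genuinely an operator $\cl D_k$-system: this is immediate from Theorem \ref{th_repa}, taking $H = \bb{C}^k$, with $\gamma$ the identity embedding and $\pi$ the inclusion $\cl D_k \hookrightarrow \cl B(\bb{C}^k)$. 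With this in hand, Theorems \ref{th_cmina} and \ref{th_cmaxa} supply the chain
\begin{equation*}
C_n^{\max}(M_k;\cl D_k) \subseteq P_n \subseteq C_n^{\min}(M_k;\cl D_k), \qquad n\in \bb{N},
\end{equation*}
so it suffices to establish the two reverse inclusions $C_n^{\min}(M_k;\cl D_k) \subseteq P_n$ and $P_n \subseteq C_n^{\max}(M_k;\cl D_k)$; together these force equality throughout and yield the proposition.

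For the first inclusion I would exploit that the all-ones vector $\mathbf 1 = (1,\dots,1) \in \bb{C}^k$ lets columns of diagonal matrices reach every vector of $\bb{C}^{nk}$. Writing an element of $(\bb{C}^k)^n \cong \bb{C}^{nk}$ as $\eta = (\eta_{i,p})$, I would, given $\eta$, set $C = (c_i)_{i=1}^n \in M_{n,1}(\cl D_k)$ with $c_i = \diag(\eta_{i,1},\dots,\eta_{i,k})$, so that $C\mathbf 1 = \eta$. Then for $X \in C_n^{\min}(M_k;\cl D_k)$ one has $C^*\cdot X\cdot C \in (M_k)^+$, whence $(X\eta,\eta) = ((C^*\cdot X\cdot C)\mathbf 1,\mathbf 1) \ge 0$. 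As $\eta$ was arbitrary, $X \ge 0$, that is, $X\in P_n$.

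The main obstacle is the second inclusion, which asks the maximal cone to be large enough to capture every positive semidefinite matrix. Since $D_n^{\max}(M_k;\cl D_k)\subseteq C_n^{\max}(M_k;\cl D_k)$ (established in the proof of Theorem \ref{th_cmaxa}) and $D_n^{\max}$ is a cone closed under addition, while every $X\in M_{nk}^+$ is a finite sum of rank-one positives $\eta\eta^*$ by the spectral decomposition, it is enough to show $\eta\eta^* \in D_n^{\max}(M_k;\cl D_k)$ for each $\eta\in\bb{C}^{nk}$. Here I would produce an explicit factorisation: take $A = (a_1,\dots,a_n) \in M_{1,n}(\cl D_k)$ with $a_i = \diag(\overline{\eta_{i,1}},\dots,\overline{\eta_{i,k}})$ and $y = \mathbf 1\mathbf 1^* \in (M_k)^+$. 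A direct check of block entries — the $(p,q)$ entry of the $(i,j)$ block of $A^*\cdot y\cdot A$ is $\overline{a_i^{(p)}}\,y_{pq}\,a_j^{(q)} = \eta_{i,p}\overline{\eta_{j,q}}$ — gives $A^*\cdot y\cdot A = \eta\eta^*$. Thus $\eta\eta^* \in D_n^{\max}(M_k;\cl D_k)$, and summing over the spectral decomposition yields $P_n \subseteq D_n^{\max}(M_k;\cl D_k) \subseteq C_n^{\max}(M_k;\cl D_k)$.

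Combining the two inclusions, $C_n^{\max}(M_k;\cl D_k) = P_n = C_n^{\min}(M_k;\cl D_k)$ for every $n$, which is precisely the assertion $M_k = \omin_{\cl D_k}(M_k) = \omax_{\cl D_k}(M_k)$. The one genuinely clever step, and the place where I would be most careful, is the rank-one factorisation of the third paragraph: the key insight is that a \emph{single} fixed positive matrix $\mathbf 1\mathbf 1^*$ together with diagonal coefficient matrices already suffices to build every positive semidefinite matrix, and this is exactly where the coincidence of the maximal and minimal structures originates.
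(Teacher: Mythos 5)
Your proposal is correct and follows essentially the same route as the paper: the minimal-cone inclusion is proved by pairing $X$ against vectors of the form $C\mathbf 1$ with $C$ a column of diagonal matrices, and the maximal-cone inclusion by decomposing a positive matrix into rank-one pieces and factoring each as $A^*\cdot J\cdot A$ with $J$ the all-ones matrix and $A$ a row of diagonals — exactly the paper's $X=(D_iJD_j^*)_{i,j}$ identity. The only cosmetic difference is that you make explicit the preliminary verification (via Theorem \ref{th_repa}) that the canonical cones form an operator $\cl D_k$-system structure, which the paper leaves implicit.
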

\begin{proof}
Suppose that $X = (X_{i,j})_{i,j}$ belongs to $M_n(\omin_{\cl D_k}(M_k))^+$.
Let $\xi = (\lambda_{i,1},\dots,\lambda_{i,k})_{i=1}^n$ be a vector in $\bb{C}^{nk}$. 
Let $D_i = \diag(\lambda_{i,1},\dots,\lambda_{i,k})$, 
and write $\xi_i$ for the vector $(\lambda_{i,1},\dots,\lambda_{i,k})$ in $\bb{C}^k$, $i = 1,\dots,n$.
Letting $e$ be the vector in $\bb{C}^k$ with all entries equal to one, we have 
$$(X\xi,\xi) = \sum_{i,j=1}^n (X_{i,j}\xi_j,\xi_i) = \sum_{i,j=1}^n (D_i^* X_{i,j} D_je,e).$$
It follows by the assumption that $(X\xi,\xi)\geq 0$; thus, $X\in M_{nk}^+$ 
and, by Theorem \ref{th_cmina},  $M_k = \omin_{\cl D_k}(M_k)$.

Now fix $X = (X_{i,j})_{i,j} \in M_{nk}^+$.
Since $X$ is the sum of rank one operators in $M_{nk}^+$, in order to show that $X\in M_n(\omax_{\cl D_k}(M_k))^+$, 
it suffices to assume that $X$ is itself of rank one. 
Write $X = R R^*$, where $R \in M_{nk,1}$, and suppose that $R = (R_1,\dots,R_n)^t$, where $R_i\in M_{k,1}$, $i = 1,\dots,n$. 
We have that $X = (R_iR_j^*)_{i,j=1}^n$. Let 
$J\in M_k$ be the matrix with all its entries equal to one, and let 
$D_i$ be the diagonal matrix whose entries coincides with the vector $R_i$, $i = 1,\dots,n$.
Then $X = (D_i J D_j^*)_{i,j=1}^n$, showing that $X\in M_n(\omax_{\cl D_k}(M_k))^+$.
By Theorem \ref{th_cmaxa}, $M_k = \omax_{\cl D_k}(M_k)$.
\end{proof}

\noindent {\bf Remark. } 
We note that the minimal and the maximal operator $\cl A$-system structure are in general distinct.
Indeed, this is the case even when $V = M_k$ and $\cl A = \bb{C}I$ \cite{ptt}.


\section{Dual operator $\cl A$-systems}\label{s_doas}

In this section, we establish a representation theorem for dual operator $\cl A$-systems. 
An operator system $\cl S$
is called a \emph{dual operator system} if it is a dual operator space, 
that is, if there exists an operator space $\cl S_*$ such that 
$(\cl S_*)^* \cong \cl S$ completely isometrically \cite{bm}. 
Here, and in the sequel, we denote by $\cl X^*$ the operator space dual \cite{blm} of an operator space $\cl X$, 
and we use the same notation for the dual Banach space of a normed space $\cl X$; 
it will be clear from the context with which category we are working.

Let $\cl S$ be an operator system. 
If $H$ is a Hilbert space and $\phi : \cl S\to \cl B(H)$ 
is a unital complete isometry such that $\phi(\cl S)$ is weak* closed, 
then $\phi(\cl S)$, and therefore $\cl S$, is a dual operator space;
thus, in this case, $\cl S$ is a dual operator system. 
The converse statement was established by Blecher and Magajna in \cite{bm}.

\begin{theorem}[\cite{bm}]\label{th_bm}
If $\cl S$ is a dual operator system then there exists a 
Hilbert space $H$, a weak* closed operator system $\cl U\subseteq \cl B(H)$ and 
a unital surjective complete order isomorhism  
$\phi : \cl S\to \cl U$ that is also a a weak* homeomorphism.
\end{theorem}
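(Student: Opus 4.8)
The plan is to prove Theorem~\ref{th_bm} by combining a dual-space analogue of the Choi--Effros representation with a Krein--Smulian / weak* compactness argument to control the image. Suppose $\cl S$ is a dual operator system, so there is an operator space $\cl S_*$ with $(\cl S_*)^* \cong \cl S$ completely isometrically. First I would fix a faithful unital complete order embedding $\iota : \cl S \to \cl B(K)$ coming from the ordinary Choi--Effros theorem \cite[Theorem 13.1]{Pa}, which exists because $\cl S$ is in particular an operator system. The difficulty is that the range $\iota(\cl S)$ need not be weak* closed, and $\iota$ need not be a weak* homeomorphism; the heart of the proof is to manufacture a representation in which both of these hold simultaneously.

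The approach I would take is to build the representation from the predual. Since $\cl S = (\cl S_*)^*$, the space $\cl S$ carries a canonical weak* topology $\sigma(\cl S, \cl S_*)$, and the positive cone $\cl S^+$ is weak* closed (being the intersection of the weak* closed halfspaces $\{x : \langle x, s\rangle \geq 0\}$ as $s$ ranges over the positive functionals in $\cl S_*$, using that states separate points of an operator system). I would then realise $\cl S$ on a Hilbert space by selecting a separating family of normal states. Concretely, for each state $s$ that is weak* continuous (i.e. lies in $\cl S_*$), the GNS-type construction from the associated completely positive map produces a representation; assembling the direct sum $\gamma = \bigoplus_\alpha \gamma_\alpha$ over a separating family of such normal states yields a unital complete isometry $\gamma : \cl S \to \cl B(H)$ with $H = \bigoplus_\alpha H_\alpha$. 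Because each $\gamma_\alpha$ is built from a weak*-continuous functional, each matrix coefficient $x \mapsto (\gamma(x)\eta,\zeta)$ is weak* continuous on $\cl S$, so $\gamma$ is weak*-to-WOT continuous.

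Having secured weak*-to-WOT continuity, I would deduce weak* homeomorphism and weak* closedness of the range via a Krein--Smulian argument. The key step is to show $\gamma$ maps the closed unit ball of $\cl S$ onto a WOT-closed (equivalently weak* closed) subset of the ball of $\cl B(H)$: the unit ball of $\cl S$ is weak* compact by Banach--Alaoglu applied to $(\cl S_*)^*$, and a weak*-to-WOT continuous map sends this compact set to a WOT-compact, hence WOT-closed, set. Krein--Smulian then upgrades this to weak* closedness of $\gamma(\cl S) =: \cl U$ as a whole, and the continuous bijection $\gamma : \cl S \to \cl U$ between a weak* compact ball and its image, being continuous and injective on a compact set, is a homeomorphism on balls and hence a weak* homeomorphism onto $\cl U$. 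Setting $\phi = \gamma$ then gives the desired unital surjective complete order isomorphism onto the weak* closed operator system $\cl U \subseteq \cl B(H)$ that is simultaneously a weak* homeomorphism.

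The main obstacle I anticipate is ensuring that the separating family of \emph{normal} (weak*-continuous) states is rich enough to give a \emph{complete} isometry, not merely an isometry, while keeping every matrix coefficient weak* continuous. Normal states separate the points of $\cl S$ since $\cl S_*$ norms $\cl S$, but one must check that matrix-level positivity is detected too, i.e. that the amplifications $\gamma^{(n)}$ are order embeddings; this requires that the chosen family of normal states, when applied at each matrix level $M_n(\cl S)$, recover the cones $M_n(\cl S)^+$. Verifying this matricial sufficiency, together with the compatibility of the GNS construction with the given dual operator space structure (so that $\gamma$ is completely isometric and not just a complete order isomorphism), is the delicate part; the weak* topology bookkeeping in the final Krein--Smulian step is then comparatively routine.
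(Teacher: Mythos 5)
You should first note that the paper does not actually prove this statement: it is quoted verbatim from Blecher--Magajna \cite{bm}, and the nearest argument in the paper is the proof of the $\cl A$-module generalisation, Theorem \ref{th_repdoas}, which is explicitly ``motivated by the proof of \cite[Theorem 1.1]{bm}'' and imports its key lemma. Your skeleton (a separating family of weak*-continuous states at all matrix levels, a direct sum of the associated maps, then a Krein--Smulian argument) is indeed the Blecher--Magajna strategy, but the two load-bearing claims with which you launch it are circular or unjustified, and they are precisely where the theorem lives. First, you assert that $\cl S^+$ is weak* closed because it equals the intersection of the halfspaces $\{x : \langle x,s\rangle \geq 0\}$ over positive $s \in \cl S_*$; by the bipolar theorem that intersection is the weak* \emph{closure} of $\cl S^+$, so this is circular --- weak* closedness of the cone is a consequence of the theorem, not an a priori fact, and a priori you do not even know that a single nonzero weak*-continuous positive functional exists. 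Second, ``normal states separate the points of $\cl S$ since $\cl S_*$ norms $\cl S$'' is a non sequitur: norming by $\cl S_*$ produces normal \emph{functionals}, and there is no a priori passage to normal \emph{states}. The usual symmetrisation $f \mapsto \frac{1}{2}(f + f^*)$, where $f^*(y) = \overline{f(y^*)}$, requires the involution of $\cl S$ to be weak* continuous --- which is itself a consequence of this theorem (this is exactly the remark the paper makes after Definition \ref{d_dasys}) --- and a Jordan-type decomposition of normal hermitian functionals into normal positive ones is not available on an operator system. The statement you actually need --- for every self-adjoint $x \in M_n(\cl S)$ of norm one and every $\epsilon > 0$ there is a weak*-continuous state $\phi$ on $M_n(\cl S)$ with $|\phi(x)| > 1-\epsilon$ --- is the key lemma of \cite{bm}, and it is exactly the step that the paper's proof of Theorem \ref{th_repdoas} quotes from \cite{bm} rather than reproves. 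Assuming it amounts to assuming the heart of the result.

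There is also a structural problem with your construction: a state on an operator system admits no GNS representation (there is no multiplication), and as a unital completely positive map into $\bb{C}$ it \emph{is} already its own ``representation,'' so the direct sum you describe, taken over normal states of $\cl S$ itself, lands in an abelian von Neumann algebra and can never be a complete isometry for a genuinely noncommutative $\cl S$. The correct device --- used in \cite{bm} and in the proof of Theorem \ref{th_repdoas} --- is to take normal states $\phi$ on $M_n(\cl S)$ for every $n$ (using that $M_n(\cl S) = (\cl S_* \hat{\otimes} M_n^*)^*$ is again a dual operator system, Remark \ref{r_sdu}), convert each into a normal unital completely positive map $\cl S \to M_n$ via \cite[Theorem 6.1]{Pa} or the sesquilinear-form construction $(\rho(a)\xi,\rho(b)\xi) \mapsto \phi(a^*\cdot y\cdot b)$ appearing in the proof of Theorem \ref{th_repdoas}, and then sum over all $n$, all self-adjoint norm-one $x \in M_n(\cl S)$ and all $\epsilon$, so that the resulting map is isometric on hermitians at every matrix level and hence a complete order embedding. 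You do flag this ``matricial sufficiency'' as the delicate point, but it cannot be deferred as a verification: together with the normal-state lemma it constitutes essentially the entire content of the theorem. Your closing Krein--Smulian argument, by contrast, is correct as stated (weak* compactness of the image of the ball, weak* closedness of the range, homeomorphism on balls upgraded by Krein--Smulian) and coincides with the endgame of \cite{bm} and of Theorem \ref{th_repdoas}.
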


\begin{remark}\label{r_sdu}
{\rm Suppose that $\cl S$ is a dual operator system and $\cl S_*$ is an 
operator space such that, up to a complete isometry, $\cl S = (\cl S_*)^*$.
Then $M_n(\cl S)$ is an operator system in a canonical fashion; 
in fact, if $\cl S\subseteq \cl B(H)$ for some Hilbert space $H$, then $M_n(\cl S)\subseteq \cl B(H^n)$. 
By \cite[1.6.2]{blm}, up to a complete isometry, $M_n(\cl S) = (\cl S_*\hat{\otimes} M_n^*)^*$,
where $\hat{\otimes}$ is the projective operator space tensor product. 
It follows that $M_n(\cl S)$ is a dual operator system, 
and its canonical weak* topology coincides with the topology of entry-wise weak* convergence:
for a net $((x_{i,j}^{\alpha})_{i,j})_{\alpha}\subseteq M_n(\cl S)$ and an element $(x_{i,j})_{i,j} \in M_n(\cl S)$, 
we have 
$$\left((x_{i,j}^{\alpha}\right)_{i,j})_{\alpha}\to^{w^*}_{\alpha} \left(x_{i,j}\right)_{i,j}
\ \Longleftrightarrow \ \left\langle x_{i,j}^{\alpha},\phi \right\rangle \to\mbox{}_{\alpha} \left\langle x_{i,j},\phi\right\rangle, 
\ i,j = 1,\dots,n, \ \phi\in \cl S_*.$$}
\end{remark}

Recall that a \emph{W*-algebra} is a C*-algebra that is also a dual Banach space; 
by Sakai's Theorem \cite{sakai}, every W*-algebra possesses a faithful *-representation 
on a Hilbert space $H$, whose image is a von Neumann algebra 
(that is, a weak* closed subalgebra of $\cl B(H)$ containing the identity operator), 
which is also a weak* homeomorphism.

\begin{definition}\label{d_dasys}
Let $\cl A$ be a W*-algebra. An operator system $\cl S$
will be called a \emph{dual operator $\cl A$-system} if 
\begin{itemize}
\item[(i)]  $\cl S$ is an operator $\cl A$-system, 
\item[(ii)] $\cl S$ is a dual operator system, and 
\item[(iii)] the map from $\cl A\times \cl S$ into $\cl S$, sending the pair $(a,x)$ to $a\cdot x$,
is separately weak* continuous. 
\end{itemize}
\end{definition}

Note that, if $\cl S$ is a dual operator system then the involution is weak* continuous, 
and thus (\ref{eq_propm}) implies that if $\cl S$ is in addition a dual operator $\cl A$-system then 
the map
$$\cl A\times \cl S\times \cl A\to \cl S, \ \ \ (a,x,b)\to a\cdot x\cdot b,$$
is separately weak* continuous.

If $\cl S$ and $\cl T$ are dual operator systems, a linear map $\phi : \cl S\to \cl T$ 
will be called \emph{normal} if it is weak* continuous. 
Suppose that $H$ is a Hilbert space, 
$\gamma : \cl S\to \cl B(H)$
is a unital complete order isomorphism such that $\gamma(\cl S)$ is weak* closed and 
$\gamma : \cl S\to \gamma(\cl S)$ is a weak* homeomorphism, and
$\pi : \cl A\to \cl B(H)$ is a unital normal *-homomorphism such that 
$\gamma(a\cdot x) = \pi(a)\gamma(x)$ for all $x\in \cl S$ and all $a\in \cl A$. 
It is clear that, in this case, $\cl S$ is a dual operator $\cl A$-system. 
Theorem \ref{th_repdoas} below establishes the converse of this fact.
The result is both a weak* version of Theorem \ref{th_repa} 
and an $\cl A$-module version of Theorem \ref{th_bm}. 

We will need two lemmas. Recall that, if $\cl A$ is a W*-algebra and $n\in \bb{N}$
then $M_n(\cl A)$ is a W*-algebra in a canonical way.

\begin{remark}\label{l_matrdops}
{\rm Let $\cl A$ be a W*-algebra and $\cl S$ be a dual operator $\cl A$-system.
It is straightforward to verify that $M_n(\cl S)$ is a dual operator $M_n(\cl A)$-system,
when it is equipped with the action defined in (\ref{eq_matrixmod}). }
\end{remark}


\begin{lemma}\label{l_funct}
Let $\cl A$ be a W*-algebra, $\cl S$ be a dual operator $\cl A$-system
and $\phi : \cl S\to \bb{C}$ be a normal state. 
Then the functional $\omega : \cl A\to \bb{C}$ given by 
$\omega(a) = \phi(a\cdot 1)$, $a\in \cl A$, is a normal state of $\cl A$ and 
\begin{equation}\label{eq_ineqq}
|\phi(a\cdot x\cdot b)|\leq \omega(aa^*)^{1/2}\omega(b^*b)^{1/2},
\end{equation}
for all $a\in M_{1,m}(\cl A)$, $b\in M_{m,1}(\cl A)$, $x\in M_m(\cl S)$ with $\|x\|\leq 1$, and
$m\in \bb{N}$.
\end{lemma}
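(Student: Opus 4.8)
The plan is to treat the two assertions separately: first the state properties of $\omega$, which are routine, and then the Cauchy--Schwarz inequality, which I would extract from the positivity of a single $2\times2$ block matrix.

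For the first part, unitality is immediate from the module axiom $1\cdot 1 = 1$ together with $\phi(e)=1$. For positivity I would write a positive $a\in\cl A$ as $a=c^*c$ and use (\ref{eq_uni}) to rewrite $a\cdot 1 = c^*\cdot(c\cdot e) = c^*\cdot e\cdot c$, which lies in $\cl S^+$ by (\ref{eq_aastar}); since $\phi$ is positive, $\omega(a)\geq 0$. Normality follows by writing $\omega=\phi\circ\rho$, where $\rho(a)=a\cdot 1$ is weak* continuous by condition (iii) of Definition \ref{d_dasys} and $\phi$ is weak* continuous by hypothesis, so the composition is a normal functional on $\cl A$.

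For the inequality, fix $a\in M_{1,m}(\cl A)$, $b\in M_{m,1}(\cl A)$ and $x\in M_m(\cl S)$ with $\|x\|\leq 1$. The first step is to note that the block matrix
\[
X=\begin{pmatrix} e_m & x \\ x^* & e_m\end{pmatrix}\in M_{2m}(\cl S)
\]
is positive: realising $\cl S\subseteq \cl B(H)$, this is the standard fact that $\begin{pmatrix} I & T \\ T^* & I\end{pmatrix}\geq 0$ for every contraction $T$ on $H^m$. I would then form the column $c=\begin{pmatrix} a^* \\ b\end{pmatrix}\in M_{2m,1}(\cl A)$, so that $c^*\cdot X\cdot c\in \cl S^+$ by the $\cl A$-compatibility (\ref{eq_diffs}). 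A direct block computation, using (\ref{eq_propm}) to handle the adjoints, gives
\[
c^*\cdot X\cdot c=\sum_{i=1}^m a_i\cdot e\cdot a_i^* + (a\cdot x\cdot b)+(a\cdot x\cdot b)^* + \sum_{i=1}^m b_i^*\cdot e\cdot b_i,
\]
where $a\cdot x\cdot b=\sum_{p,j}a_p\cdot x_{p,j}\cdot b_j$ is the off-diagonal cross term. The key algebraic point is that, by (\ref{eq_uni}), $a_i\cdot e\cdot a_i^*=(a_ia_i^*)\cdot e$ and likewise $b_i^*\cdot e\cdot b_i=(b_i^*b_i)\cdot e$, so that applying $\phi$ converts the two diagonal sums into $\omega(aa^*)$ and $\omega(b^*b)$, while $\phi((a\cdot x\cdot b)^*)=\overline{\phi(a\cdot x\cdot b)}$ since $\phi$ is a state.

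Applying $\phi$ to $c^*\cdot X\cdot c\geq 0$ therefore yields $\omega(aa^*)+\omega(b^*b)+2\,\mathrm{Re}\,\phi(a\cdot x\cdot b)\geq 0$. Replacing $b$ by $e^{i\theta}b$ leaves $\omega(b^*b)$ unchanged and rotates the cross term, so choosing the phase so that $e^{i\theta}\phi(a\cdot x\cdot b)=-|\phi(a\cdot x\cdot b)|$ upgrades this to the arithmetic-mean bound $|\phi(a\cdot x\cdot b)|\leq \tfrac12\big(\omega(aa^*)+\omega(b^*b)\big)$. Finally I would replace $a$ by $ta$ and $b$ by $t^{-1}b$ for $t>0$: this fixes $a\cdot x\cdot b$ but scales the two terms to $t^2\omega(aa^*)$ and $t^{-2}\omega(b^*b)$, and optimising over $t$ (treating the degenerate case where one factor vanishes by letting $t\to 0$ or $t\to\infty$) produces the geometric mean $\omega(aa^*)^{1/2}\omega(b^*b)^{1/2}$. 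I expect the only genuinely delicate part to be the bookkeeping in the block computation and the correct use of (\ref{eq_uni}) and (\ref{eq_propm}) to identify the diagonal terms; the passage from the arithmetic to the geometric mean is the standard scaling argument.
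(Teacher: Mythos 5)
Your proof is correct, but it takes a genuinely different route from the paper's. The paper proves the lemma \emph{representation-theoretically}: it invokes Theorem \ref{th_repa} to realise $\cl S$ inside $\cl B(H)$ together with a compatible unital $*$-homomorphism $\pi:\cl A\to\cl B(H)$, transports $\phi$ to a functional $\phi'$ on $\gamma(\cl S)$ written in vector form $\phi'(T)=\sum_{i=1}^\infty (T\xi_i,\xi_i)$, derives the identity $\omega(ab)=\phi(a\cdot 1\cdot b)$ from the representation (this also gives positivity of $\omega$), and then obtains (\ref{eq_ineqq}) by the Cauchy--Schwarz inequality in the Hilbert space $H$. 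Your argument is instead intrinsic and order-theoretic: positivity of $\omega$ comes directly from (\ref{eq_uni}) and (\ref{eq_aastar}) via $a\cdot 1=c^*\cdot e\cdot c$, and the inequality comes from applying $\phi$ to $c^*\cdot X\cdot c\in\cl S^+$, where $X$ is the $2\times 2$ block matrix built from $x$ and $c=\bigl(\begin{smallmatrix} a^*\\ b\end{smallmatrix}\bigr)$, using the $\cl A$-compatibility (\ref{eq_diffs}), followed by the standard phase-rotation and scaling upgrade from arithmetic to geometric mean (your handling of the degenerate case is correct). Each approach has something to recommend it. Yours is more elementary and self-contained: it never uses the representation theorem, and it makes visible that normality of $\phi$ is needed only for normality of $\omega$, not for the inequality --- indeed your argument proves (\ref{eq_ineqq}) for an arbitrary state. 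It also sidesteps a delicate point in the paper's proof, namely writing the normal state $\phi'$ on $\gamma(\cl S)$ (which is not a von Neumann algebra) in the vector form above, a step that strictly speaking requires an extension argument. The paper's approach, in exchange, produces the identity $\omega(ab)=\phi(a\cdot 1\cdot b)$ and sets up exactly the objects ($\gamma$, $\pi$, GNS-type vector states) that are reused immediately afterwards in the proof of Theorem \ref{th_repdoas}, so it integrates more tightly with the surrounding section.
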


\begin{proof}
Let $H$, $\gamma$ and $\pi$ be as in Theorem \ref{th_repa}, and 
let $\phi' : \gamma(\cl S)\to \bb{C}$ be given by $\phi'(\gamma(x)) = \phi(x)$, $x\in \cl S$. 
If $a,b\in \cl A$ then
\begin{eqnarray*}
\omega(ab) 
& = & \phi((ab)\cdot 1) = \phi'(\gamma((ab)\cdot 1)) = 
\phi'(\pi(ab)\gamma(1)) = \phi'(\pi(ab))\\
& = & 
\phi'(\pi(a)\gamma(1)\pi(b)) = \phi'(\gamma(a\cdot 1\cdot b)) = \phi(a\cdot 1\cdot b).
\end{eqnarray*}
Thus, 
$\omega(a^*a) = \phi(a^*\cdot 1\cdot a)\geq 0$ for every $a\in \cl A$, and hence 
$\omega$ is positive. 
Moreover, $\omega(1) = \phi(1) = 1$ and hence $\omega$ is a state. 
By the separate weak* continuity of the $\cl A$-module action on $\cl S$, the state $\omega$ is normal.

Suppose that $\phi'$ has the form 
$$\phi'(T) = \sum_{i=1}^{\infty} (T\xi_i,\xi_i), \ \ \ T\in \gamma(\cl S),$$
where $(\xi_i)_{i\in \bb{N}}\subseteq H$ with $\sum_{i=1}^{\infty}\|\xi_i\|^2  = 1$.
If $x\in M_m(\cl S)$, $\|x\|\leq 1$,  
$a\in M_{1,m}(\cl A)$ and $b\in M_{m,1}(\cl A)$, then 
\begin{eqnarray*}
|\phi(a\cdot x\cdot b)|
& = & 
\left|\phi' \left(\pi^{(1,m)}(a)\gamma^{(m)}(x)\pi^{(m,1)}(b)\right)\right| \\
& = & 
\left|\sum_{i=1}^{\infty} \left(\pi^{(1,m)}(a)\gamma^{(m)}(x)\pi^{(m,1)}(b)\xi_i,\xi_i\right)\right|\\
& \leq & 
\sum_{i=1}^{\infty} \left|\left(\gamma^{(m)}(x)\pi^{(m,1)}(b)\xi_i,\pi^{(m,1)}(a^*)\xi_i\right)\right|\\
& \leq & 
\left(\sum_{i=1}^{\infty} \left\|\pi^{(m,1)}(b)\xi_i\right\|^2\right)^{1/2} 
\left(\sum_{i=1}^{\infty} \left\|\pi^{(m,1)}(a^*)\xi_i\right\|^2\right)^{1/2}\\
& = & 
\phi'(\pi(b^*b))^{1/2} \phi'(\pi(aa^*))^{1/2} 
= 
\omega(aa^*)^{1/2}\omega(b^*b)^{1/2}.
\end{eqnarray*}
\end{proof}

We will need the following modification of a result of 
R. R. Smith \cite{smith} on automatic complete boundedness. 
Its proof is a straightforward modification of the proof of \cite[Theorem 2.1]{smith} and is hence omitted.

\begin{theorem}\label{l_sm}
Let $\cl A$ be a unital C*-algebra, $\cl S$ be an operator $\cl A$-system and 
$\rho : \cl A\to \cl B(H)$ be a cyclic *-representation.
Suppose that $\Phi : \cl S\to \cl B(H)$ is a linear map such that 
$\Phi(a\cdot x\cdot b) = \rho(a)\Phi(x)\rho(b)$ for all $x\in \cl S$ and all $a,b\in \cl A$. 
If $\Phi$ is contractive then $\Phi$ is completely contractive. 
\end{theorem}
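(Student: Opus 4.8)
The plan is to show directly that $\Phi^{(m)} : M_m(\cl S) \to M_m(\cl B(H)) = \cl B(H^m)$ is contractive for every $m \in \bb{N}$, which is precisely the assertion that $\Phi$ is completely contractive. Fix $m$ and $x = (x_{i,j}) \in M_m(\cl S)$, and let $\xi \in H$ be a unit cyclic vector for $\rho$. The starting point is the computation, valid for any $a = (a_1,\dots,a_m) \in M_{1,m}(\cl A)$ and $b = (b_1,\dots,b_m)^t \in M_{m,1}(\cl A)$,
\[
\left(\Phi^{(m)}(x)\eta, \zeta\right) = \sum_{i,j=1}^m \left(\rho(a_i)\Phi(x_{i,j})\rho(b_j)\xi, \xi\right) = \left(\Phi(a\cdot x\cdot b)\xi, \xi\right),
\]
where $\eta = (\rho(b_j)\xi)_{j=1}^m$ and $\zeta = (\rho(a_i^*)\xi)_{i=1}^m$ lie in $H^m$, and $a\cdot x\cdot b = \sum_{i,j} a_i\cdot x_{i,j}\cdot b_j \in \cl S$; the middle equality uses the module identity $\Phi(a_i\cdot x_{i,j}\cdot b_j) = \rho(a_i)\Phi(x_{i,j})\rho(b_j)$. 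Note that $\|\eta\|^2 = (\rho(b^*b)\xi,\xi)$ and $\|\zeta\|^2 = (\rho(aa^*)\xi,\xi)$, where $b^*b = \sum_j b_j^*b_j$ and $aa^* = \sum_i a_i a_i^*$.

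Since $\rho$ is cyclic, $\rho(\cl A)\xi$ is dense in $H$, so vectors of the form $(\rho(b_j)\xi)_j$ and $(\rho(a_i^*)\xi)_i$ are dense in $H^m$. As $\Phi^{(m)}(x)$ is a bounded operator, it therefore suffices to establish the estimate
\[
\left|\left(\Phi(a\cdot x\cdot b)\xi,\xi\right)\right| \;\le\; \|x\|\,(\rho(aa^*)\xi,\xi)^{1/2}(\rho(b^*b)\xi,\xi)^{1/2}
\]
for all such $a,b$; passing to the limit then yields $|(\Phi^{(m)}(x)\eta,\zeta)| \le \|x\|\,\|\eta\|\,\|\zeta\|$ for all $\eta,\zeta\in H^m$, that is, $\|\Phi^{(m)}(x)\| \le \|x\|$.

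The crux -- and the step I expect to be the main obstacle -- is the displayed inequality, which must trade the (generally too large) C*-norms $\|a\|,\|b\|$ for the seminorms coming from the vector state of $\xi$. I would prove it by a factorisation trick: for $\epsilon > 0$ set $u = (aa^* + \epsilon 1)^{1/2}$ and $v = (b^*b + \epsilon 1)^{1/2}$, which are positive and invertible in $\cl A$. Writing $a = u\,(u^{-1}a)$ and $b = (bv^{-1})\,v$, and putting $x' = (u^{-1}a)\cdot x\cdot(bv^{-1}) \in \cl S$, one checks that $\|u^{-1}a\| \le 1$ and $\|bv^{-1}\| \le 1$ (indeed $(u^{-1}a)(u^{-1}a)^* = 1 - \epsilon u^{-2} \le 1$, and similarly for $bv^{-1}$). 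Since $\cl S$ is an operator $\cl A$-system, Theorem \ref{th_repa} gives the submultiplicativity $\|A\cdot y\cdot B\| \le \|A\|\,\|y\|\,\|B\|$ for rows and columns $A,B$ over $\cl A$, whence $\|x'\| \le \|x\|$, while $a\cdot x\cdot b = u\cdot x'\cdot v$. Using the module property, the self-adjointness of $u$, and the contractivity of $\Phi$,
\[
\left|\left(\Phi(a\cdot x\cdot b)\xi,\xi\right)\right| = \left|\left(\Phi(x')\rho(v)\xi, \rho(u)\xi\right)\right| \le \|x'\|\,\|\rho(v)\xi\|\,\|\rho(u)\xi\|.
\]
Since $\|\rho(u)\xi\|^2 = (\rho(aa^*)\xi,\xi) + \epsilon$ and $\|\rho(v)\xi\|^2 = (\rho(b^*b)\xi,\xi) + \epsilon$, letting $\epsilon \to 0$ yields the required inequality. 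This is exactly the estimate of Lemma \ref{l_funct}, obtained here for the contractive functional $x \mapsto (\Phi(x)\xi,\xi)$ in place of a normal state; the factorisation is the device that closes the gap between $\|\eta\|$ and $\|b\|$. Assembling these steps completes the proof that $\Phi$ is completely contractive.
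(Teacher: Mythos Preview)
Your argument is correct and is precisely the Smith-type factorisation that the paper has in mind: the paper omits the proof entirely, noting only that it is a straightforward modification of \cite[Theorem~2.1]{smith}. Your write-up is exactly that modification, carried over from the C*-algebra setting of \cite{smith} to the operator $\cl A$-system setting via Theorem~\ref{th_repa}, and your observation that the key inequality is the analogue of Lemma~\ref{l_funct} for the vector state $x\mapsto (\Phi(x)\xi,\xi)$ is on point.
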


\begin{theorem}\label{th_repdoas}
Let $\cl A$ be a W*-algebra and $\cl S$ be a dual operator $\cl A$-system. 
Then there exist a Hilbert space $H$, a unital complete order embedding  
$\gamma : \cl S\to \cl B(H)$ with the property that $\gamma(\cl S)$ is weak* closed and $\gamma$ is a  weak* homeomorphism,
and a unital normal *-homomorphism $\pi : \cl A\to \cl B(H)$, such that 
\begin{equation}\label{eq_modc}
\gamma(a\cdot x) = \pi(a)\gamma(x), \ \ \ x\in \cl S, a\in \cl A.
\end{equation}
\end{theorem}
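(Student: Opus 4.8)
The plan is to realise $\cl S$ as a weak* closed operator subsystem of some $\cl B(H)$ by taking a direct sum of module GNS representations, one for each normal state on each matrix amplification $M_n(\cl S)$, organised so that a single normal $*$-homomorphism $\pi$ of $\cl A$ implements the action; producing the triple $(H,\gamma,\pi)$ with the listed properties is all that is needed. For fixed $n$ and a normal state $\Psi$ on $M_n(\cl S)$ I would run a GNS construction at the matrix level. By Remark \ref{l_matrdops}, $M_n(\cl S)$ is a dual operator $M_n(\cl A)$-system and $M_n(\cl A)$ is a W*-algebra, so Lemma \ref{l_funct} applied to this pair yields a normal state $\Omega$ on $M_n(\cl A)$ and the Cauchy--Schwarz bound (\ref{eq_ineqq}). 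Let $(\Pi_\Psi,K_\Psi,\zeta_\Psi)$ be the (cyclic, and since $\Omega$ is normal, normal) GNS representation of $\Omega$. The bound (\ref{eq_ineqq}) lets me define a bounded $\Gamma_\Psi(Y)$ on $K_\Psi$ for each $Y\in M_n(\cl S)$ through the sesquilinear form $(\Gamma_\Psi(Y)\Pi_\Psi(B)\zeta_\Psi,\Pi_\Psi(A)\zeta_\Psi)=\Psi(A^*\cdot Y\cdot B)$, $A,B\in M_n(\cl A)$; a routine computation shows $\Gamma_\Psi$ is a contractive unital $M_n(\cl A)$-bimodule map, weak* continuous (using normality of $\Psi$ and separate weak* continuity of the action), with $\Psi(Y)=(\Gamma_\Psi(Y)\zeta_\Psi,\zeta_\Psi)$. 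Since $\Pi_\Psi$ is a cyclic $*$-representation, Theorem \ref{l_sm} upgrades contractivity to complete contractivity.

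The key structural step is to disaggregate this $M_n(\cl A)$-picture into an $\cl A$-picture. Restricting $\Pi_\Psi$ to the matrix units $E_{ij}\otimes 1$ identifies $K_\Psi$ with $(H_\Psi)^n$ and forces $\Pi_\Psi=\rho_\Psi^{(n)}$ for a normal unital $*$-representation $\rho_\Psi:\cl A\to \cl B(H_\Psi)$; the same matrix-unit argument applied to the bimodule map $\Gamma_\Psi$ forces $\Gamma_\Psi=\gamma_\Psi^{(n)}$ for a unital completely contractive $\cl A$-bimodule map $\gamma_\Psi:\cl S\to \cl B(H_\Psi)$ intertwined by $\rho_\Psi$. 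I would then set $H=\bigoplus_{n}\bigoplus_{\Psi}H_\Psi$, the inner sum over all normal states $\Psi$ on $M_n(\cl S)$, together with $\pi=\bigoplus\rho_\Psi$ and $\gamma=\bigoplus\gamma_\Psi$. A direct sum of normal representations is normal, so $\pi$ is a unital normal $*$-homomorphism, while $\gamma$ is unital, completely positive, weak* continuous, and satisfies (\ref{eq_modc}).

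It remains to see that $\gamma$ is a complete order embedding that is a weak* homeomorphism onto a weak* closed image. For the order embedding, fix $X\in M_n(\cl S)_h$ with $\gamma^{(n)}(X)\geq 0$. Because $M_n(\cl S)$ is a dual operator system, its positive cone is weak* closed, so by Hahn--Banach separation $X\in M_n(\cl S)^+$ as soon as $\Psi(X)\geq 0$ for every normal state $\Psi$ on $M_n(\cl S)$; and for each such $\Psi$ one has $\Psi(X)=(\gamma_\Psi^{(n)}(X)\zeta_\Psi,\zeta_\Psi)\geq 0$, since $\gamma^{(n)}(X)\geq 0$ forces $\gamma_\Psi^{(n)}(X)\geq 0$. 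Hence $\gamma^{(n)}(X)\geq 0$ implies $X\in M_n(\cl S)^+$, which together with complete positivity makes $\gamma$ a unital complete order isomorphism onto its image, equivalently a complete isometry by \cite[1.3.3]{blm}. Finally, $\gamma$ is a weak* continuous isometry out of the dual space $\cl S$, hence carries the weak* compact unit ball of $\cl S$ onto that of $\gamma(\cl S)$; by the Krein--Smulian theorem the image is weak* closed, and a weak* continuous isometric bijection $\gamma:\cl S\to\gamma(\cl S)$ of dual spaces is automatically a weak* homeomorphism.

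I expect the main obstacle to be precisely the passage to matrix levels. A naive construction using only the normal states of $\cl S$ itself (through the GNS representations of the induced states of $\cl A$) produces a $\gamma$ whose matrix order is governed merely by the inequalities $\phi(C^*\cdot X\cdot C)\geq 0$; it therefore recovers only the minimal structure $\omin\mbox{}_{\cl A}(\cl S)$ of Theorem \ref{th_cmina} rather than the given one. Genuinely detecting $M_n(\cl S)^+$ requires normal states on every $M_n(\cl S)$, and the bookkeeping that reconstitutes a single module representation of $\cl S$ out of the $M_n(\cl A)$-representations — the disaggregation $\Pi_\Psi=\rho_\Psi^{(n)}$ and $\Gamma_\Psi=\gamma_\Psi^{(n)}$ — is the technical heart of the argument.
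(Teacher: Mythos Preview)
Your proposal is correct and follows essentially the same route as the paper's proof: both run a module GNS construction at the matrix level $M_n(\cl S)$ using Lemma \ref{l_funct}, disaggregate the resulting $M_n(\cl A)$-representation into an $n$-fold amplification of an $\cl A$-representation, apply Theorem \ref{l_sm} to get complete contractivity, and take a direct sum. The only cosmetic differences are that the paper indexes the direct sum by triples (selfadjoint norm-one element, $n$, $\epsilon$) and argues complete isometry on hermitians directly, whereas you index by all normal states on all $M_n(\cl S)$ and recover the order embedding by Hahn--Banach separation from the weak* closed positive cones; both arguments are equivalent in content.
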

\begin{proof}
The proof is motivated by the proof of \cite[Theorem 1.1]{bm} and 
relies on ideas which go back to the proof of Ruan's Theorem 
\cite[Theorem 2.3.5]{er}. 
Fix $n\in \bb{N}$ and let $\cl B = M_n(\cl A)$. 
By Remark \ref{l_matrdops}, $M_n(\cl S)$ is a dual operator $\cl B$-system.
Let $x\in M_n(\cl S)$ be a selfadjoint element of norm one and $\epsilon \in (0,1)$.
By the proof of Theorem 1.1 given in \cite{bm}, there exists 
a normal state $\phi$ on $M_n(\cl S)$ such that
\begin{equation}\label{eq_>ep}
|\phi(x)| > 1 - \epsilon. 
\end{equation}
Let $\omega : \cl B\to \bb{C}$ be the normal state given by 
$\omega(b) = \phi(b\cdot 1)$, $b\in \cl B$. 
By Lemma \ref{l_funct},
\begin{equation}\label{eq_om}
|\phi(a\cdot y \cdot b)|\leq \omega(aa^*)^{1/2}  \omega(b^*b)^{1/2},
\end{equation}
for all $y\in M_{nm}(\cl S)$ with $\|y\|\leq 1$, 
$a\in M_{1,m}(\cl B)$ and $b\in M_{m,1}(\cl B)$, $m\in \bb{N}$.

Let $\rho : \cl B\to \cl B(H)$ be the GNS representation arising from $\omega$
and $\xi$ be its corresponding unit cyclic vector. 
By \cite[Proposition III.3.12]{t}, $\rho$ is normal. 
It follows that there exists a normal unital *-representation $\theta : \cl A\to \cl B(K)$ 
such that, up to unitary equivalence, $H = K\otimes \bb{C}^n$ and 
$\rho = \theta^{(n)}$. 
Inequality (\ref{eq_om}) implies 
$$|\phi(a^*\cdot y \cdot b)|\leq \|\rho(b)\xi\|\|\rho(a)\xi\| \|y\|, \ \ a,b\in \cl B, y\in M_n(\cl S).$$
Thus, the sesqui-linear form $L_y : (\rho(\cl B)\xi)\times (\rho(\cl B)\xi) \to \bb{C}$
given by 
$$L_y(\rho(b)\xi,\rho(a)\xi) =  \phi(a^*\cdot y \cdot b), \ \ \  a,b\in \cl B,$$
is bounded and has norm not exceeding $\|y\|$.
It follows that there exists a linear operator
$\Phi(y) : \rho(\cl B)\xi\to \rho(\cl B)\xi$ such that
\begin{equation}\label{ope}
(\Phi(y)\rho(b)\xi,\rho(a)\xi) =  \phi(a^*\cdot y\cdot b), \ \ \ a,b\in \cl B,
\end{equation}
and 
\begin{equation}\label{eq_Phiy}
\|\Phi(y)\|\leq \|y\|.
\end{equation}
Since  $\rho(\cl B)\xi$ in dense in $H$, the operator $\Phi(y)$ can be extended 
to an operator on $H$. 
By (\ref{ope}), the map $\Phi : M_n(\cl S)\to \cl B(H)$ is linear and hermitian and, by (\ref{eq_Phiy}), it is contractive.

For $a,b,c,d\in \cl B$, by (\ref{ope}), we have 
$$(\Phi(c^* \cdot y \cdot d)\rho(b)\xi,\rho(a)\xi) 
=  (\rho(c^*)\Phi(y)\rho(d)\rho(b)\xi,\rho(a)\xi).$$
The density of $\rho(\cl B)\xi$ in $H$ now implies that
\begin{equation}\label{eq_mod}
\Phi(c^* \cdot y \cdot d) = \rho(c^*)\Phi(y)\rho(d), \ \ \ c,d\in \cl B, y\in M_n(\cl S).
\end{equation}

We show that $\Phi$ is weak* continuous. Suppose that 
$(y_{\alpha})_{\alpha}\subseteq M_n(\cl S)$ is a net of contractions such that 
$y_{\alpha}\to_{\alpha} y$ in the weak* topology, for some $y\in M_n(\cl S)$. 
Fix $\delta > 0$, $\eta,\zeta\in H$, and choose $a,b\in \cl B$ such that 
$$\|\rho(b)\xi - \eta\| < \delta \  \mbox{ and } \ \|\rho(a)\xi - \zeta\| < \delta.$$
Let ${\alpha}_0$ be such that $|\phi(a^* \cdot y_{\alpha} \cdot b) - \phi(a^* \cdot y \cdot b)| < \delta$ if ${\alpha}\geq {\alpha}_0$. 
For $\alpha\geq \alpha_0$ we have
\begin{eqnarray*}
& & 
|(\Phi(y_{\alpha})\eta,\zeta) - (\Phi(y)\eta,\zeta)|\\ 
& \leq & 
|(\Phi(y_{\alpha})\eta,\zeta) - (\Phi(y_{\alpha})\rho(b)\xi,\rho(a)\xi)|\\ 
& + & 
|(\Phi(y_{\alpha})\rho(b)\xi,\rho(a)\xi) - (\Phi(y)\rho(b)\xi,\rho(a)\xi)|\\
& + & 
|(\Phi(y)\rho(b)\xi,\rho(a)\xi) - (\Phi(y)\eta,\zeta)|\\
& = & 
|(\Phi(y_{\alpha})\eta,\zeta) - (\Phi(y_{\alpha})\rho(b)\xi,\rho(a)\xi)| 
+ 
|\phi(a^* \cdot y_{\alpha} \cdot b) - \phi(a^* \cdot y \cdot b)|\\
& + & 
|(\Phi(y)\rho(b)\xi,\rho(a)\xi) - (\Phi(y)\eta,\zeta)|\\
& \leq & 
|(\Phi(y_{\alpha})\eta,\zeta) - (\Phi(y_{\alpha})\rho(b)\xi,\zeta)|\\ 
& + & |(\Phi(y_{\alpha})\rho(b)\xi,\zeta) - (\Phi(y_{\alpha})\rho(b)\xi,\rho(a)\xi)|
+
|\phi(a^* \cdot y_{\alpha} \cdot b) - \phi(a^* \cdot y \cdot b)|\\
& + & 
|(\Phi(y)\rho(b)\xi,\rho(a)\xi) - (\Phi(y)\eta,\rho(a)\xi)| +  |(\Phi(y)\eta,\rho(a)\xi) - (\Phi(y)\eta,\zeta)|\\
& \leq &
\delta (\|\zeta\| + \|\eta\| + \|\rho(a)\xi\| + \|\rho(b)\xi\| + 1) 
\leq \delta (2\|\zeta\| + 2\|\eta\| + 2 \delta + 1).
\end{eqnarray*}
We thus showed that $\Phi(y_{\alpha})\to_{\alpha} \Phi(y)$ in the 
weak operator topology; since the net $(\Phi(y_{\alpha}))_{\alpha}$ is bounded, 
the convergence is in fact in the weak* topology.
It follows from Shmulyan's Theorem that the map $\Phi$ is weak* continuous.

Identity (\ref{eq_mod}) easily implies that 
there exists a (normal) map $\Psi : \cl S\to \cl B(K)$ such that 
$\Phi = \Psi^{(n)}$. 
Since $\Phi$ is hermitian and contractive, so is $\Psi$. 
By (\ref{eq_mod}) and Theorem \ref{l_sm}, the map $\Phi$, and hence $\Psi$, is completely contractive. 
Now (\ref{eq_mod}) implies
\begin{equation}\label{eq_modpsi}
\Psi(a \cdot z \cdot b) = \theta(a)\Psi(z)\theta(b), \ \ \ \ z\in \cl S, a,b\in \cl A.
\end{equation}
By (\ref{ope}), 
$$1 = \phi(1) = (\Phi(1)\xi,\xi)\leq  \|\Phi(1)\|\|\xi\|^2 \leq 1.$$
Thus $\Phi(1)\xi = \xi$; by (\ref{eq_mod}),
$$\Phi(1)\rho(b)\xi = \rho(b)\Phi(1)\xi = \rho(b)\xi, \ \ \ \ b\in \cl B,$$
and since $\xi$ is cyclic for $\rho$, we conclude that $\Phi(1) = 1$.
It follows that $\Psi(1) = 1$. 

The map $\Psi$, constructed in the previous paragraph, 
depends on the element $x\in M_n(\cl S)$, and on the chosen $\epsilon$. 
Note that, by (\ref{eq_>ep}) and (\ref{ope}), $\left\|\Psi^{(n)}(x)\right\| > 1 - \epsilon$. 
Let $\gamma$ (resp. $\pi$) be the direct sum of the maps $\Psi$ 
(resp. $\theta$) as above, 
over all selfadjoint $x\in M_n(\cl S)$ with norm one, all $n\in \bb{N}$, and all $\epsilon \in (0,1)$. 
The map $\gamma$ is unital, weak* continuous, hermitian, and has the property that
if $x\in M_n(\cl S)$ is selfadjoint then $\|x\| = 1$ implies $\left\|\gamma^{(n)}(x)\right\| = 1$. 
This easily yields that $\gamma$ is completely positive and has a completely positive inverse.
As in the proof of \cite[Theorem 1.1]{bm}, 
the image of $\gamma$ is weak* closed and $\gamma$ is a weak* homeomorphism 
onto its range.
In addition, $\pi$ is a normal *-representation as a direct sum of such. 
Condition (\ref{eq_modc}) follows from (\ref{eq_modpsi}). 
\end{proof}


\section{The dual extremal operator $\cl A$-system structures}\label{s_deoass}

In this section, we study dual versions of the extremal operator $\cl A$-system structures
considered in Section \ref{s_eoss}. 
We start with the definition of a dual AOU space.
Note first that, if $(V,V^+,e)$ is an AOU space then the expression
$$\|v\| = \sup\{|f(v)| : f \mbox{ a state on } V\}$$
defines a norm on $V$, called the \emph{order norm} \cite{pt};
in the sequel we equip $V$ with its order norm. 
If $V$ is a dual Banach space, the weak* continuous functionals on $V$ will be called 
\emph{normal functionals}.

\begin{definition}\label{d_daous}
A \emph{dual AOU space} is an AOU space $(V,V^+,e)$, which is also a dual Banach space, and
\begin{itemize}
\item[(i)] the involution is weak* continuous;

\item[(ii)] $V^+$ is weak* closed, and 

\item[(iii)] for $v\in V$, $\|v\| = \sup\{|f(v)| : f \mbox{ a normal state on } V\}$, and 
the weak* topology of $V$ is determined by normal states of $V$. 
\end{itemize}
\end{definition}

Suppose that $(V,V^+,e)$ is a dual AOU space, and let $V_*$ be the predual of $V$. 
Note that the algebraic tensor product $V_*\otimes M_n^*$ 
can be canonically embedded into the dual of $M_n(V)$.
By \emph{the weak* topology} on $M_n(V)$ we will mean the topology arising from this
duality; 
thus, $(x_{i,j}^{\alpha})\to_{\alpha} (x_{i,j})$ if and only if $x_{i,j}^{\alpha}\to_{\alpha} x_{i,j}$
for every $i,j$.

\begin{definition}
Let $\cl A$ be a W*-algebra. A dual AOU space $(V,V^+,e)$
will be called \emph{dual AOU $\cl A$-space} if 
\begin{itemize}
\item[(i)]  $(V,V^+,e)$ is an AOU $\cl A$-space, and

\item[(ii)] the left (and hence the right) $\cl A$-module 
action is separately weak* continuous.
\end{itemize}
\end{definition}

\begin{definition}\label{d_dopass}
Let $\cl A$ be a W*-algebra and $(V,V^+,e)$ be a dual AOU $\cl A$-space.
A matrix ordering $(C_n)_{n\in \bb{N}}$ on $V$ 
will be called a \emph{dual operator $\cl A$-system structure} on $V$
if $(V,(C_n)_{n\in \bb{N}},e)$ is a dual operator $\cl A$-system whose weak* topology coincides with that of $V$, and 
$C_1 = V^+$. 
\end{definition}

\begin{theorem}\label{th_weakscac}
Let $\cl A$ be a W*-algebra,
$(V,V^+,e)$ be a dual AOU $\cl A$-space and $(C_n)_{n\in \bb{N}}$ be 
an operator $\cl A$-system structure on $V$. 
The following are equivalent:

(i) \ $(C_n)_{n\in \bb{N}}$ is a dual operator $\cl A$-system structure on $V$;

(ii) $C_n$ is weak* closed for each $n\in \bb{N}$.
\end{theorem}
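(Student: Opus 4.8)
The plan is to prove the two implications separately, using Theorem~\ref{th_repdoas} for the forward direction and a bipolar/separation argument followed by a construction modelled on the proof of Theorem~\ref{th_repdoas} for the reverse one. Throughout write $\cl S := (V,(C_n)_{n\in\bb{N}},e)$.

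For (i) $\Rightarrow$ (ii), assume $(C_n)_{n\in\bb{N}}$ is a dual operator $\cl A$-system structure, so that $\cl S$ is a dual operator $\cl A$-system. First I would apply Theorem~\ref{th_repdoas} to obtain a unital complete order embedding $\gamma:\cl S\to\cl B(H)$ that is a weak* homeomorphism onto the weak* closed operator system $\gamma(\cl S)$. Since $M_n(\cl S)$ carries the entry-wise weak* topology (Remark~\ref{r_sdu}), the amplification $\gamma^{(n)}$ is a weak* homeomorphism of $M_n(\cl S)$ onto the weak* closed space $M_n(\gamma(\cl S))\subseteq\cl B(H^n)$, carrying $C_n = M_n(\cl S)^+$ onto $M_n(\gamma(\cl S))\cap\cl B(H^n)^+$. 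The latter is an intersection of two weak* closed sets, hence weak* closed; transporting back along $\gamma^{(n)}$ shows $C_n$ is weak* closed in $M_n(\cl S)$. Because the weak* topology of $\cl S$ coincides with that of $V$, the entry-wise weak* topology on $M_n(\cl S)$ agrees with the weak* topology on $M_n(V)$ declared after Definition~\ref{d_daous}, and so $C_n$ is weak* closed in $M_n(V)$, as required.

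For (ii) $\Rightarrow$ (i), note first that, since $V$ is a dual AOU $\cl A$-space and $(C_n)_{n\in\bb{N}}$ is already an operator $\cl A$-system structure, conditions (i) and (iii) of Definition~\ref{d_dasys} hold automatically; the whole task is to exhibit $\cl S$ as a dual operator space whose weak* topology is the given one on $V$. The decisive step is to recover each cone from normal states. Fixing $n$, the normal functionals on $M_n(V)$ are exactly the elements of $V_*\otimes M_n^*$, and I would invoke the bipolar theorem in the duality $\langle M_n(V),V_*\otimes M_n^*\rangle$: as $C_n$ is a weak* closed convex cone it equals its bidual cone, so $X\in C_n$ if and only if $f(X)\geq 0$ for every normal positive functional $f$ on $M_n(\cl S)$. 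Normalising those $f$ with $f(e_n)=1$, one concludes that $X\in C_n$ precisely when $s(X)\geq 0$ for every normal state $s$ of $M_n(\cl S)$, and hence that the operator system norm on each $M_n(\cl S)$ is determined by normal states.

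With sufficiently many normal states in hand, the remaining step runs parallel to the proof of Theorem~\ref{th_repdoas}. For a selfadjoint $x\in M_n(\cl S)$ of norm one and $\epsilon\in(0,1)$ I would select a normal state $\phi$ on $M_n(\cl S)$ with $|\phi(x)|>1-\epsilon$, pass through Lemma~\ref{l_funct} and the associated normal GNS representation of $M_n(\cl A)$, and build a normal unital completely positive map $\Psi:\cl S\to\cl B(K)$ with $\|\Psi^{(n)}(x)\|>1-\epsilon$. Letting $\gamma$ be the direct sum of all such $\Psi$ over all $n$, all norm-one selfadjoint $x$, and all $\epsilon$, yields a unital complete order embedding that is weak* continuous with weak* closed range and is a weak* homeomorphism onto it, exhibiting $\cl S$ as a dual operator system; that this weak* topology coincides with that of $V$ follows because each matrix entry of $\gamma$ is, by construction, a normal functional on $V$. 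I expect the main obstacle to lie precisely in the reduction carried out in the previous paragraph, namely in verifying that the bipolar computation yields genuine normal \emph{states} simultaneously norming every matrix level (rather than merely normal positive functionals at a fixed level); once this is secured, the representation-theoretic step is a routine adaptation of Theorem~\ref{th_repdoas}, and the separate weak* continuity of the module action supplies condition (iii) of Definition~\ref{d_dasys}.
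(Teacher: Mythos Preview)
Your forward direction (i)$\Rightarrow$(ii) is the same as the paper's. For (ii)$\Rightarrow$(i) your strategy is correct but differs substantially from the paper's. The paper does \emph{not} re-run the GNS machinery of Theorem~\ref{th_repdoas}; instead it forms the direct sum
\[
J(x)=\bigoplus_{n}\bigoplus_{\phi\in\cl P_n}\phi(x),\qquad
\cl P_n=\{\phi:V\to M_n\ \text{normal unital completely positive}\},
\]
with finite-dimensional targets. Complete order injectivity of $J$ is then obtained by a Hahn--Banach separation in the locally convex space $M_n(V)$: a normal functional $s$ with $s(C_n)\subseteq\bb R^+$ and $s(X)<0$ is turned, via \cite[Theorem~6.1]{Pa} and the rank-reduction trick from \cite[Theorem~13.1]{Pa}, into an element of some $\cl P_k$ witnessing that $J^{(n)}(X)\not\geq 0$. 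The weak*-homeomorphism step then uses condition~(iii) of Definition~\ref{d_daous} directly, since every normal state already occurs as a summand of $J$.

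Your route---bipolar to get norming normal states at every matrix level, then a verbatim rerun of the construction in Theorem~\ref{th_repdoas}---works, but two points need care. First, Lemma~\ref{l_funct} is stated for dual operator $\cl A$-systems, which is precisely what you do not yet know; you must observe that its \emph{proof} only uses Theorem~\ref{th_repa} together with separate weak* continuity of the module action, both of which you have. Second, your justification that $\gamma$ is a weak* homeomorphism (``each matrix entry of $\gamma$ is a normal functional on $V$'') is imprecise: the summands $\Psi$ take values in $\cl B(K)$, not in $\bb C$. The clean argument is that $\gamma$ is a weak* continuous isometric bijection onto the weak* closed space $\gamma(V)$, whence its preadjoint $\gamma_*:(\gamma(V))_*\to V_*$ is an injective isometry with dense range, hence a surjective isometry; alternatively, augment the direct sum by all normal states of $V$ and invoke condition~(iii) of Definition~\ref{d_daous} as the paper does. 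The paper's approach is shorter and avoids both of these verifications by working with maps into $M_n$ from the outset.
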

\begin{proof}
(i)$\Rightarrow$(ii) 
Let $\cl S = (V,(C_n)_{n\in \bb{N}},e)$. 
By Theorem \ref{th_repdoas}, 
there exist a Hilbert space $H$ and a 
complete order embedding $\gamma : \cl S\to \cl B(H)$ such that $\gamma(\cl S)$ is weak* closed and 
$\gamma$ is a weak* homeomorphism. 
Clearly, $M_n(\gamma(\cl S))^+$ is weak* closed in $M_n(\cl B(H))$. 
Note that the weak* topology on 
$M_n(\cl B(H)) = \cl B(H^n)$ is given by entry-wise weak* convergence. 
On the other hand, since $\gamma$ is a weak* homeomorphism, 
we have that if $((x_{i,j}^{\alpha}))_{\alpha}\subseteq M_n(V)$ and $(x_{i,j})\in M_n(V)$ 
then $(x_{i,j}^{\alpha})\to_{\alpha} (x_{i,j})$ weak* if and only if $\gamma(x_{i,j}^{\alpha})\to_{\alpha}\gamma(x_{i,j})$
for every $i,j$. It follows that $C_n$ is weak* closed.

(ii)$\Rightarrow$(i)
Let $\cl S = (V,(C_n)_{n\in \bb{N}},e)$. 
For each $n$, let 
$$\cl P_n = \left\{\phi : V \to M_n \ : \ \mbox{ weak* continuous unital completely positive map}\right\}.$$
Let $H = \oplus_{n\in \bb{N}}\oplus_{\phi\in \cl P_n} \bb{C}^n$ and let 
$J : V\to \cl B(H)$ be the map given by 
$J(x) = \oplus_{n\in \bb{N}}\oplus_{\phi\in \cl P_n} \phi(x)$. 
It is clear that $J$ is a weak* continuous completely positive map. 
In addition, by condition (iii) from Definition \ref{d_daous}, $J$ is isometric.

To show that $J$ is a complete order isomorphism, 
assume that $J^{(n)}(X)\geq 0$ for some $X = (x_{i,j})\in M_n(V)_h$
and that, by way of contradiction, $X$ does not belong to $C_n$.
The space $M_n(V)$, equipped with the topology of weak* convergence, 
is a locally convex topological vector space. 
By a geometric form of the Hahn-Banach Theorem,
there exists a functional $s : M_n(V)\to \bb{C}$, 
continuous with respect to the topology of entry-wise weak* convergence,
such that $s(C_n)\subseteq \bb{R}^+$ but $s(X) < 0$.
By \cite[Theorem 6.1]{Pa}, the map $\phi_s : V \to M_n$, given by 
$\phi_s(x) = (s_{i,j}(x))_{i,j}$ (and where $s_{i,j}(x) = s(E_{i,j}\otimes x)$), 
is completely positive. It is clear that $\phi_s$ is normal. 
In addition, $\phi_s^{(n)}$ does not map $X$ to a positive matrix. 
After normalisation, we may assume that $\phi_s$ is contractive.

Let $P = \phi_s(e)$; then $P$ is a positive contraction. 
Assume that ${\rm rank}(P) = k$ and let $Q$ be the projection onto $\ker(P)^{\perp}$.
It was shown in the proof of \cite[Theorem 13.1]{Pa} 
that, if $A\in M_{n,k}$ and $B\in M_{k,n}$ are matrices such that $A^* P A = I_k$ and  
$AB = Q$, and $\psi$ is the mapping given by $\psi(x) = A^* \phi_s(x) A$, then 
$\psi$ is a (unital completely positive) map such that $\psi^{(n)}(X)$ is not positive. 
Clearly, $\psi$ is normal, and hence an element of $\cl P_k$. This contradicts the fact that $J^{(n)}(X)\geq 0$.

To show that $J$ is a weak* homeomorphism, suppose that 
$J(x_{\alpha})\to_{\alpha} J(x)$ in the weak* topology, 
for some net $(x_{\alpha})\subseteq V$ and some element $x\in V$. 
Then $\phi(x_{\alpha})\to \phi(x)$ for all normal positive functionals $\phi$. 
By condition (iii) of Definition \ref{d_daous}, 
$x_{\alpha}\to x$ in the weak* topology of $V$. 

We finally note that $J(V)$ is weak* closed in $\cl B(H)$. Suppose that 
$J(x_{\alpha})\to T$, where $T\in \cl B(H)$ and 
$(x_{\alpha})_{\alpha}\subseteq V$ is a net such that the net $J(x_{\alpha})_{\alpha}$ is bounded.
Since $J$ is an isometry, $(x_{\alpha})_{\alpha}$ is also bounded, and hence has a  
subnet $(x_{\beta})_{\beta}$, weak* convergent to an element of $V$, say $x$. 
Since $J$ is weak* continuous, we conclude that 
$T = \lim_{\beta} J(x_{\beta}) = J(x)$, and hence $T\in J(V)$.
By the Krein-Smulyan, $J(V)$ is weak* closed. 

By the previous paragraphs, the weak* topology of $V$ 
coincides with the weak* topology of the operator system $\cl S$. 
It now follows that the $\cl A$-module operations on $\cl S$ are separately weak* continuous;
thus, $\cl S$ is a dual operator $\cl A$-system and the proof is complete. 
\end{proof}

As the next two statements show, if $(V,V^+,e)$ is a dual AOU $\cl A$-space 
then the minimal operator $\cl A$-system structure defined in Section \ref{s_eoss} is 
automatically a dual minimal operator $\cl A$-system structure.

\begin{theorem}\label{th_maxdual}
Let $\cl A$ be a W*-algebra and $(V,V^+,e)$ be a dual AOU $\cl A$-space.
Then $(C_n^{\min}(V;\cl A))_{n\in \bb{N}}$ is a dual operator $\cl A$-system structure.
\end{theorem}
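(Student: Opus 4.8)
We must show that, for a dual AOU $\cl A$-space $(V,V^+,e)$, the minimal operator $\cl A$-system structure $(C_n^{\min}(V;\cl A))_{n\in\bb{N}}$ is in fact a \emph{dual} operator $\cl A$-system structure. By Theorem \ref{th_weakscac}, since $(C_n^{\min}(V;\cl A))_{n\in\bb{N}}$ is already known to be an operator $\cl A$-system structure (Theorem \ref{th_cmina}), it suffices to verify that each cone $C_n^{\min}(V;\cl A)$ is weak* closed in $M_n(V)$, where the weak* topology on $M_n(V)$ is the topology of entry-wise weak* convergence.

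**The plan.** The strategy is to exploit the defining intersection form of $C_n^{\min}(V;\cl A)$ directly. Recall
$$C_n^{\min}(V;\cl A) = \{X\in M_n(V)_h : C^*\cdot X\cdot C\in V^+ \mbox{ for all } C\in M_{n,1}(\cl A)\}.$$
First I would observe that $M_n(V)_h$ is weak* closed, since the involution on $V$ is weak* continuous by Definition \ref{d_daous}(i), and hence so is the entry-wise involution on $M_n(V)$. Thus it remains to intersect with $M_n(V)_h$ a family of preimages. For each fixed $C\in M_{n,1}(\cl A)$, consider the map $\Lambda_C : M_n(V)\to V$ given by $\Lambda_C(X) = C^*\cdot X\cdot C$. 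Then
$$C_n^{\min}(V;\cl A) = M_n(V)_h \cap \bigcap_{C\in M_{n,1}(\cl A)} \Lambda_C^{-1}(V^+).$$
Since an arbitrary intersection of weak* closed sets is weak* closed, the whole argument reduces to two ingredients: that $V^+$ is weak* closed, and that each $\Lambda_C$ is weak* continuous.

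**The two ingredients.** The first is immediate from Definition \ref{d_daous}(ii). For the second, fix $C = (a_i)_{i=1}^n\in M_{n,1}(\cl A)$; then for $X = (x_{i,j})\in M_n(V)$ we have $\Lambda_C(X) = \sum_{i,j=1}^n a_i^* \cdot x_{i,j}\cdot a_j$. This is a finite linear combination of the entry maps $X\mapsto x_{i,j}$ followed by the fixed two-sided module action $v\mapsto a_i^*\cdot v\cdot a_j$. The entry maps are weak* continuous by the definition of the weak* topology on $M_n(V)$. The key point is that for each fixed pair $a,b\in\cl A$ the single-element map $v\mapsto a^*\cdot v\cdot b$ on $V$ is weak* continuous; this follows from the separate weak* continuity of the $\cl A$-module action guaranteed by the definition of a dual AOU $\cl A$-space (together with the remark following Definition \ref{d_dasys} that separate weak* continuity of the left action plus weak* continuity of the involution yields separate, and for fixed scalars two-sided, weak* continuity). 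Composing and summing weak* continuous maps gives that $\Lambda_C$ is weak* continuous, so each $\Lambda_C^{-1}(V^+)$ is weak* closed.

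**Anticipated obstacle.** I expect the only subtle point to be the justification that the fixed two-sided module map $v\mapsto a^*\cdot v\cdot b$ is genuinely weak* continuous rather than merely separately continuous in the module variables: the separate weak* continuity in Definition of a dual AOU $\cl A$-space is a statement about the \emph{bilinear} action in the pair $(a,v)$, whereas here $a$ and $b$ are held fixed and we vary $v$. However, fixing one slot of a separately weak* continuous bilinear map yields a weak* continuous linear map in the remaining slot, so this is exactly what is needed, and no appeal to joint continuity is required. Once this is in hand, the conclusion that $C_n^{\min}(V;\cl A)$ is weak* closed is formal, and Theorem \ref{th_weakscac} completes the proof.
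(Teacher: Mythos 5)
Your proof is correct and follows essentially the same route as the paper: the paper's (very terse) argument is precisely that separate weak* continuity of the $\cl A$-module actions together with weak* closedness of $V^+$ forces each $C_n^{\min}(V;\cl A)$ to be weak* closed, after which Theorem \ref{th_weakscac} applies. Your write-up simply fills in the details (the intersection decomposition, weak* continuity of each $\Lambda_C$, and weak* closedness of $M_n(V)_h$) that the paper leaves implicit, and your handling of the ``fixed slot'' continuity point is exactly right.
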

\begin{proof}
Since the $\cl A$-module actions on $V$ are weak* continuous, $C_n^{\min}(V;\cl A)$ is weak* closed for each $n\in \bb{N}$.
By Theorem \ref{th_weakscac}, $(C_n^{\min}(V;\cl A))_{n\in \bb{N}}$ is a dual operator $\cl A$-system structure.
\end{proof}

\begin{theorem}\label{th_numin}
Let $\cl A$ be a W*-algebra and $(V,V^+,e)$ be a dual AOU $\cl A$-space. 

(i) Suppose that $\cl S$ is a dual operator $\cl A$-system 
and $\phi : \cl S\to V$ is a normal positive $\cl A$-bimodule map. 
Then $\phi$ is completely positive as a map from $\cl S$ into $\omin_{\cl A}(V)$. 

(ii) If $\cl T$ is a dual operator $\cl A$-system with underlying space $V$ and positive cone $V^+$, 
such that for every dual operator $\cl A$-system $\cl S$,
every normal positive $\cl A$-bimodule map $\phi : \cl S\to \cl T$ is completely positive, 
then there exists a unital normal $\cl A$-bimodule map $\psi : \cl T \to \omin_{\cl A}(V)$ 
that is a complete order isomorphism and a weak* homeomorphism.
\end{theorem}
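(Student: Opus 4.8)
The plan is to mirror the two-part structure of Theorem~\ref{th_umin}, upgrading each part with the appropriate normality and weak* continuity data that the dual setting provides. For part (i), I would follow exactly the computation in the proof of Theorem~\ref{th_umin}(i): given a normal positive $\cl A$-bimodule map $\phi : \cl S\to V$, I take $X = (x_{i,j})\in M_n(\cl S)^+$ and $C = (a_i)_{i=1}^n\in M_{n,1}(\cl A)$, and compute
\[
C^*\cdot \phi^{(n)}(X)\cdot C = \phi(C^*\cdot X\cdot C)\in V^+,
\]
using only positivity and the bimodule property; this shows $\phi^{(n)}(M_n(\cl S)^+)\subseteq C_n^{\min}(V;\cl A)$, so $\phi$ is completely positive into $\omin_{\cl A}(V)$. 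The normality of $\phi$ plays no role in the complete positivity itself here — it is the same algebraic identity as in the non-dual case — so part (i) should go through essentially verbatim, with normality recorded only for its later use in part (ii).

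For part (ii), I would reproduce the deduction of Theorem~\ref{th_umin}(ii), but now check that the maps involved are normal so that the hypotheses of part (i) and of $\cl T$'s universal property actually apply. By Theorem~\ref{th_maxdual}, $\omin_{\cl A}(V)$ is genuinely a dual operator $\cl A$-system whose weak* topology agrees with that of $V$, so the identity map $\id : \omin_{\cl A}(V)\to V$ is a normal positive $\cl A$-bimodule map, and likewise $\id : \cl T\to V$ is normal positive and $\cl A$-bimodular since both $\cl T$ and $\omin_{\cl A}(V)$ carry the weak* topology of $V$ on the common underlying space. Applying $\cl T$'s hypothesis to the dual operator $\cl A$-system $\cl S = \omin_{\cl A}(V)$ gives that $\id : \omin_{\cl A}(V)\to \cl T$ is completely positive; applying part (i) to $\id : \cl T\to V$ gives that $\id : \cl T\to \omin_{\cl A}(V)$ is completely positive. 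Taking $\psi = \id$ then yields the desired complete order isomorphism, which is automatically unital, $\cl A$-bimodular, and a weak* homeomorphism because it is the identity on $V$ and both structures induce the same weak* topology.

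The only real subtlety I anticipate is the normality bookkeeping in part (ii): one must be sure that every map fed into part (i) or into the hypothesis on $\cl T$ is normal, and this hinges on the fact, guaranteed by Definition~\ref{d_dopass} and Theorem~\ref{th_maxdual}, that a dual operator $\cl A$-system structure on $V$ must induce precisely the ambient weak* topology of $V$. Given that, the identity map between any two such structures is trivially a weak* homeomorphism, so normality of the various identity maps is immediate and the argument reduces cleanly to the algebraic core already established in Theorem~\ref{th_umin}. I would therefore expect the proof to be short, with the main (though mild) obstacle being the verification that $\cl T$'s weak* topology coincides with $V$'s, which is exactly what Definition~\ref{d_dopass} builds in.
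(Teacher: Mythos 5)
Your proposal is correct and takes essentially the same route as the paper: the paper derives part (i) as a direct consequence of Theorem~\ref{th_umin}(i), and proves part (ii) by the same identity-map argument as in Theorem~\ref{th_umin}(ii), taking $\psi = \id$. The only difference is that you spell out the normality bookkeeping (via Theorem~\ref{th_maxdual} and Definition~\ref{d_dopass}) that the paper compresses into the phrase ``a standard argument, similar to the one given in the proof of Theorem~\ref{th_umin}(ii).''
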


\begin{proof}
(i) is a direct consequence of Theorem \ref{th_umin} (i). The proof of (ii) follows 
by a standards argument, similar to the one given in the proof of Theorem \ref{th_umin} (ii).
\end{proof}

In the remainder of the section, we consider the dual maximal operator $\cl A$-system structure.
For a W*-algebra $\cl A$ and a dual AOU $\cl A$-space $(V,V^+,e)$, 
set 
$$W_n^{\max}(V;\cl A) = \overline{C_n^{\max}(V;\cl A)}^{w^*}, \ \ \ \ n\in \bb{N}.$$

\begin{theorem}\label{th_wmaxa}
Let $\cl A$ be a W*-algebra and $(V,V^+,e)$ be a dual AOU $\cl A$-space.
Then $(W_n^{\max}(V;\cl A))_{n\in \bb{N}}$ is a dual operator $\cl A$-system structure on $V$. 
Moreover, if $(P_n)_{n\in \bb{N}}$ is a dual operator $\cl A$-system structure on $V$ then 
$W_n^{\max}(V;\cl A)\subseteq P_n$ for each $n\in \bb{N}$.
\end{theorem}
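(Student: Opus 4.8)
The plan is to deduce the statement from results already available, principally Theorem \ref{th_weakscac} (which reduces the property of being a dual operator $\cl A$-system structure to weak* closedness of the cones) and Theorem \ref{th_cmaxa} (the maximality of the cones $C_n^{\max}(V;\cl A)$ among operator $\cl A$-system structures). The essential task is therefore to verify that the weak*-closed family $(W_n^{\max}(V;\cl A))_{n\in\bb{N}}$ is itself an operator $\cl A$-system structure; once this is done, Theorem \ref{th_weakscac} immediately promotes it to a dual operator $\cl A$-system structure, and the minimality clause will follow almost at once.

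First I would check the algebraic axioms. Since $C_n^{\max}(V;\cl A)$ is a convex cone, its weak* closure $W_n^{\max}(V;\cl A)$ is again convex and stable under multiplication by nonnegative scalars (that operation being weak* continuous entrywise), hence a convex cone. For saliency I would use that $C_n^{\max}(V;\cl A)\subseteq C_n^{\min}(V;\cl A)$ (established inside the proof of Theorem \ref{th_cmaxa}) together with the weak* closedness of $C_n^{\min}(V;\cl A)$ (proof of Theorem \ref{th_maxdual}); taking weak* closures gives $W_n^{\max}(V;\cl A)\subseteq C_n^{\min}(V;\cl A)$, whence $W_n^{\max}(V;\cl A)\cap(-W_n^{\max}(V;\cl A))\subseteq C_n^{\min}(V;\cl A)\cap(-C_n^{\min}(V;\cl A))=\{0\}$. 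The identity $W_1^{\max}(V;\cl A)=V^+$ follows from $C_1^{\max}(V;\cl A)=V^+$ (part of Theorem \ref{th_cmaxa}) and the weak* closedness of $V^+$ required of a dual AOU space. For $\cl A$-compatibility I would exploit the separate weak* continuity of the module action: for fixed $A\in M_{m,n}(\cl A)$ the map $X\mapsto A^*\cdot X\cdot A$ is weak* continuous on $M_m(V)$, since each entry of its image is a finite sum of terms $a^*\cdot x_{i,j}\cdot a'$ with $a,a'$ fixed. Approximating $X\in W_m^{\max}(V;\cl A)$ weak* by elements of $C_m^{\max}(V;\cl A)$ and using the $\cl A$-compatibility of the latter then places $A^*\cdot X\cdot A$ in the weak* closure $W_n^{\max}(V;\cl A)$.

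It remains to show that $e_n$ is an Archimedean matrix order unit for $W_n^{\max}(V;\cl A)$, and this is the one point where weak* closedness does genuine work. That $e_n$ is an order unit is inherited from $C_n^{\max}(V;\cl A)\subseteq W_n^{\max}(V;\cl A)$. For the Archimedean property, suppose $Y+re_n\in W_n^{\max}(V;\cl A)$ for all $r>0$; since $re_n\to 0$ in norm, hence weak*, as $r\to 0^+$, the net $(Y+re_n)_{r>0}$ converges weak* to $Y$, and weak* closedness of $W_n^{\max}(V;\cl A)$ yields $Y\in W_n^{\max}(V;\cl A)$. With the axioms verified, Theorem \ref{th_weakscac} shows $(W_n^{\max}(V;\cl A))_{n\in\bb{N}}$ is a dual operator $\cl A$-system structure. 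For minimality, any dual operator $\cl A$-system structure $(P_n)_{n\in\bb{N}}$ is in particular an operator $\cl A$-system structure, so $C_n^{\max}(V;\cl A)\subseteq P_n$ by Theorem \ref{th_cmaxa}; since each $P_n$ is weak* closed by Theorem \ref{th_weakscac}, passing to weak* closures gives $W_n^{\max}(V;\cl A)\subseteq P_n$. I expect the only mildly delicate point to be the weak* continuity of the matricial module map invoked for $\cl A$-compatibility, with the remaining steps reducing cleanly to the cited results.
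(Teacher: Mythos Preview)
Your proposal is correct and follows essentially the same route as the paper: verify the operator $\cl A$-system axioms for the weak*-closed cones (using separate weak* continuity for $\cl A$-compatibility and weak* closedness for the Archimedean property), then invoke Theorem~\ref{th_weakscac}, and deduce minimality from Theorem~\ref{th_cmaxa} together with weak* closedness of the $P_n$. Your explicit check of saliency via the inclusion $W_n^{\max}(V;\cl A)\subseteq C_n^{\min}(V;\cl A)$ is a small addition that the paper leaves implicit.
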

\begin{proof}
By Theorem \ref{th_cmaxa}, $(C_n^{\max}(V;\cl A))_{n\in \bb{N}}$ is an 
operator system $\cl A$-structure on $V$. It follows by the 
separate weak* continuity of the $\cl A$-module actions on $V$ 
and the definition of the $M_n(\cl A$)-module operations on $M_n(V)$ (see (\ref{eq_matrixmod})) 
that the family $(W_n^{\max}(V;\cl A))_{n\in \bb{N}}$ is $\cl A$-compatible. 

Since the element $e$ is a matrix order unit for $(D_n^{\max}(V;\cl A))_{n\in \bb{N}}$ 
(see Proposition \ref{minmax}) and $D_n^{\max}(V;\cl A)\subseteq W_n^{\max}(V;\cl A)$ for each $n\in \bb{N}$,
$e$ is a matrix order unit for $(W_n^{\max}(V;\cl A))_{n\in \bb{N}}$. 
To show that $e$ is an Archimedean matrix order unit for 
$(W_n^{\max}(V;\cl A))_{n\in \bb{N}}$, suppose that 
$X\in M_n(V)$ is such that $X + re_n\in W_n^{\max}(V;\cl A)$ for all $r > 0$. 
Since $X + re_n\to_{r\to 0} X$ in the weak* topology and $W_n^{\max}(V;\cl A)$ is weak* closed, 
$X\in W_n^{\max}(V;\cl A)$. 

It follows that $(V, (W_n^{\max}(V;\cl A))_{n\in \bb{N}}, e)$ is an operator $\cl A$-system; 
by condition (ii) of Definition \ref{d_daous},
$V^+ = W_1^{\max}(V;\cl A)$.
Since its cones are weak* closed, Theorem \ref{th_weakscac} implies that 
it is a dual operator $\cl A$-system. 

Suppose that $(P_n)_{n\in \bb{N}}$ is a dual operator $\cl A$-system structure on $V$. 
Fix $n\in \bb{N}$. By Theorem \ref{th_cmaxa}, $C_n^{\max}(V;\cl A)\subseteq P_n$. 
By Theorem \ref{th_weakscac}, $P_n$ is weak* closed. 
It follows that $W_n^{\max}(V;\cl A)\subseteq P_n$. 
\end{proof}

We denote by $\omax_{\cl A}^{w^*}(V)$ the operator system $(V,(W_n^{\max}(V;\cl A))_{n\in \bb{N}},e)$.

\begin{theorem}\label{th_nacp}
Let $\cl A$ be a W*-algebra and $(V,V^+,e)$ be a dual AOU $\cl A$-space. 

(i) Suppose that $\cl S$ is a dual operator $\cl A$-system 
and $\phi : V\to \cl S$ is a normal positive $\cl A$-bimodule map. 
Then $\phi$ is completely positive as a map from $\omax_{\cl A}^{w^*}(V)$ into $\cl S$. 

(ii) If $\cl T$ is a dual operator $\cl A$-system with underlying space $V$ and positive cone $V^+$, 
such that for every dual operator $\cl A$-system $\cl S$,
every normal positive $\cl A$-bimodule map $\phi : \cl T\to \cl S$ is completely positive, 
then there exists a unital normal $\cl A$-bimodule map $\psi : \cl T\to \omax^{w^*}_{\cl A}(V)$ 
that is a complete order isomorphism and a weak* homeomorphism.
\end{theorem}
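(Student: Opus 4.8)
The plan is to prove Theorem \ref{th_nacp} as a dual (weak*) analogue of Theorem \ref{th_acp}, leaning on the weak* closure built into the definition of $W_n^{\max}(V;\cl A)$. For part (i), I would start exactly as in Theorem \ref{th_acp}(i): given a normal positive $\cl A$-bimodule map $\phi : V\to \cl S$, the modularity of $\phi$ together with the definition of $D_n^{\max}(V;\cl A)$ immediately gives $\phi^{(n)}(D_n^{\max}(V;\cl A))\subseteq M_n(\cl S)^+$. The new ingredient needed here is to pass to the weak* closure. Since $\phi$ is normal, each amplification $\phi^{(n)}$ is weak* continuous, and since $\cl S$ is a dual operator system, $M_n(\cl S)^+$ is weak* closed. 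Therefore $\phi^{(n)}$ maps the weak* closure $W_n^{\max}(V;\cl A) = \overline{C_n^{\max}(V;\cl A)}^{w^*}$ into $M_n(\cl S)^+$; combined with $C_n^{\max}(V;\cl A)\subseteq W_n^{\max}(V;\cl A)$ (already $\phi^{(n)}(C_n^{\max})\subseteq M_n(\cl S)^+$ by Theorem \ref{th_acp}(i)), this shows $\phi$ is completely positive as a map out of $\omax^{w^*}_{\cl A}(V)$.

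The key technical point to verify carefully is that $\phi^{(n)}$ really is weak* continuous on $M_n(V)$. I would argue this from the fact that $\phi : V\to \cl S$ is normal together with the description of the weak* topology on $M_n(V)$ (entrywise weak* convergence, as recorded just before Definition \ref{d_dopass}) and the corresponding description of the weak* topology on $M_n(\cl S)$ from Remark \ref{r_sdu}. Since a matrix $(x_{i,j})$ converges weak* precisely when each entry does, and $\phi$ sends weak* convergent nets to weak* convergent nets entrywise, $\phi^{(n)}$ is continuous for the entrywise topologies, which is what is required.

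For part (ii), I would follow the template of Theorem \ref{th_umin}(ii) and Theorem \ref{th_acp}(ii), now in the dual category. Take $\psi = \id$. The identity $\id : V\to \omax^{w^*}_{\cl A}(V)$ is a unital normal positive $\cl A$-bimodule map (normality holds because both sides carry the same weak* topology, that of $V$, by Theorem \ref{th_wmaxa}), so by part (i) applied with $\cl S = \omax^{w^*}_{\cl A}(V)$ the map $\id : \cl T\to \omax^{w^*}_{\cl A}(V)$ is completely positive. Conversely, the identity $\id : \cl T\to \omax^{w^*}_{\cl A}(V)$ viewed the other way, namely $\id : \omax^{w^*}_{\cl A}(V)\to \cl T$, is a normal positive $\cl A$-bimodule map into the dual operator $\cl A$-system $\cl T$, hence completely positive by the hypothesis on $\cl T$. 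Thus $\psi=\id$ is a complete order isomorphism, and since it is the identity on the common weak* topology it is automatically a weak* homeomorphism.

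The main obstacle I anticipate is purely the bookkeeping around normality and weak* closedness: making sure that $\phi^{(n)}$ is genuinely weak* continuous (not merely bounded and separately continuous) so that it respects the closure operation defining $W_n^{\max}$, and confirming that the two operator $\cl A$-systems in part (ii) share the same weak* topology so that $\id$ qualifies as normal in both directions. Once those continuity facts are in place, both halves reduce to the already-established maximality and universal properties from Section \ref{s_eoss} and Theorem \ref{th_wmaxa}, so I expect no genuinely new difficulty beyond the weak* continuity verification.
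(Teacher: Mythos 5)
Your part (i) is correct and is essentially the paper's own proof: apply Theorem \ref{th_acp}(i) to get $\phi^{(n)}(C_n^{\max}(V;\cl A))\subseteq M_n(\cl S)^+$, then use the weak* continuity of $\phi^{(n)}$ (entrywise, as you verify) together with the weak* closedness of $M_n(\cl S)^+$ to pass to the weak* closure $W_n^{\max}(V;\cl A)$.

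Part (ii), however, contains a genuine logical error: you have swapped the roles of part (i) and of the universal hypothesis on $\cl T$, so that both of your inferences are non sequiturs as written. First, you claim that part (i), applied with $\cl S=\omax^{w^*}_{\cl A}(V)$ and $\phi=\id : V\to\omax^{w^*}_{\cl A}(V)$, yields that $\id : \cl T\to \omax^{w^*}_{\cl A}(V)$ is completely positive. It does not: that application of part (i) only produces the trivial statement that the identity of $\omax^{w^*}_{\cl A}(V)$ is completely positive as a map of $\omax^{w^*}_{\cl A}(V)$ into itself; it makes no reference to the matricial cones of $\cl T$, so it cannot control any map whose domain is $\cl T$. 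The correct tool for this direction is the hypothesis on $\cl T$: taking $\cl S=\omax^{w^*}_{\cl A}(V)$, which is a dual operator $\cl A$-system by Theorem \ref{th_wmaxa}, the map $\id : \cl T\to\omax^{w^*}_{\cl A}(V)$ is a normal positive $\cl A$-bimodule map, hence completely positive by assumption. Second, you claim that $\id : \omax^{w^*}_{\cl A}(V)\to \cl T$ is completely positive ``by the hypothesis on $\cl T$''; but that hypothesis concerns maps \emph{from} $\cl T$ into dual operator $\cl A$-systems, not maps \emph{into} $\cl T$, so it cannot be applied to this direction either. Here the correct tool is part (i): $\id : V\to \cl T$ is a normal positive $\cl A$-bimodule map (the level-one cones both equal $V^+$ and the weak* topologies of $\cl T$ and of $\omax^{w^*}_{\cl A}(V)$ both coincide with that of $V$), so part (i) applied with $\cl S=\cl T$ gives that $\id : \omax^{w^*}_{\cl A}(V)\to\cl T$ is completely positive. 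With these two justifications interchanged your argument becomes correct, and then agrees with the paper's proof (which simply runs the scheme of Theorem \ref{th_umin}(ii) in the dual category), including your observation that $\psi=\id$ is automatically a weak* homeomorphism because both structures carry the weak* topology of $V$.
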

\begin{proof}
(i) 
By Theorem \ref{th_acp} (i), $\phi^{(n)}(C_n^{\max}(V;\cl A))\subseteq M_n(\cl S)^+$. 
Since $\phi$ is weak* continuous and $M_n(\cl S)^+$ is weak* closed, 
$\phi^{(n)}(W_n^{\max}(V;\cl A))\subseteq M_n(\cl S)^+$.

(ii) similar to the proof of Theorem \ref{th_umin} (ii). 
\end{proof}

\noindent {\bf Remark. } 
Let $\cl A$ be a W*-algebra and 
$\frak{A}^{w^*}_{\cl A}$ (resp. $\frak{S}^{w^*}_{\cl A}$) be the category, whose objects are dual AOU $\cl A$-spaces
(resp. dual operator $\cl A$-systems) and whose morphisms are weak* continuous unital positive
(resp. weak* continuous unital completely positive) maps. 
It is easy to see that the 
correspondences $V\to \omin^{w^*}_{\cl A}(V)$ and $V\to \omax^{w^*}_{\cl A}(V)$
are covariant functors from $\frak{A}^{w^*}_{\cl A}$ into $\frak{S}^{w^*}_{\cl A}$, here $\omin^{w^*}_{\cl A}(V)= \omin_{\cl A}(V)$ 
as per Theorem \ref{th_maxdual}.


\section{Inflated Schur multipliers}\label{s_ism}

In this section, we introduce an operator-valued version of classical measurable 
Schur multipliers, and characterise them in a fashion, similar to 
the well-known descriptions in the scalar-valued case \cite{Gro, peller}.

Let $(X,\mu)$ be a standard measure space. 
We denote by $\chi_{\alpha}$ the characteristic function of a measurable set $\alpha\subseteq X$.
If $f$ and $g$ are measurable functions
defined on $X$, we write $f\sim g$ when $f(x) = g(x)$ for almost all $x \in X$. 
Throughout the section, let $H = L^2(X,\mu)$ and 
fix a separable Hilbert space $K$.
For a function $a\in L^{\infty}(X,\mu)$, let 
$M_a$ be the operator on $H$ given by 
$M_a f = af$, $f\in H$, and set
$$\cl D = \left\{M_a : a\in L^{\infty}(X,\mu)\right\}.$$ 
We denote by $H\otimes K$ the Hilbertian tensor product of 
$H$ and $K$. Note that $H\otimes K$ is unitarily equivalent 
to the space $L^2(X,K)$ of all weakly measurable functions 
$g : X\to K$ such that $\|g\|_2 := \left(\int_X \|g(x)\|^2d\mu(x)\right)^{1/2} < \infty$. 

If $\cl U\subseteq \cl B(H)$ and $\cl V\subseteq \cl B(K)$, 
we denote by $\cl U\bar{\otimes}\cl V$ the spacial weak* tensor product 
of $\cl U$ and $\cl V$. 
We write $\cl M(X,\cl B(K))$ for the space of all functions $F : X \to \cl B(K)$ such that, 
for all $\xi_0\in K$, the functions $x\to F(x)\xi_0$ and $x\to F(x)^*\xi_0$ are weakly measurable.
Note that
$\cl D\bar\otimes\cl B(K)$ can be canonically identified 
with the space $L^{\infty}(X,\cl B(K))$ of all bounded functions $F$ in $\cl M(X,\cl B(K))$ \cite{t}.
Through this identification, a function $F$ gives rise to the operator 
$M_F\in \cl B(L^2(X,K))$, defined by 
$$(M_F\xi)(x) = F(x)(\xi(x)), \ \ \ x\in X, \ \xi\in L^2(X,K).$$ 
It is easy to see that if $k\in \cl M(X\times X,\cl B(K))$ then the function 
$(x,y)\to \|k(x,y)\|$ is measurable as a function from $X\times X$ into $[0,+\infty]$. 
Let $L^2(X\times X,\cl B(K))$ be the space of all functions $k\in \cl M(X\times X,\cl B(K))$ for which 
$$\|k\|_2:= \left(\int_{X\times X}\|k(x,y)\|^2 d\mu(x) d\mu(y) \right)^{1/2} < \infty.$$
(Note that the functions from the space $L^2(X\times X,\cl B(K))$ need not be weakly measurable.)
If $k\in L^2(X\times X,\cl B(K))$ and $\xi,\eta\in L^2(X,K)$ then,
by \cite[Lemma 7.5]{t}, the function $(x,y)\to \left(k(x,y)(\xi(y)),\eta(x)\right)$ is measurable. 
Standard arguments (see \cite[p. 391]{mtt}) show that 
the formula
$$(T_k\xi,\eta) = \int_{X\times X} \left(k(x,y)(\xi(y)),\eta(x)\right) d\mu(y)d\mu(x), \ x,y\in X, \xi,\eta\in L^2(X,K),$$
defines a bounded operator on $L^2(X,K)$ with 
$\|T_k\|\leq \|k\|_2$.
If $K = \bb{C}$, the operators of the form $T_k$ are precisely the Hilbert-Schmidt operators on $H$.

\begin{remark}\label{r_zero}
For an element $k\in L^2(X\times X,\cl B(K))$, we have that $T_k = 0$ if and only if $k(x,y) = 0$ for almost all $(x,y)\in X\times X$.
\end{remark}
\begin{proof}
Suppose that $T_k = 0$; then, for $\xi,\eta\in K$ and $f,g\in L^2(X)$, we have 
$\int_{X\times X} f(x) g(y) (k(x,y)\xi,\eta) d\mu(y)d\mu(x) = 0$. 
Thus, $(k(x,y)\xi,\eta) = 0$ almost everywhere. Since $K$ is separable and $k(x,y)$ is bounded for all $x,y\in X$, 
this implies that $k(x,y) = 0$ almost everywhere. The converse direction is trivial.
\end{proof}

We equip the linear space $\{T_k : k\in L^2(X\times X,\cl B(K))\}$
with the operator space structure arising from its inclusion into $\cl B(H\otimes K)$. 
Similarly, whenever $\cl S$ is an operator system and $\cl S_0\subseteq \cl S$ is a self-adjoint 
(not necessarily unital) subspace of $\cl S$, we equip $\cl S_0$ with the matrix ordering
inherited from $\cl S$, and thus talk about a linear map from $\cl S_0$ into an operator system $\cl T$
being positive or completely positive. 

For functions $\nph\in L^{\infty}(X\times X,\cl B(K))$ and $k\in L^2(X\times X)$, let 
$\nph k : X\times X\to \cl B(K)$ be the function given by 
$$(\nph k)(x,y) = k(x,y)\nph(x,y), \ \ \ x,y\in X.$$
It is straightforward to check that $\nph k\in L^2(X\times X,\cl B(K))$.

\begin{definition}\label{def_opv}
A function $\nph\in L^{\infty}(X\times X,\cl B(K))$ 
will be called 
an \emph{(inflated) Schur multiplier} if the map
$$T_k \longrightarrow T_{\nph k}, \ \ k\in L^2(X\times X),$$
is completely bounded.
\end{definition}

We will denote by $\frak{S}(X,K)$ the space of all inflated Schur multipliers with values in $\cl B(K)$.
If $\nph\in \frak{S}(X,K)$ then the map 
$S_{\nph} : T_k\to T_{\nph k}$ defined on the space $\cl S_2(H)$ of all Hilbert-Schmidt operators on $H$ 
extends to a completely bounded map 
from $\cl K(H)$ into $\cl B(H\otimes K)$, which will be denoted in the same way. 
By taking the second dual of $S_{\nph}$, and composing with the weak* continuous 
projection from $\cl B(H\otimes K)^{**}$ onto $\cl B(H\otimes K)$, 
we obtain a completely bounded weak* continuous map from $\cl B(H)$ into $\cl B(H\otimes K)$
which for simplicity will still be denoted by $S_{\nph}$.

\begin{theorem}\label{th_deou}
Let $\nph\in L^{\infty}(X\times X,\cl B(K))$. The following are equivalent:

(i) \ $\nph\in \frak{S}(X,K)$;

(ii) there exist functions $A_i \in L^{\infty}(X,\cl B(K))$ and $B_i \in L^{\infty}(X,\cl B(K))$, $i\in \bb{N}$, such that 
the series $\sum_{i=1}^{\infty} A_i(x)A_i(x)^*$ and $\sum_{i=1}^{\infty} B_i(y)^*B_i(y)$
converge almost everywhere in the weak* topology,
$$\esssup_{x\in X} \left\|\sum_{i=1}^{\infty} A_i(x)A_i(x)^*\right\| < \infty, \ \ 
\esssup_{y\in X} \left\|\sum_{i=1}^{\infty} B_i(y)^*B_i(y)\right\| < \infty,$$
\noindent and
\begin{equation}\label{eq_aibi}
\nph(x,y) = \sum_{i=1}^{\infty} A_i(x)B_i(y), \ \ \ \mbox{ a.e. on } X\times X,
\end{equation}
where the sum is understood in the weak* topology. 
\end{theorem}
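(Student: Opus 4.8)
The plan is to prove the direction (ii)$\Rightarrow$(i) by a direct estimate and to prove the harder direction (i)$\Rightarrow$(ii) by a duality/factorisation argument modelled on the scalar Grothendieck inequality proof. First I would dispose of (ii)$\Rightarrow$(i). Given the factorisation $\nph(x,y) = \sum_i A_i(x)B_i(y)$, I would split $S_{\nph}$ as a composition. Writing $A(x) : K \to \ell^2(K)$ for the column operator with entries $A_i(x)^*$ (so that $A(x)^* A(x) = \sum_i A_i(x)A_i(x)^*$ is essentially bounded) and $B(y) : K \to \ell^2(K)$ for the column with entries $B_i(y)$, one sees that $M_A \in \cl D\bar\otimes\cl B(K,\ell^2(K))$ and $M_B \in \cl D\bar\otimes\cl B(K,\ell^2(K))$ are bounded multiplication operators with norms controlled by the two essential suprema. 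The map $S_{\nph}$ then factors as $T \mapsto M_A^*(T\otimes I_{\ell^2})M_B$ (after identifying $\ell^2(K)$-ampliations appropriately), which is manifestly completely bounded since it is a composition of multiplications by fixed bounded operators and an ampliation; its cb-norm is at most the product of the two essential suprema. This is the routine half.

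The substance is (i)$\Rightarrow$(ii). The plan is to run the classical Grothendieck/Haagerup argument in the operator-valued, measurable setting. Since $S_{\nph}$ is completely bounded and weak* continuous from $\cl B(H)$ into $\cl B(H\otimes K) = \cl D\bar\otimes\cl B(K)\bar\otimes\ldots$, I would first represent its cb-norm via a Stinespring/Wittstock-type factorisation: a completely bounded map has a representation $S_{\nph}(T) = V_1^* (\rho(T)) V_2$ for a $*$-representation $\rho$ and bounded $V_1,V_2$. The key point is that one should choose $\rho$ to be a \emph{module} representation compatible with the $\cl D$-action, using that $S_{\nph}$ is a $\cl D$-bimodule map in the sense that $S_{\nph}(M_a T M_b) = M_a S_{\nph}(T) M_b$ for $a,b\in L^\infty(X,\mu)$ — this modularity is exactly what singles out Schur multipliers among all cb maps and is what will force the factorisation to have the pointwise form (\ref{eq_aibi}). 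Here the operator $\cl D$-system machinery of the earlier sections is the natural tool: the multiplier induces a normal completely positive (after the usual $2\times 2$ matrix trick to reduce cb to cp) $\cl D$-bimodule map, and Theorem \ref{th_repdoas} supplies a normal module Stinespring representation on some $H\otimes K'$.

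From the module Stinespring data I would extract the functions $A_i, B_i$. Concretely, writing the representing vectors/operators $V_1, V_2$ as elements of $\cl D\bar\otimes\cl B(K,K')$ (using that the commutant condition coming from $\cl D$-modularity forces $V_1,V_2$ to be diagonalised by the masa $\cl D$, i.e. to be multiplication operators $M_{A}$, $M_{B}$ valued in $\cl B(K,K')$), I would choose an orthonormal-type decomposition of $K'$ to write $A(x) = (A_i(x))_i$ and $B(y)=(B_i(y))_i$ as rows/columns of $\cl B(K)$-valued functions. Testing the resulting identity against Hilbert-Schmidt kernels $k\in L^2(X\times X)$ and using Remark \ref{r_zero} to pass from operator identities to almost-everywhere pointwise identities, I would recover $\nph(x,y)=\sum_i A_i(x)B_i(y)$ a.e., with the two weak*-convergent sums $\sum_i A_i(x)A_i(x)^*$ and $\sum_i B_i(y)^*B_i(y)$ essentially bounded by $\|V_1\|^2$ and $\|V_2\|^2$, hence by $\|S_\nph\|_{\cb}$.

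The main obstacle I anticipate is the measurability and pointwise-convergence bookkeeping: the Stinespring representation is abstract, and turning the representing operators into honestly measurable $\cl B(K)$-valued fields $A_i(\cdot), B_i(\cdot)$ requires a careful direct-integral / disintegration argument over the masa $\cl D$ (decomposing $K'$ and $\rho$ along $(X,\mu)$), together with a selection argument to guarantee weak measurability of the entries and weak* (rather than merely a.e.\ scalar) convergence of the series. Controlling the essential suprema of the partial sums uniformly, and justifying that the a.e.\ identity (\ref{eq_aibi}) holds in the weak* topology pointwise and not just after integration against kernels, is where the genuine work lies; the algebraic skeleton of the argument is the classical Grothendieck factorisation, but the measurable operator-valued incarnation is the delicate part.
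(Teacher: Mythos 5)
Your proposal is correct in substance and rests on the same pivot as the paper's proof: the $\cl D$-bimodularity of $S_{\nph}$ forces a factorisation through the commutant $(\cl D\otimes 1_K)' = \cl D\bar\otimes\cl B(K) \equiv L^{\infty}(X,\cl B(K))$, whose elements are exactly the multiplication operators by essentially bounded measurable $\cl B(K)$-valued functions. The difference is how that factorisation is obtained. The paper transplants $S_{\nph}|_{\cl K(H)}$ to the copy $\cl K(H)\otimes 1\subseteq \cl B(H\otimes K)$, extends it to a weak* continuous completely bounded $\cl D\otimes 1$-bimodule map on all of $\cl B(H\otimes K)$ via \cite[Exercise 8.6 (ii)]{Pa}, and then quotes Haagerup's decomposition theorem \cite{haag} to write the extension as $T\mapsto \sum_{i=1}^{\infty} A_iTB_i$ with $A_i,B_i$ in the commutant and the associated row and column operators bounded; you instead propose to re-derive this representation by hand ($2\times 2$ matrix trick, module Stinespring, normal representations of $\cl B(H)$ are ampliations, intertwiners of the two masa actions are multiplication operators). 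That route is viable --- it is essentially a proof of the quoted theorem of Haagerup --- and it buys self-containedness at the cost of redoing standard work. Two points in your plan should, however, be repaired. First, Theorem \ref{th_repdoas} is not the right tool to invoke: it represents dual operator $\cl A$-systems, not completely positive or completely bounded maps; what you need is a module version of Stinespring/Wittstock for normal maps, i.e.\ precisely \cite{haag} or arguments in the spirit of \cite{smith}. Second, the ``delicate direct-integral/disintegration'' you anticipate largely dissolves: once the intertwining relations place the factoring operators in $\cl D\bar\otimes\cl B(K,K')$, the canonical identification $\cl D\bar\otimes\cl B(K)\equiv L^{\infty}(X,\cl B(K))$ (taken from \cite{t}) already provides the measurable fields, and the a.e.\ weak* convergence of $\sum_{i=1}^{\infty}A_i(x)A_i(x)^*$ and $\sum_{i=1}^{\infty}B_i(y)^*B_i(y)$ with uniformly bounded essential suprema follows from the boundedness of the row and column operators off a single null set. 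Finally, in your ``routine half'' (ii)$\Rightarrow$(i), the assertion that $S_{\nph}$ ``factors as'' $T\mapsto M_A^*(T\otimes I)M_B$ is exactly what must be checked: one has to verify that this composition sends $T_k$ to $T_{\nph k}$, which the paper does by integrating the a.e.\ identity (\ref{eq_aibi}) against kernels $k\in L^2(X\times X)\cap L^{\infty}(X\times X)$ and invoking dominated convergence; without that verification the easy direction is incomplete.
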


\begin{proof} 
(ii)$\Rightarrow$(i)
Considering $A_i, B_i\in \cl D\bar\otimes\cl B(K)$, $i\in \bb{N}$, 
the assumptions imply that $A = (A_i)_{i\in \bb{N}}$ (resp. $B = (B_i)_{i\in \bb{N}}$) is a bounded 
row (resp. column) operator.
It follows that the map $\Psi : \cl B(H)\to \cl B(H\otimes K)$, given by
$$\Psi(T) = \sum_{i=1}^{\infty} A_i(T\otimes I) B_i, \ \ \ T\in \cl B(H),$$
is well-defined and completely bounded. 
Let $k\in L^2(X\times X) \cap L^{\infty}(X\times X)$, $\xi,\eta\in K$ and $f,g\in L^2(X)\cap L^1(X)$.
For almost all $(x,y)\in X\times X$, we have 
\begin{eqnarray*}
& & 
\left|k(x,y)f(y)\overline{g(x)}\left(\nph(x,y)\xi,\eta\right)\right| \\
& \leq & 
\|k\|_{\infty} |f(y)| |g(x)|\sum_{i=1}^{\infty} \left|(B_i(y)\xi,A_i(x)^*\eta)\right| \\
& \leq & 
\|k\|_{\infty} |f(y)| |g(x)|\sum_{i=1}^{\infty} \|B_i(y)\xi\| \|A_i(x)^*\eta\|\\
& \leq & 
\|k\|_{\infty} |f(y)| |g(x)| \left(\sum_{i=1}^{\infty}  \|B_i(y)\xi\|^2\right)^{1/2}  
\left(\sum_{i=1}^{\infty} \|A_i(x)^*\eta\|^2\right)^{1/2} \\
& \leq & 
\|k\|_{\infty} |f(y)|  |g(x)| \|A\| \|B\| \|\xi\| \|\eta\|,
\end{eqnarray*}
while the function $(x,y)\to  |f(y)|  |g(x)|$ is integrable with respect to $\mu\times\mu$. 
By the Lebesgue Dominated Convergence Theorem, we now have 
\begin{eqnarray*}
& & 
(\Psi(T_k)(f\otimes\xi),g\otimes\eta)\\
& = & 
\left(\sum_{i=1}^{\infty} A_i(T_k\otimes I) B_i (f\otimes \xi),g\otimes \eta\right)\\
& = & 
\sum_{i=1}^{\infty} \int_{X\times X} k(x,y)f(y)\overline{g(x)}(B_i(y)\xi,A_i(x)^*\eta)d\mu(x)d\mu(y)\\
& = & 
\int_{X\times X} k(x,y)f(y)\overline{g(x)}\left(\left(\sum_{i=1}^{\infty} A_i(x)B_i(y)\right)\xi,\eta\right)d\mu(x)d\mu(y)\\
& = & 
\int_{X\times X} k(x,y)f(y)\overline{g(x)}\left(\nph(x,y)\xi,\eta\right)d\mu(x)d\mu(y)\\
& = & 
\int_{X\times X} f(y)\overline{g(x)}\left((\nph k)(x,y)\xi,\eta\right)d\mu(x)d\mu(y)\\
& = & 
\left(T_{\nph k}(f\otimes\xi),g\otimes \eta\right).
\end{eqnarray*}
By linearity and the density of $L^2(X\times X) \cap L^{\infty}(X\times X)$ in $L^2(X\times X)$
and of $L^2(X) \cap L^1(X)$ in $L^2(X)$, 
it follows that $\nph\in \frak{S}(X,K)$ and $\Psi = S_{\nph}$.

(i)$\Rightarrow$(ii)
Let $\nph\in \frak{S}(X,K)$. 
For $k\in L^2(X\times X)$,  $a,b\in L^{\infty}(X)$, $\xi,\eta\in K$ and $f,g\in L^2(X)$, we have 
\begin{eqnarray*}
& & 
\left(S_{\nph}(M_bT_kM_a)(f\otimes\xi),g\otimes\eta\right)\\
& = & 
\int_{X\times X} a(y) b(x) f(y)\overline{g(x)}\left((\nph k)(x,y)\xi,\eta\right)d\mu(x)d\mu(y)\\
& = & 
\left((M_b\otimes I) S_{\nph}(T_k)(M_a\otimes I)(f\otimes\xi),g\otimes\eta\right).
\end{eqnarray*}
By continuity, 
$$S_{\nph}(BTA) = (B\otimes I)S_{\nph}(T)(A\otimes I), \ \ \ T\in \cl K(H), A,B\in \cl D.$$
Let $\Phi_1 : \cl K(H) \otimes 1 \to \cl B(H\otimes K)$ be the map given by $\Phi_1(T\otimes I) = S_{\nph}(T)$;
then $\Phi_1$ is a completely bounded $\cl D\otimes 1$-bimodule map. 
Using \cite[Exercise 8.6 (ii)]{Pa}, we can find a 
completely bounded weak* continuous $\cl D\otimes 1$-bimodule map 
$\Phi_2 : \cl B(H\otimes K) \to \cl B(H\otimes K)$ extending $\Phi_1$.
By \cite{haag}, there exist a bounded row operator $A = (A_i)_{i=1}^{\infty}$ and 
a bounded column operator $B = (B_i)_{i\in \bb{N}}$, 
where $A_i, B_i\in \cl D\bar\otimes\cl B(K)$, $i\in \bb{N}$, such that 
$$\Phi_2(T) = \sum_{i=1}^{\infty} A_i T B_i, \ \ \ T\in \cl B(H\otimes K).$$
Using the identification $\cl D\bar\otimes\cl B(K) \equiv L^{\infty}(X,\cl B(K))$, we 
consider $A_i$ (resp. $B_i$) as a function $A_i : X\to \cl B(K)$
(resp. $B_i : X\to \cl B(K)$). 
The boundedness of $A$ and $B$ now imply that 
there exists a null set $N\subseteq X$ such that the series 
$$\sum_{i=1}^{\infty} A_i(x)A_i(x)^* \ \ \mbox{ and } \ \ \sum_{i=1}^{\infty} B_i(y)^* B_i(y)$$
are weak* convergent whenever $x,y\not\in N$.
If $(x,y)\not\in N\times N$ then the series $\sum_{i=1}^{\infty} A_i(x) B_i(y)$ is weak* convergent. 
As in the first part of the proof, we conclude that 
$\nph(x,y)$ coincides with its sum for almost all $(x,y)$. 
\end{proof}

An inspection of the proof of Theorem \ref{th_deou} shows the following 
description of inflated Schur multipliers.

\begin{remark}\label{r_modc0}
The following are equivalent, for a completely bounded map $\Phi : \cl K(H)\to \cl B(H\otimes K)$:

(i) \ $\Phi(BTA) = (B\otimes I)\Phi(T)(A\otimes I)$, for all $T\in \cl K(H)$ and all $A,B\in \cl D$;

(ii) there exists a Schur multiplier $\nph \in \frak{S}(X,K)$ such that $\Phi = S_{\nph}$.
\end{remark}

\begin{definition}\label{def_opvp}
A Schur multiplier $\nph\in \frak{S}(X,K)$ will be called 
\emph{positive} if the map $S_{\nph} : \cl B(H)\to \cl B(H\otimes K)$ is positive.
\end{definition}

For the next theorem, note that, if $\nph\in L^{\infty}(X\times X,\cl B(K))$ and 
$\alpha\subseteq X$ is a subset of finite measure
then the function $\nph \chi_{\alpha\times\alpha}$ belongs to $L^2(X\times X,\cl B(K))$
and hence the operator 
$T_{\nph \chi_{\alpha\times\alpha}} : H\to H\otimes K$ is well-defined.

\begin{theorem}\label{th_modc}
The following are equivalent, for a Schur multiplier $\nph\in \frak{S}(X,K)$:

(i) \ \ $\nph$ is positive;

(ii) \ the map $S_{\nph} : \cl B(H)\to \cl B(H\otimes K)$ is completely positive;

(iii) for every subset $\alpha\subseteq X$ of finite measure, 
the operator $T_{\nph \chi_{\alpha\times\alpha}}$ is positive;

(iv) \ there exist functions $A_i \in L^{\infty}(X,\cl B(K))$, $i\in \bb{N}$, such that 
the series $\sum_{i=1}^{\infty} A_i(x)A_i(x)^*$ converges almost everywhere in the weak* topology, 
$$\esssup_{x\in X} \left\|\sum_{i=1}^{\infty} A_i(x)A_i(x)^*\right\| < \infty,$$
and
$$\nph(x,y) = \sum_{i=1}^{\infty} A_i(x)A_i(y)^*, \ \ \ \mbox{ a.e. on } X\times X.$$
\end{theorem}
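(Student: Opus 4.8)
The plan is to prove the cycle of implications $(ii)\Rightarrow(i)\Rightarrow(iii)\Rightarrow(iv)\Rightarrow(ii)$, with the single implication $(iii)\Rightarrow(iv)$ carrying essentially all of the difficulty. The three easy links come first. The implication $(ii)\Rightarrow(i)$ is immediate, since a completely positive map is in particular positive. For $(i)\Rightarrow(iii)$, I would fix $\alpha\subseteq X$ with $\mu(\alpha)<\infty$; then $\chi_{\alpha\times\alpha}\in L^2(X\times X)$ and the operator $T_{\chi_{\alpha\times\alpha}}\in\cl{B}(H)$ is the positive rank-one operator $f\mapsto \chi_\alpha\int_\alpha f\,d\mu$, so positivity of $S_{\nph}$ together with $S_{\nph}(T_{\chi_{\alpha\times\alpha}})=T_{\nph\chi_{\alpha\times\alpha}}$ gives (iii). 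For $(iv)\Rightarrow(ii)$ I would invoke the implication $(ii)\Rightarrow(i)$ of Theorem \ref{th_deou} with $B_i:=A_i^*$: it yields $\nph\in\frak{S}(X,K)$ together with the explicit formula $S_{\nph}(T)=\sum_{i=1}^\infty A_i(T\otimes I)A_i^*$, where $A_i$ is viewed as the multiplication operator in $\cl{D}\bar\otimes\cl{B}(K)$. Each summand $T\mapsto A_i(T\otimes I)A_i^*$ is the composition of the ampliation $T\mapsto T\otimes I$ (a $*$-homomorphism, hence completely positive) with a conjugation, so it is completely positive; a weak* convergent sum of completely positive maps is completely positive, whence $S_{\nph}$ is completely positive.

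It remains to establish $(iii)\Rightarrow(iv)$. First I would record that $(iii)$ forces the symmetry $\nph(x,y)^*=\nph(y,x)$ almost everywhere, since each $T_{\nph\chi_{\alpha\times\alpha}}$ is self-adjoint, and that the sesquilinear form $\langle\xi,\eta\rangle_{\nph}:=\int_{X\times X}(\nph(x,y)\xi(y),\eta(x))\,d\mu(y)d\mu(x)$, defined on the space $\cl{V}$ of $K$-valued square-integrable functions with support of finite measure, is positive semidefinite: for $\xi$ supported in $\alpha$ one has $\langle\xi,\xi\rangle_{\nph}=(T_{\nph\chi_{\alpha\times\alpha}}\xi,\xi)\ge 0$ by $(iii)$. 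Thus $\nph$ is a positive-definite, essentially bounded, $\cl{B}(K)$-valued kernel, and the goal becomes a \emph{measurable Kolmogorov decomposition}: a separable Hilbert space $\cl{L}$ and an essentially bounded weakly measurable field $x\mapsto\gamma(x)\in\cl{B}(K,\cl{L})$ with $\nph(x,y)=\gamma(x)^*\gamma(y)$ almost everywhere. Granting this, fixing an orthonormal basis of $\cl{L}\cong\ell^2\otimes K$ and letting $A_i(x)\in\cl{B}(K)$ denote the corresponding blocks of the row operator $\gamma(x)^*$ produces functions $A_i\in L^\infty(X,\cl{B}(K))$ with $\nph(x,y)=\sum_i A_i(x)A_i(y)^*$, while the required bound $\esssup_x\|\sum_i A_i(x)A_i(x)^*\|=\esssup_x\|\gamma(x)\|^2\le\|\nph\|_\infty$ follows from the uniform norm estimate produced during the construction; this is exactly (iv).

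To construct $\gamma$ I would perform a GNS-type separation-and-completion of the form $\langle\cdot,\cdot\rangle_{\nph}$ on $\cl{V}$, obtaining $\cl{L}$ together with the canonical map $W:\cl{V}\to\cl{L}$; separability of $\cl{L}$ follows from standardness of $(X,\mu)$ and separability of $K$. The delicate point is to pass from $W$ to a genuine pointwise field $\gamma(\cdot)$, since the natural candidates $W(\chi_\beta\otimes v)$ involve sets $\beta$ of positive measure rather than points. For this I would use a Lebesgue-differentiation argument: the estimate $\|W(\chi_\beta\otimes v)\|^2=\int_{\beta\times\beta}(\nph(x,y)v,v)\,d\mu(y)d\mu(x)\le\|\nph\|_\infty\|v\|^2\mu(\beta)^2$ shows that $\mu(\beta)^{-1}W(\chi_\beta\otimes v)$ stays bounded (yielding $\|\gamma(x)\|\le\|\nph\|_\infty^{1/2}$), and along a differentiation basis shrinking to $x$ these vectors converge, for almost every $x$, to a vector $\gamma(x)v$ depending linearly and boundedly on $v$. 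The reproducing identity $\langle W(\chi_\beta\otimes v),W(\chi_{\beta'}\otimes w)\rangle=\int_{\beta'\times\beta}(\nph(x,y)v,w)\,d\mu(x)d\mu(y)$ then differentiates (in both variables) to $(\nph(x,y)v,w)=\langle\gamma(y)v,\gamma(x)w\rangle$, i.e. to $\nph(x,y)=\gamma(x)^*\gamma(y)$ almost everywhere.

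I expect this measurable Kolmogorov decomposition to be the main obstacle: extracting an honest essentially bounded measurable operator field from the abstract completion $\cl{L}$, and verifying that it is well defined for almost every point independently of the chosen differentiation basis, is the only genuinely nonformal step, as everything else is either immediate or a direct appeal to Theorem \ref{th_deou}. As a by-product, closing the cycle yields the automatic complete positivity statement $(i)\Leftrightarrow(ii)$ for Schur multipliers.
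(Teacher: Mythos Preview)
Your three easy implications agree with the paper's. The divergence is at the hard step: you attempt $(iii)\Rightarrow(iv)$ directly via a measurable Kolmogorov/GNS construction, whereas the paper routes through $(iii)\Rightarrow(ii)\Rightarrow(iv)$, and this reordering dissolves precisely the difficulty you anticipate.

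The paper's $(iii)\Rightarrow(ii)$ is a short inflation trick. For fixed $n$, form the disjoint union $Y=X_1\sqcup\cdots\sqcup X_n$ of $n$ copies of $(X,\mu)$ and define $\psi:Y\times Y\to\cl{B}(K)$ by $\psi|_{X_i\times X_j}=\nph$. Then $S_\psi=\id_{M_n}\otimes S_\nph$, and condition $(iii)$ for $\nph$ yields $T_{\psi\chi_{\tilde\alpha\times\tilde\alpha}}=J\otimes T_{\nph\chi_{\alpha\times\alpha}}\ge 0$ for sets $\tilde\alpha=\alpha\sqcup\cdots\sqcup\alpha\subseteq Y$; a density argument then gives positivity of $S_\psi$, i.e.\ $n$-positivity of $S_\nph$. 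Once $(ii)$ is in hand, $(ii)\Rightarrow(iv)$ is obtained by rerunning the proof of Theorem~\ref{th_deou} and observing that when $S_\nph$ is completely positive the Haagerup representation there can be taken with $B_i=A_i^*$ (Stinespring in place of Wittstock). No Kolmogorov decomposition, no differentiation, no diagonal values are needed.

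Your $(iii)\Rightarrow(iv)$ sketch, by contrast, has a genuine obstruction that you correctly flag but understate. The claimed convergence of $\mu(\beta)^{-1}W(\chi_\beta\otimes v)$ as $\beta\downarrow\{x\}$ would, for \emph{norm} convergence in $\cl L$, require the averages $\mu(\beta)^{-2}\int_{\beta\times\beta}(\nph(x',y')v,v)\,d\mu(x')d\mu(y')$ to converge; but $\nph\in L^\infty(X\times X,\cl{B}(K))$ is only defined modulo $(\mu\times\mu)$-null sets, and for non-atomic $\mu$ the diagonal is itself null, so there is no well-defined target ``$(\nph(x,x)v,v)$''. One can salvage \emph{weak} convergence against a countable dense family in $\cl L$ via one-variable Lebesgue differentiation of the functions $y\mapsto\int_X(\nph(x',y)v,g(x'))\,d\mu(x')$, and then run a careful Fubini-type argument to recover $\nph(x,y)=\gamma(x)^*\gamma(y)$ almost everywhere; this can be made rigorous, but it is substantially more delicate than your outline indicates (managing the various ``a.e.'' exceptional sets, and the existence of a differentiation basis on a merely standard $(X,\mu)$), and it is entirely sidestepped by the paper's inflation argument.
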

\begin{proof}
(i)$\Rightarrow$(iii) 
Let $\alpha\subseteq X$ be a subset of finite measure. 
Then $\chi_{\alpha}\in H$; let $\chi_{\alpha}\otimes\chi_{\alpha}^*$ be the corresponding 
(positive) rank one operator. Then
$$ T_{\nph \chi_{\alpha\times\alpha}} = S_{\nph}(\chi_{\alpha}\otimes\chi_{\alpha}^*),$$
and the conclusion follows. 

(iii)$\Rightarrow$(ii) 
Let $n\in \bb{N}$, $X_i = X$ for $i = 1,\dots,n$, $Y = X_1\cup\dots\cup X_n$
and $\nu$ be the disjoint sum of $n$ copies of the measure $\mu$. 
Identify $\bb{C}^n\otimes H$ with $L^2(Y,\nu)$, 
and define $\psi : Y\times Y\to \cl B(K)$ by letting 
$\psi(x,y) = \nph(x,y)$ if $(x,y) \in X_i\times X_j = X\times X$.
Note that $S_{\psi} = \id_{M_n}\otimes S_{\nph}$ and hence $\psi \in \frak{S}(Y,K)$. 
Let $\alpha\subseteq X$ have finite measure and 
$J\in M_n$ be the matrix all of whose entries are equal to $1$.
Let $\alpha_i \subseteq X_i$ be the set that coincides with $\alpha$, $i = 1,\dots,n$,
and $\tilde{\alpha} = \cup_{i=1}^n \alpha_i$; 
we have that 
\begin{equation}\label{eq_J}
T_{\psi \chi_{\tilde{\alpha} \times \tilde{\alpha}}} \equiv J \otimes T_{\nph \chi_{\alpha\times\alpha}}.
\end{equation}
By assumption, $T_{\nph \chi_{\alpha\times\alpha}}$ is positive; thus, by (\ref{eq_J}),
$T_{\psi \chi_{\tilde{\alpha}\times\tilde{\alpha}}}$ is positive. 
For $g\in L^{\infty}(Y,\nu)\cap L^2(Y,\nu)$ and $h\in L^{\infty}(\tilde{\alpha})$, we have
$$\left(S_{\psi}(g \otimes g^*)h,h\right) = 
\left(T_{\psi \chi_{\tilde{\alpha} \times \tilde{\alpha}}}(gh),gh\right)\geq 0.$$
Since the set
$$\left\{h\in L^2(Y,\nu) : \exists \mbox{ a set of finite measure } \alpha \subseteq X 
\mbox{ with } h\in L^{\infty}(\tilde{\alpha})\right\}$$
is dense in $L^2(Y,\nu)$, we have that 
$S_{\psi}(g \otimes g^*) \in \cl B(H\otimes K)^+$. 
By weak* continuity,
$S_{\psi}(T) \in \cl B(H\otimes K)^+$ whenever $T\in \cl B(L^2(Y,\nu))^+$. 
Thus, $S_{\psi}$ is positive, that is, $S_{\nph}$ is $n$-positive. 

(ii)$\Rightarrow$(i) is trivial. 

(ii)$\Rightarrow$(iv) follows from the proof of Theorem \ref{th_deou} by noting that 
in the case $S_{\nph}$ is completely positive, one can choose $B_i = A_i^*$, $i\in \bb{N}$. 

(iv)$\Rightarrow$(i) follows from the proof of Theorem \ref{th_deou}. 
\end{proof}


\section{Positive extensions}\label{s_pe}

In this section, we 
apply our results on maximal operator system $\cl A$-structures 
to questions about positive extensions of inflated Schur multipliers. 
We first recall some measure theoretic background from \cite{a} and \cite{eks}, required in the sequel. 
A subset
$E\subseteq X\times X$ is called \emph{marginally null} if $E\subseteq
(M\times X)\cup (X\times M)$, where $M\subseteq X$ is null.  We call
two subsets $E,F\subseteq X\times X$ {\it marginally equivalent} 
(resp. {\it  equivalent}),
and write $E\cong F$ (resp. $E\sim F$), 
if their symmetric difference is marginally null (resp. null with respect to 
product measure).
We say that $E$ is \emph{marginally contained} in $F$ (and write
$E\subseteq_{\omega} F$) if the set difference $E\setminus F$ is
marginally null.  A measurable subset $\kappa\subseteq X\times X$ is called
\begin{itemize}
\item a \emph{rectangle} if $\kappa = \alpha\times\beta$ where
$\alpha,\beta$ are measurable subsets of~$X$;
\item {\it $\omega$-open} if it is marginally equivalent to a countable union of rectangles, and 
\item {\it $\omega$-closed} if its complement $\kappa^c$ is $\omega$-open.
\end{itemize}
Recall that, by~\cite{stt_cl}, if $\cl E$ is any collection of 
$\omega$-open sets then there exists a smallest, up to marginal
equivalence, $\omega$-open set $\cup_{\omega}\cl E$, called the
\emph{$\omega$-union} of $\cl E$, such that every
set in~$\cl E$ is marginally contained in $\cup_{\omega}\cl E$.
Given a measurable set $\kappa$, one defines its
\emph{$\omega$-interior} to be
\[\ointer(\kappa) =
\bigcup\mbox{}_{\omega}\left\{R : R \, \mbox{ is a rectangle with } R \subseteq_{\omega} \kappa\right\}.\]
The \emph{$\omega$-closure} $\ocl(\kappa)$ of~$\kappa$ is defined to be 
the complement of $\ointer(\kappa^c)$. 
For a set $\kappa\subseteq X\times X$, we write $\hat{\kappa} =
\{(x,y)\in X\times X : (y,x)\in \kappa\}$.
The subset~$\kappa\subseteq X\times X$ is said to be 
\emph{generated by rectangles}  
if $\kappa\cong\ocl(\ointer(\kappa))$  \cite{eks, llt}.

For any $\omega$-closed subset 
$\kappa\subseteq X\times X$, let 
$$\cl S_2(\kappa) = \left\{T_k :  k \in L^2(\kappa)\right\}, \ \ \cl S_0(\kappa) = \overline{\cl S_2(\kappa)}^{\|\cdot\|} \ \mbox{ and } \
\cl S(\kappa) = \overline{\cl S_2(\kappa)}^{w^*},$$
where $L^2(\kappa)$ is the space of functions in $L^2(X\times X)$ which are supported
on $\kappa$, up to a set of zero product measure. 
Note that the spaces $\cl S_2(\kappa)$, $\cl S_0(\kappa)$ and $\cl S(\kappa)$ are 
$\cl D$-bimodules. 
We equip them with the operator space structures inherited from $\cl B(H)$.

Partially defined scalar-valued Schur multipliers were defined in \cite{llt}. Here we extend this 
notion to the operator-valued setting. 

\begin{definition}\label{d_newschur}
  Let $\kappa\subseteq X\times X$ be a subset generated by rectangles.
  A function $\nph \in L^{\infty}(\kappa,\cl B(K))$ will be called a 
  \emph{partially defined Schur multiplier} if the map $S_{\nph}$ from 
  $\cl S_2(\kappa)$ into $\cl B(H\otimes K)$, given by 
  $$S_{\nph}(T_{k}) = T_{\nph k}, \ \ \ k\in L^2(\kappa),$$
  is completely bounded.
\end{definition}

\begin{remark}\label{r_eqli}
For Schur multipliers $\nph, \psi \in L^{\infty}(\kappa,\cl B(K))$, we have that $S_{\nph} = S_{\psi}$ 
if and only if $\nph\sim\psi$.
\end{remark}

\begin{proof}
Suppose $\nph, \psi \in L^{\infty}(\kappa,\cl B(K))$ are such that $S_{\nph} = S_{\psi}$.
Then $T_{\nph k} = T_{\psi k}$ for every $k\in L^2(\kappa)$. 
By Remark \ref{r_zero}, $\nph k \sim \psi k$. It now easily follows that $\nph\sim\psi$. 
The converse implication follows by reversing the previous steps. 
\end{proof}

Let $\kappa\subseteq X\times X$ be a subset generated by rectangles.
We note that the map $S_{\nph}$ from Definition \ref{d_newschur} is $\cl D$-bimodular. 
In addition, if $\psi \in \frak{S}(X,K)$ is given as in Definition \ref{def_opv}, then its restriction $\psi|_{\kappa} : \kappa\to \cl B(K)$
is an inflated Schur multiplier.

\begin{proposition}\label{p_chsch} 
Let $K$ be a separable Hilbert space, $\kappa\subseteq X\times X$ a subset 
generated by rectangles and 
$\nph \in L^{\infty}(\kappa,\cl B(K))$. The following are equivalent:

(i) \ \  $\nph$ is a Schur multiplier;

(ii) \ there exists a Schur multiplier $\psi : X\times X\to \cl B(K)$
  such that $\psi|_{\kappa} \sim \nph$;

(iii) there exists a unique completely bounded map 
$\Phi_0 : \cl S_0(\kappa) \to \cl B(H\otimes K)$ such that 
$\Phi_0(T_k) = T_{\nph k}$, for each $k\in L^2(\kappa)$;

(iv) \ there exists a unique completely bounded weak* continuous map
$\Phi : \cl S(\kappa) \to \cl B(H\otimes K)$ such that $\Phi(T_k) = T_{\nph k}$, for each $k\in L^2(\kappa)$.
\end{proposition}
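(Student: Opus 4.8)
The plan is to prove the cyclic chain $(\mathrm{i})\Rightarrow(\mathrm{iii})\Rightarrow(\mathrm{iv})\Rightarrow(\mathrm{ii})\Rightarrow(\mathrm{i})$, which makes all four conditions equivalent. Two of the links are essentially formal. For $(\mathrm{ii})\Rightarrow(\mathrm{i})$, if $\psi$ is a fully defined Schur multiplier with $\psi|_{\kappa}\sim\nph$, then $\psi k\sim\nph k$ for every $k\in L^2(\kappa)$, so $S_{\nph}$ is merely the restriction of the completely bounded map $S_{\psi}$ to the subspace $\cl S_2(\kappa)\subseteq\cl K(H)$ and is therefore completely bounded. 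For $(\mathrm{i})\Rightarrow(\mathrm{iii})$, since $\cl S_2(\kappa)$ is norm-dense in $\cl S_0(\kappa)$ and each amplification $S_{\nph}^{(n)}$ is bounded with norm at most $\|S_{\nph}\|_{\cb}$, the map extends by norm continuity to a completely bounded map $\Phi_0$ on $\cl S_0(\kappa)$; uniqueness of $\Phi_0$ is immediate from this density.

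For $(\mathrm{iii})\Rightarrow(\mathrm{iv})$ I would argue via the bidual. As $\cl S_0(\kappa)$ is a norm-closed subspace of $\cl K(H)$ and $\cl K(H)^{**}=\cl B(H)$, the canonical embedding identifies $\cl S_0(\kappa)^{**}$, completely isometrically and weak* homeomorphically, with the bipolar of $\cl S_0(\kappa)$ in $\cl B(H)$, which is exactly its weak* closure $\cl S(\kappa)$. The second adjoint $\Phi_0^{**}\colon\cl S(\kappa)\to\cl B(H\otimes K)^{**}$ is then weak* continuous and completely bounded; composing it with the canonical (weak* continuous, completely contractive) projection of $\cl B(H\otimes K)^{**}$ onto $\cl B(H\otimes K)$ --- the same device used after Definition \ref{def_opv} --- yields a weak* continuous completely bounded map $\Phi\colon\cl S(\kappa)\to\cl B(H\otimes K)$ that restricts to $\Phi_0$ on $\cl S_0(\kappa)$ and in particular sends $T_k$ to $T_{\nph k}$. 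Uniqueness follows from the weak* density of $\cl S_2(\kappa)$ in $\cl S(\kappa)$.

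The substantive link is $(\mathrm{iv})\Rightarrow(\mathrm{ii})$. First I would check that $\Phi$ is a $\cl D$-bimodule map: on $\cl S_2(\kappa)$ this is the $\cl D$-bimodularity of $S_{\nph}$ recorded after Definition \ref{d_newschur}, and it propagates to all of $\cl S(\kappa)$ because, for fixed $a\in\cl D$, both $T\mapsto\Phi(a\cdot T)$ and $T\mapsto(M_a\otimes I)\Phi(T)$ are weak* continuous on $\cl S(\kappa)$ and agree on the weak* dense subspace $\cl S_2(\kappa)$ (similarly on the right). The key step is then to extend the completely bounded $\cl D$-bimodule map $\Phi$ to a completely bounded $\cl D$-bimodule map $\tilde\Phi\colon\cl B(H)\to\cl B(H\otimes K)$; this rests on the injectivity of $\cl B(H\otimes K)$ as an operator $\cl D$-bimodule and is supplied by the module form of the Wittstock extension theorem, namely \cite[Exercise 8.6 (ii)]{Pa}, exactly as invoked in the proof of Theorem \ref{th_repdoas}. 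Restricting $\tilde\Phi$ to $\cl K(H)$ gives a completely bounded $\cl D$-bimodule map, so Remark \ref{r_modc0} produces a Schur multiplier $\psi\in\frak{S}(X,K)$ with $\tilde\Phi|_{\cl K(H)}=S_{\psi}$. Since $\tilde\Phi$ extends $\Phi$, for every $k\in L^2(\kappa)$ we have $T_{\psi k}=S_{\psi}(T_k)=\Phi(T_k)=T_{\nph k}$, so $\psi|_{\kappa}$ and $\nph$ induce the same partially defined Schur multiplier and Remark \ref{r_eqli} yields $\psi|_{\kappa}\sim\nph$. This establishes $(\mathrm{ii})$ and closes the cycle.

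I expect the genuine obstacle to lie in this last link: one must ensure that the extension furnished by \cite[Exercise 8.6 (ii)]{Pa} can be taken to be a $\cl D$-bimodule map (not merely completely bounded), and that passing from $\cl B(H)$ to $\cl K(H)$ preserves the module property. Both hinge on the $\cl D$-action on $\cl B(H\otimes K)$ being implemented by the normal representation $M_a\mapsto M_a\otimes I$, which leaves $\cl K(H)$ invariant; once this is in place, the identification of $\tilde\Phi|_{\cl K(H)}$ with an honest Schur multiplier via Remark \ref{r_modc0} is automatic. The bidual identification $\cl S_0(\kappa)^{**}\cong\cl S(\kappa)$ used in $(\mathrm{iii})\Rightarrow(\mathrm{iv})$ is standard and I would treat it only briefly.
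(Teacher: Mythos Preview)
Your proof is correct and rests on the same two tools as the paper's: the $\cl D$-bimodule Wittstock extension \cite[Exercise 8.6 (ii)]{Pa} and the identification of completely bounded $\cl D$-bimodule maps on $\cl K(H)$ with Schur multipliers via Remark \ref{r_modc0}. The organisation, however, differs. The paper proves the substantive step as $(\mathrm{i})\Rightarrow(\mathrm{ii})$ directly---extend $S_{\nph}$ by norm continuity to $\cl S_0(\kappa)$, pass to $\cl S_0(\kappa)\otimes 1$, apply the bimodule extension, restrict to $\cl K(H)\otimes 1$, and invoke Remark \ref{r_modc0}---after which $(\mathrm{ii})\Rightarrow(\mathrm{iv})$ is immediate because the globally defined $S_{\psi}$ is already weak* continuous, and $(\mathrm{iv})\Rightarrow(\mathrm{iii})\Rightarrow(\mathrm{i})$ are trivial. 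You instead run the cycle the other way, which forces you to prove $(\mathrm{iii})\Rightarrow(\mathrm{iv})$ independently via the bidual identification $\cl S_0(\kappa)^{**}\cong\cl S(\kappa)$ and the projection $\cl B(H\otimes K)^{**}\to\cl B(H\otimes K)$. This is a perfectly valid and standard manoeuvre, but it is extra work the paper sidesteps; conversely, your $(\mathrm{iv})\Rightarrow(\mathrm{ii})$ is marginally cleaner because you already have the weak* continuous map in hand. One small point: the paper applies Exercise 8.6(ii) with domain and codomain both sitting inside $\cl B(H\otimes K)$ via the $\otimes 1$ embedding (so that the $\cl D\otimes 1$-action is by genuine left and right multiplication), whereas you phrase the extension as going from $\cl S(\kappa)\subseteq\cl B(H)$ to $\cl B(H)$; the two formulations are equivalent after the $\otimes 1$ trick, but if you are citing the exercise literally you may want to match its hypotheses.
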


\begin{proof} 
(i)$\Rightarrow$(ii)
Since $\nph$ is a Schur multiplier, the map 
$\Phi_2 : \cl S_2(\kappa)\to \cl B(H\otimes K)$, given by 
$\Phi_2(T_k) = T_{\nph k}$, extends to a completely bounded linear map
$\Phi_0 : \cl S_0(\kappa)\to \cl B(H\otimes K)$. 
By continuity,
$$\Phi_0(BTA) = (B\otimes I)\Phi_0(T)(A\otimes I), \ \ \ T\in \cl S_0(\kappa), A,B \in \cl D.$$
Let 
$\hat{\Phi} : \cl S_0(\kappa)\otimes 1 \to \cl B(H\otimes K)$ be the map given by 
$$\hat{\Phi}(T\otimes I) = \Phi_0(T), \ \ \  T\in \cl S_0(\kappa).$$
By~\cite[Exercise 8.6 (ii)]{Pa}, there exists a completely bounded 
$\cl D\otimes 1$-bimodule map 
$\hat{\Phi}_1 : \cl B(H\otimes K) \to \cl B(H\otimes K)$, extending $\hat{\Phi}$.
Let $\hat{\Psi} : \cl K(H)\otimes 1\to \cl B(H\otimes K)$ be the restriction of $\hat{\Phi}_1$; then
$\hat{\Psi}|_{\cl S_0(\kappa)\otimes 1} = \hat{\Phi}$. Let 
$\Psi : \cl K(H)\to \cl B(H\otimes K)$ be given by $\Psi(T) = \hat{\Psi}(T\otimes I)$.
Clearly,
$$\Psi(BTA) = (B\otimes I)\Psi(T)(A\otimes I), \ \ \ T\in \cl K(H), A,B \in \cl D.$$
By Remark~\ref{r_modc0}, there exists 
$\psi\in \frak{S}(X,K)$ such that $\Psi = S_{\psi}$. 
For every $k\in L^2(\kappa)$ we have 
$S_{\psi}(T_k) = S_{\nph}(T_k)$.
By Remark \ref{r_eqli}, $\psi|_{\kappa}\sim \nph$.

(ii)$\Rightarrow$(iv) Take $\Phi = S_{\psi}|_{\cl S(\kappa)}$. 
The uniqueness of $\Phi$ follows from the fact that 
the Hilbert-Schmidt operators with integral kernels in $L^2(\kappa)$ 
are weak* dense in $\cl S(\kappa)$.

(iv)$\Rightarrow$(iii)$\Rightarrow$(i) are trivial. 
\end{proof}

If $\nph : \kappa \to \cl B(K)$ is a Schur multiplier then 
we will denote still by $S_{\nph}$ the weak* continuous map
defined on $\cl S(\kappa)$ whose existence was established in Proposition \ref{p_chsch} (iv).

We say that a subset $\kappa\subseteq X\times X$ is \emph{symmetric} if~$\kappa\cong\hat\kappa$.
We call $\kappa$ a \emph{positivity domain} \cite{llt} if $\kappa$ is symmetric,
generated by rectangles and the diagonal 
$\Delta := \{(x,x) : x\in X\}$ is marginally contained in $\kappa$.
The following was established in \cite{llt}:

\begin{proposition}\label{p_opsc}
If $\kappa\subseteq X\times X$ is generated by rectangles, then the following are equivalent:

(i) \ $\cl S(\kappa)$ is an operator system;

(ii) $\kappa$ is a positivity domain.
\end{proposition}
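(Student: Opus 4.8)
The plan is to prove Proposition~\ref{p_opsc} by establishing the two implications separately, exploiting the characterisation of when $\cl S(\kappa)$ contains an appropriate order unit. The key observation is that $\cl S(\kappa)$ is an operator system precisely when it is unital and selfadjoint as a weak* closed subspace of $\cl B(H)$, so I would analyse each of these requirements in terms of the structure of $\kappa$.

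First I would handle the selfadjointness. The adjoint of an integral operator $T_k$ with kernel $k\in L^2(\kappa)$ is the integral operator whose kernel is $(x,y)\mapsto \overline{k(y,x)}$, which is supported on $\hat\kappa$. Thus $\cl S_2(\kappa)^* = \cl S_2(\hat\kappa)$, and after taking weak* closures, $\cl S(\kappa)$ is selfadjoint if and only if $\cl S(\kappa) = \cl S(\hat\kappa)$, which (using Remark~\ref{r_zero} to pass between operators and kernels) holds precisely when $\kappa\cong\hat\kappa$, i.e.\ $\kappa$ is symmetric. Next I would address the unit. The identity operator $I$ on $H$ is not Hilbert--Schmidt when $\mu$ is non-atomic, so $I$ itself need not lie in $\cl S_2(\kappa)$; however $I$ is the weak* limit of the multiplication operators $M_{\chi_\alpha}$ (equivalently, the integral operators with kernel $\chi_{\{(x,x):x\in\alpha\}}$) over sets $\alpha$ of finite measure, and these are supported marginally on the diagonal $\Delta$. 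So $I\in\cl S(\kappa)$ exactly when the diagonal is marginally contained in $\kappa$, that is, $\Delta\subseteq_\omega\kappa$. The remaining structural hypothesis, that $\kappa$ be generated by rectangles, is already assumed in both directions and is what guarantees that $\cl S(\kappa)$ is a reflexive (equivalently, masa-bimodule) space determined by its support; this is precisely the regularity needed for the support-theoretic arguments to be reversible.

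For the forward direction (i)$\Rightarrow$(ii), I would argue that if $\cl S(\kappa)$ is an operator system then, being a weak* closed selfadjoint unital subspace of $\cl B(H)$, it is in particular selfadjoint and contains $I$; by the two paragraphs above this forces $\kappa$ to be symmetric and to marginally contain $\Delta$, and together with the standing hypothesis that $\kappa$ is generated by rectangles, this says exactly that $\kappa$ is a positivity domain. For the converse (ii)$\Rightarrow$(i), assuming $\kappa$ is a positivity domain I would verify that $\cl S(\kappa)$ is selfadjoint (from $\kappa\cong\hat\kappa$), unital (from $\Delta\subseteq_\omega\kappa$), and weak* closed by construction; since any weak* closed unital selfadjoint subspace of $\cl B(H)$ is an operator system with the inherited matrix ordering, the conclusion follows.

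The main obstacle will be the careful measure-theoretic bookkeeping connecting operator-level statements about $\cl S(\kappa)$ to marginal containment statements about $\kappa$, and in particular ensuring that the weak* closure operations behave well. The delicate points are: (a) justifying that $\cl S(\kappa)^* = \cl S(\hat\kappa)$ survives passage to the weak* closure, which relies on the fact that $\kappa$ is generated by rectangles so that $\cl S(\kappa)$ is recovered as the reflexive masa-bimodule with support $\kappa$ (this is the substance imported from \cite{eks, stt_cl, llt}); and (b) showing that the diagonal projections $M_{\chi_\alpha}$ genuinely approximate $I$ in the weak* topology and lie in $\cl S(\kappa)$ when $\Delta\subseteq_\omega\kappa$, which uses that the diagonal kernel $\chi_{\{(x,x):x\in\alpha\}}$ is marginally supported on $\kappa$. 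I expect that most of this support theory is already available from the cited references \cite{llt}, so the proof will largely consist of assembling these facts rather than developing them from scratch.
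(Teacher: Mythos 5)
Your overall skeleton --- reduce the statement to ``$\cl S(\kappa)$ is selfadjoint iff $\kappa\cong\hat\kappa$'' and ``$I\in\cl S(\kappa)$ iff $\Delta\subseteq_{\omega}\kappa$'' --- is the right one (note that the paper itself gives no proof, quoting the result from \cite{llt}, where the argument indeed runs through masa-bimodule support theory). However, there is a genuine gap at the step that carries the main weight. Your claim that $M_{\chi_\alpha}$ is ``equivalently the integral operator with kernel $\chi_{\{(x,x):x\in\alpha\}}$'' is false whenever $\mu$ is non-atomic: the diagonal $\Delta$ is then product-null, so every $k\in L^2(X\times X)$ supported on $\Delta$ is zero almost everywhere and $T_k=0$ --- this is exactly the paper's Remark \ref{r_zero}. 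Multiplication operators are never nonzero Hilbert--Schmidt operators in the non-atomic setting, and there are \emph{no} nonzero elements of $\cl S_2(X\times X)$ with kernel supported on the diagonal. Consequently your mechanism produces nothing, and the implication $\Delta\subseteq_{\omega}\kappa \Rightarrow I\in\cl S(\kappa)$, which is the substantive half of (ii)$\Rightarrow$(i), is not established. What this step genuinely requires is the operator synthesis theorem for sets generated by rectangles (going back to Arveson and \cite{eks}): for such $\kappa$, the weak* closed space $\cl S(\kappa)$ coincides with the space of \emph{all} operators whose operator-theoretic support is marginally contained in $\kappa$, whence $I\in\cl S(\kappa)$ iff $\supp(I)=\Delta\subseteq_{\omega}\kappa$. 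The approximating Hilbert--Schmidt operators have kernels spread over rectangles of $\kappa$ meeting the diagonal, not kernels living on $\Delta$ itself. You invoke precisely this ``reflexive masa-bimodule'' picture at your point (a), but not at point (b), where it is indispensable; your argument as written only works when $\mu$ is purely atomic, i.e.\ in the discrete case, which is not the generality the proposition claims.

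Two secondary issues. First, in the selfadjointness step, Remark \ref{r_zero} only yields $\cl S_2(\kappa)=\cl S_2(\hat\kappa)$ iff $\kappa\sim\hat\kappa$ (equality up to product-null sets), whereas symmetry is defined as \emph{marginal} equivalence $\kappa\cong\hat\kappa$; to bridge the two you need both that every Hilbert--Schmidt operator in $\cl S(\kappa)$ has kernel supported in $\kappa$ (\cite[Lemma 6.1]{eks}, as used in Lemma \ref{l_de}) and that two sets generated by rectangles which are product-null equivalent are marginally equivalent (using that an $\omega$-open product-null set is marginally null). Second, the forward direction (i)$\Rightarrow$(ii) can be done without synthesis and without your diagonal-kernel identification: every element of $\cl S(\kappa)$ is supported in $\kappa$ because the set of operators supported in $\kappa$ is a weak* closed space containing $\cl S_2(\kappa)$, and a direct computation shows $I$ is supported in $\kappa$ iff $\Delta\subseteq_{\omega}\kappa$; this is a support computation, not a kernel computation, and it is the correct replacement for your argument in that direction.
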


Let $\nph : \kappa\to \cl B(K)$ be a Schur multiplier. We say that the Schur multiplier $\psi : X\times X\to \cl B(K)$ 
is a \emph{positive extension} of $\nph$ if $\psi$ is positive and $\psi|_{\kappa} \sim \nph$.

\begin{proposition}\label{p_cpext}
Let $\kappa$ be a positivity domain and $\nph : \kappa\to \cl B(K)$ be a Schur multiplier. 
The following are equivalent:

(i) \ $\nph$ has a positive extension;

(ii) the map $S_{\nph} : \cl S(\kappa) \to \cl B(H\otimes K)$ is completely positive.
\end{proposition}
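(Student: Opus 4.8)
The plan is to prove the equivalence by unwinding the definitions on both sides and connecting them through the positivity characterisations already established. The statement concerns a positivity domain $\kappa$ (so that $\cl S(\kappa)$ is an operator system by Proposition \ref{p_opsc}) and a Schur multiplier $\nph : \kappa\to \cl B(K)$; I want to show that $\nph$ admits a positive extension to all of $X\times X$ precisely when $S_{\nph} : \cl S(\kappa)\to \cl B(H\otimes K)$ is completely positive.

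First I would establish the easy implication (i)$\Rightarrow$(ii). Suppose $\psi : X\times X\to \cl B(K)$ is a positive extension of $\nph$, so $\psi|_{\kappa}\sim \nph$ and $\psi$ is positive. By Theorem \ref{th_modc}, positivity of the Schur multiplier $\psi$ is equivalent to complete positivity of the associated map $S_{\psi} : \cl B(H)\to \cl B(H\otimes K)$. Since $\psi|_{\kappa}\sim\nph$, Remark \ref{r_eqli} gives that $S_{\psi}$ and $S_{\nph}$ agree on $\cl S_2(\kappa)$, and hence (by weak* continuity and weak* density of the Hilbert-Schmidt operators, as in Proposition \ref{p_chsch}(iv)) on all of $\cl S(\kappa)$. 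Thus $S_{\nph} = S_{\psi}|_{\cl S(\kappa)}$ is the restriction of a completely positive map to the operator system $\cl S(\kappa)$, and the restriction of a completely positive map to an operator subsystem is completely positive, giving (ii).

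The main work is the converse (ii)$\Rightarrow$(i). The strategy is to extend the completely positive map $S_{\nph} : \cl S(\kappa)\to \cl B(H\otimes K)$ from the operator system $\cl S(\kappa)$ to a completely positive map defined on a suitable larger operator system containing enough of $\cl B(H)$, and then to recognise the extension as coming from a positive fully-defined Schur multiplier. Concretely, I would apply an Arveson-type extension theorem to extend $S_{\nph}$ to a completely positive map $\tilde\Phi : \cl B(H)\to \cl B(H\otimes K)$ (the target $\cl B(H\otimes K)$ being injective permits such an extension). The delicate point is that this extension must be made $\cl D$-bimodular and weak* continuous, so that by Remark \ref{r_modc0} it is of the form $S_{\psi}$ for a genuine fully-defined Schur multiplier $\psi\in \frak{S}(X,K)$; this is where the $\cl D$-module structure of $\cl S(\kappa)$ (and the rigidity coming from $\kappa$ being generated by rectangles, i.e.\ being $\cl D$-bimodular) is essential. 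Once $\psi$ is obtained as a completely positive, hence positive, Schur multiplier by Theorem \ref{th_modc}, one checks $S_{\psi}|_{\cl S(\kappa)} = S_{\nph}$, so that $S_{\psi}(T_k) = S_{\nph}(T_k) = T_{\nph k}$ for $k\in L^2(\kappa)$, which by Remark \ref{r_eqli} forces $\psi|_{\kappa}\sim\nph$; thus $\psi$ is a positive extension of $\nph$.

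The hard part will be arranging for the Arveson extension of $S_{\nph}$ to retain the $\cl D$-bimodularity that is needed to invoke Remark \ref{r_modc0}; a generic completely positive extension need not be a module map. I would handle this either by averaging the extension against the action of the unitary group of $\cl D$ (exploiting that $\cl D$ is abelian and that $S_{\nph}$ is already $\cl D$-bimodular on $\cl S(\kappa)$, so that such an averaging fixes it), or by first extending to the $\cl D$-bimodule generated by $\cl S(\kappa)$ and then pushing through the module-extension results of the type used in \cite[Exercise 8.6(ii)]{Pa} that already appear in Proposition \ref{p_chsch}. Securing both the module property and weak* continuity of the extension simultaneously is the crux; once these are in place, the identification of $\psi$ and the verification that it extends $\nph$ are routine consequences of the earlier remarks.
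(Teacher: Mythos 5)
Your direction (i)$\Rightarrow$(ii) is correct and coincides with the paper's argument. For (ii)$\Rightarrow$(i), your overall architecture also matches the paper's (Arveson extension, then arrange matters so that Remark \ref{r_modc0} applies, then identify $\psi|_{\kappa}\sim\nph$ via Remark \ref{r_eqli}), but you explicitly leave unresolved the step you yourself call the crux --- producing an extension that is both $\cl D$-bimodular and weak* continuous --- and neither of the two fixes you sketch closes it as stated. Averaging over the unitary group of $\cl D$ with an invariant mean (legitimate, as this group is abelian, hence amenable) does yield a completely positive $\cl D$-bimodular extension agreeing with $S_{\nph}$ on $\cl S(\kappa)$, but an invariant mean is only finitely additive, so the averaged map has no reason to be weak* continuous; you would still have to restore normality afterwards and check that doing so does not destroy the module property. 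Your second route is worse: \cite[Exercise 8.6 (ii)]{Pa} is the completely \emph{bounded} (Wittstock-type) module extension theorem --- it produces a CB $\cl D$-bimodule extension, not a completely positive one --- so the positivity of the resulting $\psi$, which is the whole point of the statement, would be lost.

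The idea you are missing is that in this setting the $\cl D$-bimodularity of a completely positive extension is \emph{automatic}, so no averaging or module-extension theorem is needed. Since $\kappa$ is a positivity domain, $I\in\cl S(\kappa)$ (Proposition \ref{p_opsc}), and $\cl S(\kappa)$ is a $\cl D$-bimodule, so $\cl D=\cl D\cdot I\subseteq\cl S(\kappa)$: the masa $\cl D$ sits \emph{inside} the domain operator system, on which $S_{\nph}$ is a $\cl D$-bimodule map. By \cite[Exercise 7.4]{Pa} (a multiplicative-domain type result), any completely positive extension of a $\cl D$-bimodule map defined on an operator system containing $\cl D$ is again a $\cl D$-bimodule map; this is precisely why your worry that ``a generic completely positive extension need not be a module map'' does not arise here. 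The paper then secures normality by the standard bidual device: restrict $S_{\nph}$ to $\cl S_0(\kappa)+\bb{C}I$, extend by Arveson to $\cl K(H)+\bb{C}I$, restrict to $\cl K(H)$, and compose the second dual with the canonical projection $\cl E:\cl B(H\otimes K)^{**}\to\cl B(H\otimes K)$. The resulting map $\tilde\Psi:\cl B(H)\to\cl B(H\otimes K)$ is completely positive and weak* continuous, hence (by weak* density of $\cl S_2(\kappa)$ in $\cl S(\kappa)$) extends $S_{\nph}$ on all of $\cl S(\kappa)\supseteq\cl D$; Exercise 7.4 then gives bimodularity for free, Remark \ref{r_modc0} produces $\psi\in\frak{S}(X,K)$ with $S_{\psi}=\tilde\Psi$, and Remarks \ref{r_eqli} and Theorem \ref{th_modc} identify $\psi$ as a positive extension of $\nph$, exactly as in your final step. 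Without these two ingredients --- the bidual trick for weak* continuity and the automatic bimodularity of CP extensions --- your argument does not go through as written.
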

\begin{proof} 
(i)$\Rightarrow$(ii) Suppose that $\psi : X\times X\to \cl B(K)$ is a positive extension of $\nph$. 
By Theorem \ref{th_modc}, $S_{\psi}$ is completely positive. 
On the other hand, $S_{\psi}|_{\cl S(\kappa)} = S_{\psi|_{\kappa}}$. 
Since $\psi|_{\kappa} = \nph$, we conclude that $S_{\nph}$ is completely positive. 

(ii)$\Rightarrow$(i) 
Let $\Phi_0$ be the restriction of $S_{\nph}$ to 
$\cl S_0(\kappa) + \bb{C}I$; clearly, $\Phi_0$ is a completely positive map. 
By Arveson's Extension Theorem, 
there exists a completely positive map $\Psi_0 : \cl K(H) + \bb{C}I\to \cl B(H\otimes K)$ extending $\Phi_0$. 
The restriction $\Psi$ of $\Psi_0$ to $\cl K(H)$ is then a completely positive 
extension of $S_{\nph}|_{\cl S_0(\kappa)}$. 
Let $\Psi^{**}$ be the second dual of $\Psi$, and 
$\cl E : \cl B(H\otimes K)^{**}\to \cl B(H\otimes K)$ be the canonical projection. 
We have that the map $\tilde{\Psi} = \cl E\circ \Psi^{**} : \cl B(H) \to \cl B(H\otimes K)$ is completely positive and weak* 
continuous extension of $S_{\nph}$. 
Let $\hat{\Psi} : \cl B(H)\otimes 1 \to \cl B(H\otimes K)$ 
(resp. $\hat{\Phi} : \cl S(\kappa)\otimes 1 \to \cl B(H\otimes K)$)
be the map given by 
$\hat{\Psi}(T\otimes I) = \tilde{\Psi}(T)$ (resp. $\hat{\Phi}(T\otimes I) = S_{\nph}(T)$); then
$\hat{\Psi}$ is a completely positive extension of map $\hat{\Phi}$.
Note that $\hat{\Phi}$ is a $\cl D \otimes 1$-bimodule map.
By~\cite[Exercise 7.4]{Pa}, $\hat{\Psi}$ is a $\cl D \otimes 1$-bimodule map. 
By Remark~\ref{r_modc0}, 
there exists $\psi\in \frak{S}(X,K)$ such that 
$\tilde{\Psi} = S_{\psi}$; the function $\psi$ is the desired positive extension of $\nph$. 
\end{proof}

If $\cl S$ is an operator system,
we write $\cl S^{++}$ for the cone of all positive finite rank operators in $\cl S$. 
If $\cl T$ is an operator system, we call a linear map $\Phi : \cl S\to \cl T$ \emph{strictly positive}
if $\Phi(S)\in \cl T^+$ whenever $S\in \cl S^{++}$. 
We call $\Phi$ \emph{strictly completely positive} if $\Phi^{(n)}$ is strictly positive for all $n\in \bb{N}$. 
A Schur multiplier $\nph : \kappa \to \cl B(K)$ will be called strictly positive (resp. strictly completely positive) 
if the map $S_{\nph} : \cl S(\kappa) \to \cl B(H\otimes K)$ is 
strictly positive (resp. strictly completely positive).

\begin{lemma}\label{l_de}
Let $\kappa$ be a positivity domain. 
Every positive finite rank operator in $M_n(\cl S(\kappa))$ 
has the form $(T_{k_{i,j}})_{i,j=1}^n$, where $k_{i,j}\in L^2(\kappa)$, $i,j = 1,\dots,n$.
\end{lemma}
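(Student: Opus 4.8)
The plan is to reduce the statement to the case $n=1$ and then to show that a finite-rank operator lying in $\cl S(\kappa)$ must have its integral kernel supported on $\kappa$. For the reduction I would start from a finite-rank operator $T = (T_{i,j})_{i,j=1}^n \in M_n(\cl S(\kappa)) \subseteq \cl B(H^n)$. Writing $V_j : H\to H^n$ for the isometric inclusion of $H$ as the $j$-th coordinate, each block is recovered as $T_{i,j} = V_i^* T V_j$; since $T$ has finite rank so does every $T_{i,j}$, and by the definition of $M_n(\cl S(\kappa))$ we have $T_{i,j}\in \cl S(\kappa)$. (Positivity of $T$ is not needed for this conclusion and will not be used.) Thus it suffices to prove that every finite-rank operator $S\in \cl S(\kappa)$ equals $T_k$ for some $k\in L^2(\kappa)$.

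Next I would observe that any finite-rank $S$ is automatically Hilbert--Schmidt: writing $S$ as a finite sum of rank-one operators $\zeta\mapsto (\zeta,\eta_l)\xi_l$ with $\xi_l,\eta_l\in H$, its kernel is $k(x,y) = \sum_l \xi_l(x)\overline{\eta_l(y)}\in L^2(X\times X)$, so $S = T_k$. It then remains to see that $k$ vanishes off $\kappa$. The key device is the set $\cl M = \{T\in \cl B(H) : M_{\chi_{\alpha}} T M_{\chi_{\beta}} = 0 \text{ whenever } (\alpha\times\beta)\cap\kappa\cong\emptyset\}$. For $h\in L^2(\kappa)$ and a rectangle $\alpha\times\beta$ marginally disjoint from $\kappa$, the kernel $\chi_{\alpha\times\beta}\,h$ of $M_{\chi_{\alpha}} T_h M_{\chi_{\beta}}$ is supported on $(\alpha\times\beta)\cap\kappa$, a marginally null and hence product-null set, so $M_{\chi_{\alpha}} T_h M_{\chi_{\beta}} = 0$ by Remark \ref{r_zero}; thus $\cl S_2(\kappa)\subseteq \cl M$. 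Since $T\mapsto M_{\chi_{\alpha}} T M_{\chi_{\beta}}$ is weak* continuous, $\cl M$ is weak* closed, and therefore $\cl S(\kappa) = \overline{\cl S_2(\kappa)}^{w^*}\subseteq \cl M$. In particular $S = T_k\in \cl M$.

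Finally I would exploit that, being generated by rectangles, $\kappa$ is $\omega$-closed, so $\kappa^c$ is $\omega$-open and hence marginally equivalent to a countable union $\bigcup_i (\alpha_i\times\beta_i)$ of rectangles, each satisfying $(\alpha_i\times\beta_i)\cap\kappa\cong\emptyset$. Membership $S\in \cl M$ then gives $M_{\chi_{\alpha_i}} T_k M_{\chi_{\beta_i}} = 0$, whose kernel is $\chi_{\alpha_i\times\beta_i}\,k$; by Remark \ref{r_zero} we get $k = 0$ a.e. on each $\alpha_i\times\beta_i$, and thus a.e. on their union, which agrees with $\kappa^c$ up to a marginally (hence product-) null set. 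Therefore $k = 0$ a.e. on $\kappa^c$, i.e. $k\in L^2(\kappa)$ and $S = T_k\in \cl S_2(\kappa)$, as required. The main obstacle is this last passage: moving from weak* membership in $\cl S(\kappa)$ to a genuine support condition on the Hilbert--Schmidt kernel. It rests on the weak* continuity of the compressions $M_{\chi_{\alpha}}(\cdot)M_{\chi_{\beta}}$ together with the measure-theoretic fact that the $\omega$-open set $\kappa^c$ is exhausted, up to a marginally null set, by countably many rectangles disjoint from $\kappa$.
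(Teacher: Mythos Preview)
Your proof is correct and follows the same architecture as the paper's: reduce to $n=1$ by passing to blocks $T_{i,j}=V_i^*TV_j$, observe that finite rank implies Hilbert--Schmidt so that each block is $T_k$ for some $k\in L^2(X\times X)$, and then verify that $k$ is supported on $\kappa$. The only difference is in the last step: the paper simply invokes \cite[Lemma 6.1]{eks} to conclude that a Hilbert--Schmidt operator in $\cl S(\kappa)$ already lies in $\cl S_2(\kappa)$, whereas you supply a self-contained argument via the weak*-closed set $\cl M$ and the rectangle exhaustion of the $\omega$-open complement $\kappa^c$. Your argument is essentially a direct proof of (the relevant part of) that cited lemma, so the two routes coincide in substance; yours has the advantage of being self-contained, while the paper's is shorter by deferring to the literature.
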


\begin{proof}
Recall that $\cl S_2(\kappa) = \{T_k : k\in L^2(\kappa)\}$ and $\cl S_0(\kappa) = \overline{\cl S_2(\kappa)}^{\|\cdot\|}$.
It follows that $M_n(\cl S_0(\kappa)) = \overline{M_n(\cl S_2(\kappa))}^{\|\cdot\|}$.
Suppose that $T \in M_n(\cl S(\kappa))^{++}$ and let $T = (T_{i,j})_{i,j=1}^n$, where $T_{i,j}\in \cl S(\kappa)$,
$i,j=1,\dots,n$. 
Since $T$ has finite rank, so does $T_{i,j}$; in particular, $T_{i,j}$ is a Hilbert-Schmidt operator 
and, by \cite[Lemma 6.1]{eks}, $T_{i,j} \in \cl S_2(\kappa)$. 
\end{proof}

Recall that the Banach space projective tensor product 
\[\cl T(X) = L^2(X,\mu) \hat{\otimes } L^2(X,\mu)\]
can be canonically identified
with the predual of $\cl B(H)$ (and the dual of $\cl K(H)$). Indeed, each element $h\in
\cl T(X)$ can be written as a series $h = \sum_{i=1}^{\infty} f_i\otimes g_i$, 
where $\sum_{i=1}^{\infty} \|f_i\|_2^2 < \infty$
and $\sum_{i=1}^{\infty} \|g_i\|_2^2 < \infty$, and the pairing is then given
by 
\[\langle T,h\rangle = \sum_{i=1}^{\infty} (Tf_i,\overline{g_i}), \ \ \ T\in \cl B(H).\] 
We have~\cite{a} that $h$ can be identified with a complex function
on $X\times X$, defined up to a marginally null set, and given by
$$h(x,y) = \sum_{i=1}^{\infty} f_i(x)g_i(y).$$
The positive cone $\cl T(X)^+$ consists, by definition, of all 
functions $h\in \cl T(X)$ that give rise to positive functionals on $\cl B(H)$,
that is, functions $h$ of the form 
$h = \sum_{i=1}^{\infty} f_i\otimes \overline{f_i}$,  where $\sum_{i=1}^{\infty} \|f_i\|_2^2 < \infty$.
It is well-known that a function
$\nph \in L^{\infty}(X\times X)$ is a Schur multiplier
if and only if, for every $h\in \cl T(X)$, there exists $h'\in \cl T(X)$ such that 
$\nph h \sim h'$ (see \cite{peller}). 
In particular, if the measure $\mu$ is finite then $\frak{S}(X,\bb{C})$ can be naturally identified with 
a subspace of $\cl T(X)$.

\begin{theorem}\label{th_opsysext}
Let $\kappa\subseteq X\times X$ be a positivity domain. The following are equivalent:

(i) \ for every separable Hilbert space $K$, 
every strictly positive Schur multiplier $\nph : \kappa\to \cl B(K)$ is 
strictly completely positive;

(ii) for every $n\in \bb{N}$, every positive finite rank operator in $M_n(\cl S(\kappa))$ is the norm limit of 
sums of operators of the form $(D_i S D_j^*)_{i,j}$, where $(D_i)_{i=1}^n\subseteq \cl D$ and $S\in \cl S(\kappa)^{++}$. 
\end{theorem}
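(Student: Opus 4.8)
The plan is to recast condition (ii) as a single cone inclusion and then dispatch the two implications. For $n\in\bb{N}$, let $\cl{C}_n$ be the convex cone in $M_n(\cl{S}(\kappa))_h$ generated by the elements $\mathbf{D}\cdot S\cdot\mathbf{D}^* = (D_i S D_j^*)_{i,j=1}^n$, where $\mathbf{D} = (D_1,\dots,D_n)^t\in M_{n,1}(\cl{D})$ and $S\in\cl{S}(\kappa)^{++}$. Each such generator is a positive finite rank element of $M_n(\cl{S}(\kappa))$, so (ii) is exactly the assertion that $M_n(\cl{S}(\kappa))^{++}\subseteq\overline{\cl{C}_n}^{\|\cdot\|}$ for every $n$.

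For (ii)$\Rightarrow$(i), I would fix a strictly positive Schur multiplier $\nph:\kappa\to\cl{B}(K)$ and $n\in\bb{N}$. Since $S_{\nph}$ is a $\cl{D}$-bimodule map, $S_{\nph}(D_i S D_j^*) = (D_i\otimes I)S_{\nph}(S)(D_j\otimes I)^*$, and as $S_{\nph}(S)\geq 0$ for $S\in\cl{S}(\kappa)^{++}$, the matrix $S_{\nph}^{(n)}(\mathbf{D}\cdot S\cdot\mathbf{D}^*) = (\mathbf{D}\otimes I)S_{\nph}(S)(\mathbf{D}\otimes I)^*$ is positive. Hence $S_{\nph}^{(n)}(\cl{C}_n)\subseteq M_n(\cl{B}(H\otimes K))^+$. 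Because $S_{\nph}$ is completely bounded, $S_{\nph}^{(n)}$ is norm continuous, and the positive cone is norm closed; combining this with (ii) gives $S_{\nph}^{(n)}(M_n(\cl{S}(\kappa))^{++})\subseteq M_n(\cl{B}(H\otimes K))^+$, so $\nph$ is strictly completely positive.

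For (i)$\Rightarrow$(ii) I would argue by contraposition. If (ii) fails, there are $n$ and $T_0\in M_n(\cl{S}(\kappa))^{++}$ with $T_0\notin\overline{\cl{C}_n}^{\|\cdot\|}$; by Lemma \ref{l_de} we may take $T_0=(T_{k_{i,j}})$ with $k_{i,j}\in L^2(\kappa)$, so $T_0$ is a positive finite rank operator on $H^n$. Since $\overline{\cl{C}_n}^{\|\cdot\|}$ is a closed convex cone containing $0$, a geometric form of the Hahn--Banach theorem yields a bounded hermitian functional on $M_n(\cl{S}(\kappa))$ that is nonnegative on $\cl{C}_n$ and strictly negative at $T_0$. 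Extending it to $M_n(\cl{B}(H))=\cl{B}(H^n)$ and discarding the singular part of the extension --- which vanishes on the compact, hence on the finite rank, operators, in particular on all of $\cl{C}_n$ and on $T_0$ --- produces a \emph{normal} hermitian functional $\Lambda$ on $M_n(\cl{S}(\kappa))$ with $\Lambda(\cl{C}_n)\subseteq\bb{R}^+$ and $\Lambda(T_0)<0$. Writing $\Lambda=\tr_{H^n}(\rho\,\cdot)$ for a self-adjoint trace class $\rho=(\rho_{i,j})$ on $H^n$, the inequality $\Lambda(\mathbf{D}\cdot S\cdot\mathbf{D}^*)\geq 0$ says precisely that for each $S\in\cl{S}(\kappa)^{++}$ the sesquilinear form $(\mathbf{D},\mathbf{E})\mapsto\Lambda(\mathbf{D}\cdot S\cdot\mathbf{E}^*)$ on $M_{n,1}(\cl{D})$ is positive semidefinite.

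The remaining and decisive task is to manufacture from $\Lambda$ a separable Hilbert space $K$ and a strictly positive Schur multiplier $\nph:\kappa\to\cl{B}(K)$, together with a vector $\Xi=(\Xi_i)\in(H\otimes K)^n$, such that $\langle S_{\nph}^{(n)}(T_0)\Xi,\Xi\rangle = \Lambda(T_0)<0$; then $\nph$ is strictly positive but not strictly $n$-positive, contradicting (i). I would do this by a GNS/Stinespring-type construction on the family of positive semidefinite forms attached to $\Lambda$: the $\cl{D}$-module compatibility of these forms forces the resulting completely positive map to be a normal $\cl{D}$-bimodule map, whereupon Remark \ref{r_modc0} and the structure theorems for (positive) Schur multipliers (Theorems \ref{th_deou} and \ref{th_modc}) identify it as $S_{\nph}$ for a strictly positive $\nph$, with the witnessing vectors read off from the negativity at $T_0$; separability of $K$ is automatic since $H$ is separable and $T_0$ has finite rank. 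I expect this realization step --- turning the normal separating functional into a genuine operator-valued Schur multiplier while preserving the $\cl{D}$-module structure --- to be the principal obstacle, and the quantification over all separable $K$ in (i) is essential, since a nontrivial $K$ is needed to detect the failure of complete positivity.
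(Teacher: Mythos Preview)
Your (ii)$\Rightarrow$(i) argument is correct and matches the paper's.

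For (i)$\Rightarrow$(ii) you have the right skeleton---contraposition, Hahn--Banach separation, passage to a normal functional---but the decisive step, which you yourself flag as the principal obstacle, is left to an unspecified ``GNS/Stinespring-type construction.'' This is a genuine gap: there is no standard Stinespring machine that takes as input a family of positive semidefinite forms $(\mathbf{D},\mathbf{E})\mapsto\Lambda(\mathbf{D}\cdot S\cdot\mathbf{E}^*)$, indexed by $S\in\cl{S}(\kappa)^{++}$, and outputs a single completely positive $\cl{D}$-bimodule map on $\cl{S}(\kappa)$. The forms live on $M_{n,1}(\cl{D})$ and vary with $S$; it is unclear what Hilbert space the construction would produce, how the resulting map would act on $\cl{S}(\kappa)$, or why it would be completely bounded so that Remark~\ref{r_modc0} even applies.

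The paper bypasses this abstraction. It first reduces to finite $\mu$ and writes the separating functional as $\omega = (\omega_{i,j})$ with each $\omega_{i,j}\in\cl{T}(X)$; by Arveson's theory each $\omega_{i,j}$ is then a concrete function on $X\times X$ (defined up to a marginally null set). The candidate Schur multiplier is simply the $M_n$-valued function $(x,y)\mapsto(\omega_{i,j}(x,y))_{i,j}$, so $K=\bb{C}^n$, and the witnessing vector is $e\in H^n$ with all entries the constant function $1$. Strict positivity of $S_\omega$ and the failure of positivity of $S_\omega^{(n)}(T)$ are checked by direct integral computations that unwind the separating inequalities. The only subtlety is that the $\omega_{i,j}$ need not themselves be Schur multipliers; this is handled by restricting to an increasing family $X_m\subseteq X$ on which they are, noting that $\omega^{(m)}(R)=\omega((P_m\otimes I_n)R(P_m\otimes I_n))\to\omega(R)$ for compact $R$, so the separating inequalities persist for large $m$. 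The extension to $\sigma$-finite $\mu$ is a further routine approximation. In short, the idea you are missing is that $K=\bb{C}^n$ already suffices and the Schur multiplier is read off directly from the components of the separating functional via the function-theoretic identification of $\cl{T}(X)$; no GNS construction is needed or, as far as one can see, available.
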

\begin{proof}
(i)$\Rightarrow$(ii) 
We first assume that the measure $\mu$ is finite. 
Suppose that there exists $n\in \bb{N}$ and a positive finite rank operator $T\in M_n(\cl S(\kappa))$ 
that is not equal to the limit, in the 
norm topology, of the operators of the form $(D_iS D_j^*)_{i,j=1}^n$, 
where $(D_i)_{i=1}^n\subseteq \cl D$ and $S\in \cl S(\kappa)^{++}$.
By Lemma \ref{l_de}, $T = (T_{k_{i,j}})_{i,j=1}^n$, for some 
$k_{i,j}\in L^2(\kappa)$, $i,j = 1,\dots,n$.
By a geometric form of Hahn-Banach's Theorem, there exist a 
norm continuous functional $\omega : M_n(\cl S_0(\kappa)) \to \bb{C}$
and $\gamma < 0$ such that 
\begin{equation}\label{eq_omnew}
\omega(T) < \gamma \ \mbox{ and } \ 
\omega\left((D_iS D_j^*)_{i,j=1}^n\right) \geq 0, \ \  S\in \cl S(\kappa)^{++}, (D_i)_{i=1}^n\subseteq \cl D.
\end{equation}
Let $\omega_{i,j} : \cl S_0(\kappa) \to \bb{C}$ be the norm continuous functionals
such that 
$$\omega((S_{i,j})_{i,j = 1}^n) = \sum_{i,j = 1}^n \omega_{i,j}(S_{i,j}), \ \ \  S_{i,j}\in \cl S_0(\kappa), \ i, j = 1,\dots,n.$$
After extending $\omega_{i,j}$ to $\cl K(H)$, we may assume that $\omega_{i,j}\in \cl T(X)$ for $i, j = 1, \dots, n$.

Suppose first that $\omega_{i,j}\in \frak{S}(X,\bb{C})$, $i,j = 1,\dots,n$.
Identify $\omega$ with the function (denoted by the same symbol) 
$\omega : X\times X\to M_n$, given by $\omega(x,y) = (\omega_{i,j}(x,y))_{i, j=1}^n$. 
Since $S_{\omega} : \cl S_2(H)\to \cl B(H)\otimes M_n$ is given by 
$S_{\omega}(T_k) = (S_{\omega_{i,j}}(T_k))$, $k\in L^2(X\times X)$, and the maps $S_{\omega_{i,j}}$ are
completely bounded, we have that the map $S_{\omega}$ is completely bounded, that is, 
$\omega\in \frak{S}(X,M_n)$. 

We claim that $S_{\omega}^{(n)}$ is not strictly positive. 
Note that 
$$S_{\omega}^{(n)}(T) = \left(S_{\omega_{i,j}}(T_{k_{p,q}})\right)_{i,j,p,q}.$$
Writing $e$ for the vector in $H^n$ with all its entries equal to the constant function $1$, we have that 
\begin{eqnarray}\label{eq_con}
\gamma  & > & 
\omega(T) = \sum_{i,j=1}^n \int_{\kappa} \omega_{i,j}(x,y)k_{i,j}(x,y) d(\mu\times \mu)(x,y)\nonumber\\
& = & 
\left(\left(S_{\omega_{i,j}}(T_{k_{i,j}})\right)_{i,j}e,e\right).
\end{eqnarray}
Suppose that $S_{\omega}^{(n)}(T)$ is positive. Then its submatrix 
$(S_{\omega_{i,j}}(T_{k_{i,j}}))_{i,j}$ is positive, which contradicts (\ref{eq_con}).

We now show that $S_{\omega}$ is strictly positive. 
Let $S\in \cl S(\kappa)^{++}$. 
Using Lemma \ref{l_de}, write $S = T_k$ for some $k\in L^2(\kappa)$. 
We have that $S_{\omega}(S) = (T_{\omega_{i,j} k})_{i, j=1}^n$. 
For $i = 1,\dots,n$, let $\xi_i\in L^{\infty}(X,\mu)$ and note that, since $\mu$ is finite, $\xi_i\in H$.
Let $D_i = M_{\xi_i}$, $i = 1, \dots, n$, and set $\xi = (\xi_i)_{i=1}^n$. 
We have that 
\begin{eqnarray*}
\left(S_{\omega}(S)\xi,\xi\right) 
& = & 
\sum_{i, j=1}^n (T_{\omega_{i,j} k}\xi_j,\xi_i)\\
& = & 
\sum_{i, j=1}^n \int_{\kappa} \omega_{i,j}(x,y) k(x,y)\xi_j(x)\overline{\xi_i(y)} d(\mu\times\mu)(x,y)\\
& = & \omega\left((D_i^*SD_j)_{i,j=1}^n\right) \geq 0.
\end{eqnarray*}
Since $L^{\infty}(X,\mu)$ is dense in $H$, we have that 
$S_{\omega}(S)\in M_n(\cl B(H))^+$. 

Now relax the assumption that $\omega_{i,j} \in \frak{S}(X,\bb{C})$.
By standard arguments (see e.g. the proof of \cite[Lemma 3.13]{akt}), 
there exist measurable sets $X_m\subseteq X$ with $X_m\subseteq X_{m+1}$, $m\in \bb{N}$, such that 
$\mu(X\setminus X_m) \to_{m\to\infty} 0$ and the restriction $\omega^{(m)}_{i,j}$ of $\omega_{i,j}$ to $X_m \times X_m$
belongs to $\frak{S}(X_m,\bb{C})$ for all $m\in \bb{N}$.
Let $\omega^{(m)} : X\times X \to M_n$ be the function given by $\omega^{(m)}(x,y) = (\omega_{i,j}^{(m)}(x,y))_{i,j}$
if $(x,y)\in X_m\times X_m$ and $\omega^{(m)}(x,y) = 0$ otherwise,
and note that $\omega^{(m)}$ defines a functional on $M_n(\cl K(H))$ in the natural way
(which will be denoted by the same symbol). 
Let $P_m$ be the projection from $H$ onto $L^2(X_m)$. 
We have that 
$$\omega^{(m)}(R) = \omega((P_m\otimes I_n) R (P_m\otimes I_n)), \ \ \ R\in M_n(\cl K(H)).$$
Since $(P_m\otimes I_n) R (P_m\otimes I_n) \to_{m\to \infty} R$ in norm, for every $R\in M_n(\cl K(H))$, 
we have that 
(\ref{eq_omnew}) eventually holds true for $\omega^{(m)}$ in the place of $\omega$.
By the previous paragraph, $\omega^{(m)}$ is a Schur multiplier 
for which $S_{\omega^{(m)}}$ is strictly positive, but not strictly completely positive. 

Finally, relax the assumption that $\mu$ be finite. 
Let $(X_m)_{m\in \bb{N}}$ be an increasing sequence of sets of finite measure such that $\cup_{m=1}^{\infty} X_m = X$, 
and let $Q_m$ be the projection from $H$ onto $L^2(X_m)$, $m\in \bb{N}$. 
Let $T\in M_n(\cl S(\kappa))^{++}$. Since $T$ is a positive operator of finite rank, 
$(Q_mTQ_m)_{m\in \bb{N}}$
is a sequence of positive finite rank operators, converging to $T$ in norm. By the first part of the proof,
$Q_mTQ_m$ is a norm limit of 
operators of the form $(D_i S D_j^*)_{i,j}$, where $(D_i)_{i=1}^n\subseteq \cl D$ and $S\in \cl S(\kappa)^{++}$.
The conclusion follows.

(ii)$\Rightarrow$(i) 
Let $\nph : \kappa\to \cl B(K)$ be a Schur multiplier such that $S_{\nph} : \cl S(\kappa)\to \cl B(H\otimes K)$
is strictly positive. 
It follows from the assumption and fact that $S_{\nph}$ is a $\cl D$-bimodule map that 
$S_{\nph}^{(n)}(T)$ is positive whenever $T\in M_n(\cl S(\kappa))^{++}$. 
\end{proof}

\begin{definition}
Let $\kappa$ be a positivity domain. We call $\kappa$ \emph{rich} if
$$M_n(\cl S(\kappa))^{+} = \overline{M_n(\cl S(\kappa))^{++}}^{w^*} \ \ \mbox{ for every } n\in \bb{N}.$$
\end{definition}

Suppose that $X$ is a countable set equipped with counting measure. 
In this case, positivity domains can be identified with undirected graphs 
with vertex set $X$ in the natural way. This identification will be made in the subsequent remark and in 
Theorem \ref{c_ch}.

\begin{remark}\label{r_disr}
Let $X$ be a countable set. Then any graph $\kappa\subseteq X \times X$ is rich. 
\end{remark}

\begin{proof}
For $X = \bb{N}$, write $Q_m$ for the projection onto the span of $\{e_i\}_{i=1}^m$, $m\in \bb{N}$, 
where $\{e_i\}_{i\in \bb{N}}$ is the standard basis of $\ell^2$.
If $T\in M_n(\cl S(\kappa))^{+}$ then $((Q_m\otimes I_n) T (Q_m\otimes I_n))_{m\in \bb{N}}$ is a sequence
in $M_n(\cl S_2(\kappa))^{++}$, converging in the weak* topology to $T$. 
\end{proof}

By Proposition \ref{p_cpext}, if a Schur multiplier $\nph : \kappa\to \cl B(K)$ has a positive extension then 
the map $S_{\nph} : \cl S(\kappa) \to \cl B(H\otimes K)$ is necessarily positive. We call
$\nph$ \emph{admissible} 
if $S_{\nph}$ is a positive map.
The main result of this section is a characterisation of when an admissible 
Schur multiplier has a positive extension, in terms of the maximal operator $\cl D$-system structure 
defined in Section \ref{s_deoass}.
Note that $\cl S(\kappa)$ is a dual AOU $\cl D$-space in the natural fashion.

\begin{theorem}\label{th_rich}
Let $\kappa\subseteq X\times X$ be a rich positivity domain. The following are equivalent:

(i) \ for every separable Hilbert space $K$, every admissible Schur multiplier $\nph : \kappa \to \cl B(K)$ 
has a positive extension;

(ii) $\cl S(\kappa) = \omax_{\cl D}^{w^*}(\cl S(\kappa))$.
\end{theorem}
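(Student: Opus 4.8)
The plan is to prove the two implications separately, deriving (i) from (ii) via the universal property of the maximal dual operator $\cl D$-system structure, and deriving (ii) from (i) by combining Theorem \ref{th_opsysext} with the richness hypothesis. For the implication (ii)$\Rightarrow$(i), let $\nph : \kappa \to \cl B(K)$ be an admissible Schur multiplier. By Proposition \ref{p_chsch}(iv) the associated map $S_{\nph} : \cl S(\kappa) \to \cl B(H\otimes K)$ is weak* continuous, and it is a $\cl D$-bimodule map, while admissibility means it is positive. Since $\cl B(H\otimes K)$ is a dual operator $\cl D$-system under the action $A\cdot T = (A\otimes I)T$, $T\cdot A = T(A\otimes I)$, the map $S_{\nph}$ is a normal positive $\cl D$-bimodule map. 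Assuming $\cl S(\kappa) = \omax_{\cl D}^{w^*}(\cl S(\kappa))$, Theorem \ref{th_nacp}(i) shows that $S_{\nph}$ is completely positive, and then Proposition \ref{p_cpext} yields a positive extension of $\nph$.

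For the implication (i)$\Rightarrow$(ii), I would first use richness to identify the hypothesis with condition (i) of Theorem \ref{th_opsysext}. Any Schur multiplier yields a weak* continuous map $S_{\nph}$, hence weak* continuous amplifications $S_{\nph}^{(n)}$; since $\cl B(H\otimes K)^+$ and $M_n(\cl B(H\otimes K))^+$ are weak* closed, the identity $M_n(\cl S(\kappa))^+ = \overline{M_n(\cl S(\kappa))^{++}}^{w^*}$ forces $S_{\nph}$ to be positive exactly when it is strictly positive, and completely positive exactly when it is strictly completely positive. Via Proposition \ref{p_cpext}, the statement ``every admissible $\nph$ has a positive extension'' thus becomes ``every strictly positive Schur multiplier is strictly completely positive'', which is condition (i) of Theorem \ref{th_opsysext}. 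Applying that theorem produces its condition (ii): every positive finite rank operator in $M_n(\cl S(\kappa))$ is a norm limit of sums of operators $(D_i S D_j^*)_{i,j}$ with $D_i\in\cl D$ and $S\in\cl S(\kappa)^{++}$.

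The concluding step is to read the cone equality off this approximation. Each such operator equals $A^*\cdot S\cdot A$ with $A = (D_1^*,\dots,D_n^*)\in M_{1,n}(\cl D)$ and $S\in\cl S(\kappa)^+$, hence lies in $D_n^{\max}(\cl S(\kappa);\cl D)$ by Proposition \ref{minmax}. As $D_n^{\max}(\cl S(\kappa);\cl D)$ is a cone contained in $W_n^{\max}(\cl S(\kappa);\cl D)$, and the latter is weak* closed and therefore norm closed, every positive finite rank operator lies in $W_n^{\max}(\cl S(\kappa);\cl D)$, i.e.\ $M_n(\cl S(\kappa))^{++}\subseteq W_n^{\max}(\cl S(\kappa);\cl D)$. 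Taking weak* closures and invoking richness gives $M_n(\cl S(\kappa))^+\subseteq W_n^{\max}(\cl S(\kappa);\cl D)$, while the reverse inclusion holds for any dual operator $\cl D$-system structure by Theorem \ref{th_wmaxa}. Hence the canonical and maximal dual operator $\cl D$-system cones agree for every $n$, which is exactly $\cl S(\kappa)=\omax_{\cl D}^{w^*}(\cl S(\kappa))$.

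The main obstacle is the passage, inside (i)$\Rightarrow$(ii), between the finite-rank (``strict'') positivity notions on which Theorem \ref{th_opsysext} is built and the global positivity notions governing the extension problem; this is precisely where richness is indispensable, since it permits weak* approximation of arbitrary positive matrices over $\cl S(\kappa)$ by finite-rank ones and so lets the weak* continuity of $S_{\nph}^{(n)}$ upgrade strict (complete) positivity to (complete) positivity. The secondary but conceptually central point is recognising that the explicit approximation furnished by Theorem \ref{th_opsysext}(ii) takes place entirely within the cone $D_n^{\max}(\cl S(\kappa);\cl D)$, whose weak* closure defines $W_n^{\max}(\cl S(\kappa);\cl D)$.
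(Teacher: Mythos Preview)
Your proof is correct and follows essentially the same strategy as the paper's. Both directions invoke the same results (Theorem~\ref{th_nacp} and Proposition~\ref{p_cpext} for (ii)$\Rightarrow$(i); Theorem~\ref{th_opsysext} together with richness for (i)$\Rightarrow$(ii)), and the logical flow is identical. The only cosmetic difference is that in the last step of (i)$\Rightarrow$(ii) the paper first lands the finite-rank positive matrices in $C_n^{\max}(\cl S(\kappa);\cl D)$ (using that operator-system cones are norm closed) and then takes the weak* closure, whereas you pass directly to $W_n^{\max}(\cl S(\kappa);\cl D)$ by noting it is weak* closed and hence norm closed; this is a harmless shortcut.
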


\begin{proof}
(i)$\Rightarrow$(ii) 
Let $\nph : \kappa \to \cl B(K)$ be a strictly positive Schur multiplier. 
Since $\cl S(\kappa)^+ = \overline{\cl S(\kappa)^{++}}^{w^*}$ and $S_{\nph}$ is weak* continuous, 
$S_{\nph}$ is positive.
By the assumption and Proposition \ref{p_cpext}, $S_{\nph}$ is completely positive. 
In particular, $S_{\nph}$ is strictly completely positive. 
By Theorem \ref{th_opsysext} and the fact that the matricial cones of any operator system are 
norm closed, we have that 
\begin{equation}\label{eq_mn++}
M_n(\cl S(\kappa))^{++} \subseteq M_n(\omax\mbox{}_{\cl D}(\cl S(\kappa)))^+.
\end{equation}
Since $\kappa$ is rich, by taking weak* closures on both sides in (\ref{eq_mn++}) 
we obtain that 
\begin{equation}\label{eq_mn+}
M_n(\cl S(\kappa))^{+} \subseteq M_n(\omax\mbox{}_{\cl D}^{w^*}(\cl S(\kappa)))^+.
\end{equation}
Since the converse inclusion in (\ref{eq_mn+}) always holds, we conclude that 
$\cl S(\kappa) = \omax_{\cl D}^{w^*}(\cl S(\kappa))$.

(ii)$\Rightarrow$(i) follows from Theorem \ref{th_nacp} and Proposition \ref{p_cpext}.
\end{proof}

Theorem \ref{th_rich} and Remark \ref{r_disr} have the following immediate corollary. 
In the case where $X$ is finite, it is a reformulation, in terms of operator system structures,
of \cite[Theorem 4.6]{pps}.

\begin{corollary}\label{c_disc}
Let $X$ be a countable set, equipped with counting measure and $\kappa\subseteq X\times X$ be a 
symmetric set containing the diagonal. The following are equivalent:

(i) \ for every Hilbert space $K$, every admissible Schur multiplier $\nph : \kappa\to \cl B(K)$
has a positive extension;

(ii) $\cl S(\kappa) = \omax_{\cl D}^{w^*}(\cl S(\kappa))$.
\end{corollary}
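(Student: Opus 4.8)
The plan is to obtain Corollary \ref{c_disc} as a direct specialisation of Theorem \ref{th_rich}, using Remark \ref{r_disr} to supply the richness hypothesis. The only real work is to check that, when $X$ is countable and equipped with counting measure, the hypotheses assumed in the Corollary force $\kappa$ to be a \emph{rich positivity domain}; once this is in place, Theorem \ref{th_rich} applies verbatim and delivers the equivalence of (i) and (ii).

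First I would note that for counting measure the only null set is the empty set, so a marginally null subset of $X\times X$ is empty. Consequently marginal containment and marginal equivalence collapse to ordinary set containment and equality. In particular, the assumed symmetry of $\kappa$ means $\kappa=\hat\kappa$ honestly, and the assumption that $\kappa$ contains the diagonal means $\Delta\subseteq\kappa$ in the literal sense.

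Next I would verify that $\kappa$ is generated by rectangles, which is the one condition from the definition of a positivity domain not stated explicitly in the Corollary. Since every singleton $\{(x,y)\}$ is the rectangle $\{x\}\times\{y\}$ and $X\times X$ is countable, every subset of $X\times X$ is a countable union of rectangles, hence $\omega$-open; therefore $\ointer(\kappa)\cong\kappa$ and $\ocl(\ointer(\kappa))\cong\kappa$, so $\kappa$ is generated by rectangles. Combined with the previous paragraph, this shows that $\kappa$ is a positivity domain in the sense of Section \ref{s_pe}, and then Remark \ref{r_disr} guarantees that it is rich.

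Finally, all hypotheses of Theorem \ref{th_rich} being met, the equivalence of (i) and (ii) follows immediately; one only observes that $H=\ell^2(X)$ is separable because $X$ is countable, so the quantification over separable $K$ in Theorem \ref{th_rich} matches the quantification in the Corollary. I do not expect any genuine obstacle here, as no new estimate is required: the entire substance is carried by Theorem \ref{th_rich}, and what remains is purely the bookkeeping of how the measure-theoretic notions degenerate in the discrete setting, where positivity domains coincide with undirected graphs having a loop at every vertex.
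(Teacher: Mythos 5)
Your proposal is correct and follows exactly the paper's route: the paper obtains this corollary as an immediate consequence of Theorem \ref{th_rich} together with Remark \ref{r_disr}, which is precisely your argument, with the useful extra bookkeeping that under counting measure marginal notions collapse to literal ones and every subset of $X\times X$ is generated by rectangles, so the hypotheses of the Corollary do yield a rich positivity domain. One minor blemish: your closing separability remark conflates $H$ and $K$ --- what is at stake in matching the quantifiers is separability of the coefficient space $K$ (which the paper's conventions in Section \ref{s_ism} fix to be separable throughout), not of $H=\ell^2(X)$; this does not affect the substance of the proof.
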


Let $X$ be a countable set. Recall that a graph $\kappa\subseteq X\times X$ is called chordal 
if every 4-cycle in $\kappa$ has an edge connecting two non-consecutive vertices of the cycle 
(see e.g. \cite{pps}).

\begin{theorem}\label{c_ch}
Let $X$ be a countable set and $\kappa\subseteq X\times X$ be a chordal graph. 
Then $\cl S(\kappa) = \omax_{\cl D}^{w^*}(\cl S(\kappa))$.
\end{theorem}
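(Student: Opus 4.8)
The plan is to establish the nontrivial inclusion $M_n(\cl S(\kappa))^+\subseteq W_n^{\max}(\cl S(\kappa);\cl D)$ for each $n\in\bb{N}$. Since $\cl S(\kappa)$, carrying its canonical cones, is a dual operator $\cl D$-system structure, Theorem~\ref{th_wmaxa} supplies the reverse inclusion $W_n^{\max}(\cl S(\kappa);\cl D)\subseteq M_n(\cl S(\kappa))^+$ automatically; together these give $M_n(\cl S(\kappa))^+ = M_n(\omax_{\cl D}^{w^*}(\cl S(\kappa)))^+$ for all $n$, that is, $\cl S(\kappa)=\omax_{\cl D}^{w^*}(\cl S(\kappa))$. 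The input driving the inclusion is the finite case. I would fix an increasing exhaustion $X=\bigcup_m X_m$ by finite sets and put $\kappa_m=\kappa\cap(X_m\times X_m)$. As chordality passes to induced subgraphs, each $\kappa_m$ is a finite chordal graph, so by \cite[Theorem 4.6]{pps} it enjoys the positive extension property; the finite instance of Corollary~\ref{c_disc} then yields $\cl S(\kappa_m)=\omax_{\cl D_m}^{w^*}(\cl S(\kappa_m))$, where $\cl D_m$ is the diagonal algebra on $\ell^2(X_m)$. As $\omax_{\cl D_m}^{w^*}$ and $\omax_{\cl D_m}$ coincide in finite dimensions, this reads $M_n(\cl S(\kappa_m))^+ = C_n^{\max}(\cl S(\kappa_m);\cl D_m)$ for every $n$.

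Next I would compress. Let $Q_m$ be the orthogonal projection of $H=\ell^2(X)$ onto $\ell^2(X_m)$, and for $T\in M_n(\cl S(\kappa))^+$ set $T_m=(Q_m\otimes I_n)\,T\,(Q_m\otimes I_n)$. Each $T_m$ is a compression of a positive operator whose entries are supported on $\kappa$, hence $T_m\in M_n(\cl S(\kappa_m))^+$, and $T_m\to T$ in the weak* topology by the argument of Remark~\ref{r_disr}. By the finite case, $T_m\in C_n^{\max}(\cl S(\kappa_m);\cl D_m)$, so for every $r>0$ we have $T_m+r\,(Q_m)_n\in D_n^{\max}(\cl S(\kappa_m);\cl D_m)$, where $(Q_m)_n=\diag(Q_m,\dots,Q_m)$ is the order unit of $M_n(\cl S(\kappa_m))$.

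The crux is to promote this to membership in the ambient maximal cone. Because $\cl S(\kappa_m)^+\subseteq\cl S(\kappa)^+$ and $\cl D_m\subseteq\cl D$ compatibly with the module actions, a representing sum $\sum_i A_i^*\cdot x_i\cdot A_i$ witnessing $T_m+r(Q_m)_n\in D_n^{\max}(\cl S(\kappa_m);\cl D_m)$ also witnesses $T_m+r(Q_m)_n\in D_n^{\max}(\cl S(\kappa);\cl D)$. The order units disagree, however: the ambient unit is $e_n$ with $e=I$, whereas the finite piece carries $(Q_m)_n$. To reconcile them, observe that $e_n-(Q_m)_n=\diag(Q_m^{\perp},\dots,Q_m^{\perp})$ with $Q_m^{\perp}=I-Q_m\in\cl S(\kappa)^+$ (a diagonal positive operator, hence supported on $\Delta\subseteq_{\omega}\kappa$), so $e_n-(Q_m)_n\in D_n^{\max}(\cl S(\kappa);\cl D)$ by Proposition~\ref{minmax}. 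Adding $r\,(e_n-(Q_m)_n)$ gives $T_m+re_n\in D_n^{\max}(\cl S(\kappa);\cl D)$ for all $r>0$, that is, $T_m\in C_n^{\max}(\cl S(\kappa);\cl D)\subseteq W_n^{\max}(\cl S(\kappa);\cl D)$. As $W_n^{\max}(\cl S(\kappa);\cl D)$ is weak* closed and $T_m\to T$ weak*, I conclude $T\in W_n^{\max}(\cl S(\kappa);\cl D)$, which is the desired inclusion.

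I expect the order-unit reconciliation of the third paragraph to be the main obstacle: the finite subsystems $\cl S(\kappa_m)$ sit inside $\cl S(\kappa)$ as non-unital corners, their units being the projections $Q_m$ rather than $I$, so membership in the finite maximal cone does not transfer verbatim to the ambient one. Absorbing the positive diagonal defect $e_n-(Q_m)_n$ into the $D_n^{\max}$-sum is the device that bridges this gap and permits a clean passage to the weak* limit; the supporting facts --- that the compressions $T_m$ are genuinely supported on $\kappa_m$, that $T_m\to T$ weak*, and that chordality is hereditary --- are routine.
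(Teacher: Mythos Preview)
Your argument is correct, but it follows a genuinely different route from the paper's.

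The paper inflates $\kappa$ to $\kappa^{(n)}\subseteq (X\times[n])^2$, observes that $\kappa^{(n)}$ is again chordal, and invokes \cite[Theorem~2.5]{llt} to write every element of $M_n(\cl S(\kappa))^+\cong\cl S(\kappa^{(n)})^+$ as a weak* limit of \emph{rank-one} positive operators. A rank-one positive operator is supported on a set $\alpha\times\alpha$ with $\alpha\times\alpha\subseteq\kappa^{(n)}$; projecting to $X$ yields a clique $\beta\times\beta\subseteq\kappa$, and on a full rectangle any positive Schur multiplier is automatically completely positive (Theorem~\ref{th_modc}). Hence every admissible $\nph:\kappa\to\cl B(K)$ is completely positive, so by Proposition~\ref{p_cpext} and Corollary~\ref{c_disc} the conclusion follows.

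Your approach bypasses the rank-one density result from \cite{llt} entirely: you reduce to the \emph{finite} chordal case via truncation, quote \cite{pps} there, and then work directly at the level of the cones $D_n^{\max}$ and $C_n^{\max}$, using the absorption of the diagonal defect $e_n-(Q_m)_n$ to pass from the non-unital corner $\cl S(\kappa_m)$ to the ambient system before taking the weak* limit. This is a clean ``approximate by finite pieces'' argument that needs only the classical \cite{pps} input and the cone calculus of Sections~\ref{s_eoss} and~\ref{s_deoass}. The paper's route, by contrast, establishes along the way the stronger operational statement that $\kappa$ itself has the positive extension property, and makes the geometric role of chordality (rank-one supports sit on cliques) more explicit. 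Your unit-reconciliation device is the one nontrivial step, and it is handled correctly.
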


\begin{proof}
Fix $n\in \bb{N}$ and let $[n] = \{1,\dots,n\}$.
Suppose that $\kappa \subseteq X\times X$ is a chordal graph. 
Let 
$$\kappa^{(n)} = \left\{((x,i),(y,j)) \in \left(X\times [n]\right)\times \left(X\times [n]\right) : (x,y)\in \kappa\right\}.$$
Then $\kappa^{(n)}$ is a chordal graph on $X\times [n]$. 
By \cite[Theorem 2.5]{llt}, every positive operator in $M_n(\cl S(\kappa))$ is a weak* limit 
of rank one positive operators in $M_n(\cl S(\kappa))$. 

Suppose that $K$ is a Hilbert space and $\nph : \kappa \to \cl B(K)$ is a Schur multiplier such that $S_{\nph} : \cl S(\kappa)\to \cl B(H \otimes K)$
is a positive map. 
Let $R\in M_n(\cl S(\kappa))$ be a positive rank one operator. 
After identifying $M_n(\cl S(\kappa))$ with $\cl S(\kappa^{(n)})$, 
we see that there exists a subset $\alpha \subseteq X\times [n]$ such that 
$R$ is supported on $\alpha\times\alpha$. 
Let 
$$\beta = \{x\in X : \exists \ i\in [n] \mbox{ with } (x,i)\in \alpha\}.$$
Since $\alpha\times\alpha \subseteq \kappa^{(n)}$, we have that $\beta\times\beta \subseteq \kappa$.
Setting $\tilde{\beta} = \beta\times [n]$, we have that
$\alpha\subseteq \tilde{\beta}$, and hence $R$ is supported on 
$\tilde{\beta} \times \tilde{\beta}$. 
The restriction $\psi$ of $\nph$ to $\beta\times\beta$ is a positive Schur 
multiplier. By Theorem \ref{th_modc}, the map $S_{\psi} : \cl S(\beta\times\beta)\to \cl B(H\otimes K)$ is
completely positive.
Thus, $S_{\nph}^{(n)}(R) = S_{\psi}^{(n)}(R) \in \cl B(H\otimes K)^+$.
Since $S_{\nph}$ is weak* continuous, the previous paragraph implies that $S_{\nph}$ is completely positive.
By Proposition \ref{p_cpext}, $\nph$ has a positive extension and, by Corollary \ref{c_disc}, 
$\cl S(\kappa) = \omax_{\cl D}^{w^*}(\cl S(\kappa))$.
\end{proof}


\end{document}